\numberwithin{equation}{section}
\newtheorem{theorem}{Theorem}[section]
\newtheorem{proposition}[theorem]{Proposition}
\theoremstyle{definition}
\newtheorem{definition}[theorem]{Definition}
\theoremstyle{plain}
\newtheorem{lemma}[theorem]{Lemma}
\newtheorem{corollary}{Corollary}[theorem]
\theoremstyle{remark}
\newtheorem{remark}[theorem]{Remark}
\title[Level set estimate]{A note on estimates of level sets and their role in demonstrating regularity of solutions to nonlocal double phase equations}
\thanks{Support from NSF  DMS-1615726 is gratefully acknowledged.}
 \author{James M. Scott and Tadele Mengesha}
\address[James M. Scott]{Department of Mathematics,
University of Pittsburgh}
\address[Tadele Mengesha]{Department of Mathematics,
University of Tennessee Knoxville, mengesha@utk.edu}
\newcommand{\Kone}{\pi_1 \cK}
\newcommand{\Ktwo}{\pi_2 \cK}
\newcommand{\Kh}{\pi_h \cK}
\newcommand{\Hlambda}{ {\color{black} \cH_{\lambda}} }
\newcommand{\Hdlambda}{ {\color{black} \cH^d_{\lambda}} }
\newcommand{\Hndlambda}{ {\color{black} \cH^{nd}_{\lambda}} }
\newcommand{\GoodLambda}{ {\color{black} \mathscr{G}_{\lambda}} }
\newcommand{\BadLambdaD}{ {\color{black} \mathscr{B}_{\lambda, d}} }
\newcommand{\BadLambdaND}{ {\color{black} \mathscr{B}_{\lambda, nd}} }
\newcommand{\EquationReference}[2]{\mathrel{\overset{\makebox[0pt]{\mbox{\normalfont\tiny\sffamily #1}}}{#2}}}
\begin{document}

\maketitle

\begin{abstract}
	In this note we prove an estimate on the level sets of a function with $(p, q)$ growth that depends on the difference quotient of a bounded weak solution to a nonlocal double phase equation. This estimate is related to a self improving property of these solutions.
\end{abstract}

 \section{Introduction and Main Results}
 
This note is a companion to the article \cite{scott2020self}, in which the authors study regularity properties of weak solutions $u$ to 
 \begin{equation}\label{eq:Intro:MainEqn}
 	\cL u (x) = f(x)\,,
 \end{equation}
 where for measurable functions $u: \bbR^n \to \bbR$ and for $x \in \bbR^n$ the nonlocal \textit{double phase} operator $\cL$ is defined as
 \begin{equation*}
 	\cL u(x) := \pv \intdm{\bbR^n}{\frac{|u(x)-u(y)|^{p-2}}{|x-y|^{n+sp}} (u(x)-u(y)) + a(x,y) \frac{|u(x)-u(y)|^{q-2}}{|x-y|^{n+tq}} (u(x)-u(y))}{y}\,.
 \end{equation*}
 Throughout, we assume $n \geq 2$ and the integrability indices $p$, $q$ belong to $(1,\infty)$ with $p \leq q$ and differentiability indices $s$, $t$ belong to $(0,1)$. The abbreviation $\pv$ stands for principal value.
 The operator $\cL$ is the archetype of a class of nonlocal double phase operators first introduced in \cite{de2019holder}, in which the H\"older continuity of bounded viscosity solutions to $\cL u = f$ with bounded data $f$ was obtained. 
 In the work \cite{scott2020self} the authors demonstrate regularity of solutions on a different scale; that under suitable assumptions on the data $f$, the modulating coefficient $a(\cdot,\cdot)$, and a certain ratio of integrability and differentiability exponents solutions $u$ to $\cL u = f$ exhibit a self-improvement property. Precisely, distributional solutions $u$ belonging to the fractional Sobolev space $W^{s,p}(\bbR^n)$ in fact belong to a Sobolev space with higher exponents of integrability and differentiability.
 
 For ease of reference,  we summarize the relevant definitions, assumptions, properties and results found in \cite{scott2020self}.
 We assume that the modulating coefficient $a$ is measurable, and satisfies
 \begin{equation}\label{Assumption:Intro:Coeff}
 	a(x,y) \in L^{\infty}(\bbR^{2n})\,, \qquad 0 \leq a(x,y) \leq M\,, \qquad a(x,y) = a(y,x)\,.
 	\tag{A1}
 \end{equation}
 We also require that 
 \begin{equation}\label{Assumption:Intro:Exp}
 	p \leq q\,, \quad t \leq s\,, \qquad \frac{1}{p'} \leq \frac{tq}{sp} \leq 1\,, 
 	\tag{A2}
 \end{equation}
 where $p'$ is the H\"older conjugate of $p$: ${1\over p}+ {1\over p'}=1.$ 
 Additionally we will restrict ourselves to the case
 \begin{equation}\label{Assumption:Intro:Exp2}
 	s p < n\,.
 	\tag{A3}
 \end{equation}

  The higher differentiability and integrability described above and in \cite{scott2020self} applies to bounded solutions $u \in W^{s,p}(\bbR^n)$ of a weak formulation of the equation \eqref{eq:Intro:MainEqn}, that is
 \begin{equation}\label{eq:Intro:WeakFormulation}
 	\cE(u,\varphi) = \intdm{\bbR^n}{f(x) \varphi(x)}{x}\,, \qquad \text{ for any } \varphi \in C^{\infty}_c(\bbR^n)\,,
 \end{equation}
 where the form $\cE(u,\varphi)$ is defined as
 \begin{equation}\label{eq:Intro:WeakFormDefn}
 	\begin{split}
 		\cE(u,\varphi):= \iintdm{\bbR^n}{\bbR^n}{&\frac{|u(x)-u(y)|^{p-2}}{|x-y|^{n+sp}} (u(x)-u(y)) (\varphi(x)-\varphi(y)) \\
 			&\quad + a(x,y) \frac{|u(x)-u(y)|^{q-2}}{|x-y|^{n+tq}} (u(x)-u(y))(\varphi(x)-\varphi(y))}{y}{x}\,.
 	\end{split}
 \end{equation}
 %
 %
 %
 %
 %
 %
 We assume the data $f$ belongs to a Lebesgue space with sufficiently high exponent. Precisely, for a given $\delta_0 > 0$
 \begin{equation*}
 	f \in L^{p_{*_s}+\delta_0}_{loc}(\bbR^n)\,,
 \end{equation*}
 where we are using standard notation for H\"older and Sobolev exponents; that is, for any $r \in (1,\infty)$ and any $\sigma \in (0,1)$ we write
 \begin{equation*}
 	r' = \frac{r}{r-1}\,, \qquad r^{*} = r^{*_{\sigma}} = \frac{n r}{n-\sigma r}\,, \qquad r_* = r_{*_{\sigma}} = \frac{n r'}{n+\sigma r'} = (r^*)'\,.
 \end{equation*}
 (The dependence of the embedding exponents on $\sigma$ will be suppressed whenever it is clear from context.) 
 
 
 Weak solutions $u$ are assumed to be \textit{a priori} bounded, a point clarified by the following definition:
 \begin{definition}
 	A function $u \in W^{s,p}(\bbR^n) \cap L^{\infty}(\bbR^n)$ is a \textit{bounded weak solution} to \eqref{eq:Intro:MainEqn} with data $f$ if the nonlocal double phase energy $\cE(u,u) < \infty$ and if $u$ satisfies \eqref{eq:Intro:WeakFormulation}.
 \end{definition}
  If we denote the integrand of $\cE(u,u)$ by $P(x,y,u)$ so that 
  \begin{equation*}
 	\cE(u,u) = \iintdm{\bbR^n}{\bbR^n}{P(x,y,u)}{y}{x}\,,
 \end{equation*}
 then by definition of $u$ as a bounded weak solution the function $P(\cdot,\cdot,u)$ belongs to $L^1(\bbR^{2n})$.
 The following theorem concerning $P$ constitutes the main result of \cite{scott2020self}:
 
 \begin{theorem}\label{thm:Intro:MainThm}
 	Let $p$, $q \geq 2$ and $s$, $t \in (0,1)$ satisfy \eqref{Assumption:Intro:Exp}-\eqref{Assumption:Intro:Exp2} and let $a(x,y)$ satisfy \eqref{Assumption:Intro:Coeff}. Fix $\delta_0 > 0$, and let $f \in L^{p_{*_s} + \delta_0}_{loc}(\bbR^n)$. Let $u \in W^{s,p}(\bbR^n) \cap L^{\infty}(\bbR^n)$ be any bounded weak solution to \eqref{eq:Intro:MainEqn} with data $f$. Then there exists $\veps_0 \in (0,1)$ depending only on $n$, $p$, $q$, $s$, $t$, $M$, $\delta_0$ and $\Vnorm{u}_{L^{\infty}(\bbR^n)}$ such that for every $\tau \in (0,\veps_0)$
 	\begin{equation*}
 		P(\cdot,\cdot,u) \in L^{1+\tau}_{loc}(\bbR^{2n})\,.
 	\end{equation*}
 	In particular, there exist positive constants $\veps_1$ and $\veps_2$ such that $u \in W^{s+\veps_1, p+\veps_2}_{loc}(\bbR^n)$, and if $(s+\veps_1)(p+\veps_2) > n$ then $u$ is locally H\"older continuous.
 \end{theorem}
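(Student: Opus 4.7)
The plan is to establish the improved integrability $P(\cdot,\cdot,u) \in L^{1+\tau}_{loc}(\bbR^{2n})$ via a Gehring-type self-improvement argument built on a careful level set estimate, and then to deduce the higher fractional Sobolev regularity and H\"older continuity of $u$ by standard embedding arguments.

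\textbf{Step 1: Difference-quotient Caccioppoli inequality.} I would first test the weak formulation \eqref{eq:Intro:WeakFormulation} with a function of the form $\varphi = \Delta_{-h}\bigl(\eta^{q}\,\Delta_{h} u\bigr)$, where $\Delta_{h}u(x):=u(x+h)-u(x)$ is a discrete difference quotient and $\eta$ is a smooth cutoff supported in a ball $B_{2R}$. Using monotonicity of the $(p-2)$- and $(q-2)$-power nonlinearities together with the a priori $L^\infty$ bound on $u$, this yields a localized estimate controlling the double phase energy of $\Delta_{h}u$ by a sum of terms built from $P(x,y,u)$ on $B_{2R}\times B_{2R}$, the coefficient $a$, and the data $f$.

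\textbf{Step 2: Level-set decomposition and main estimate.} For each $\lambda>0$ I would consider the superlevel set
\[
\Hlambda = \{(x,y)\in B_R\times B_R : P(x,y,u) > \lambda\},
\]
split as $\Hlambda = \Hdlambda \cup \Hndlambda$ according to whether $|x-y|$ is small or large relative to a chosen reference scale, and further split each piece into a ``good'' subset $\GoodLambda$ on which the difference quotient $\Delta_{h}u$ is controlled by universal constants and ``bad'' subsets $\BadLambdaD\cup\BadLambdaND$ on which it is not. The main level set estimate of the present note then provides a bound of the schematic form
\[
\iint_{\Hlambda} P(x,y,u)\,dy\,dx \;\leq\; C \iint_{\{P>c\lambda\}} P(x,y,u)^{\theta}\,dy\,dx \;+\; (\text{data contributions}),
\]
for some $\theta<1$ and constants $C,c$ depending on the structural parameters. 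A layer-cake integration against $\lambda^{\tau-1}\,d\lambda$ converts this into a reverse H\"older-type inequality for $P(\cdot,\cdot,u)$ on product balls, after which a Gehring-type self-improvement lemma supplies $\veps_0>0$ such that $P(\cdot,\cdot,u) \in L^{1+\tau}_{loc}(\bbR^{2n})$ for every $\tau\in(0,\veps_0)$, with dependence of $\veps_0$ on $n,p,q,s,t,M,\delta_0,\|u\|_{L^{\infty}(\bbR^n)}$ inherited from Step 1 and the level set estimate.

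\textbf{Step 3: Consequences for $u$.} Since the $p$-phase density $|u(x)-u(y)|^p\,|x-y|^{-n-sp}$ is pointwise dominated by $P(x,y,u)$, H\"older's inequality in the product variables promotes $P\in L^{1+\tau}_{loc}$ to $u\in W^{s+\veps_1,\,p+\veps_2}_{loc}(\bbR^n)$ for suitable $\veps_1,\veps_2>0$; a small loss is needed here to reabsorb the factor $|x-y|^{-n}$ into the improved integrability. Finally, whenever $(s+\veps_1)(p+\veps_2)>n$ the fractional Morrey embedding yields local H\"older continuity of $u$.

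\textbf{Main obstacle.} The principal difficulty lies in the level set estimate of Step 2: the $(p,q)$-growth of $P$ forces one to track both phases simultaneously, since on the region where $a(x,y)$ is small only the $p$-phase is available to absorb error terms, whereas where $a$ is large both phases contribute but at different scales. The ratio condition $\tfrac{1}{p'}\leq \tfrac{tq}{sp}\leq 1$ from \eqref{Assumption:Intro:Exp} is precisely what renders the two contributions comparable and allows the good/bad decomposition above to close. A secondary difficulty, intrinsic to the nonlocal framework, is the control of the far-field tail contributions from $\bbR^n\setminus B_{2R}$, which must be absorbed through the a priori $L^\infty$ bound on $u$ and the integrability of $1/|h|^{n+sp}$ away from $h=0$.
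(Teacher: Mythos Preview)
Your overall architecture (reverse H\"older $\Rightarrow$ Gehring self-improvement $\Rightarrow$ higher fractional Sobolev regularity $\Rightarrow$ Morrey embedding) matches the paper's, but your Step~1 and the core mechanism of Step~2 diverge from the paper in a way that introduces a genuine gap.

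\textbf{The difference-quotient approach will not close under the assumptions.} Testing with $\varphi=\Delta_{-h}(\eta^{q}\Delta_{h}u)$ generates terms comparing $a(x+h,y+h)$ with $a(x,y)$, and under \eqref{Assumption:Intro:Coeff} the coefficient $a$ is merely bounded and measurable, with no modulus of continuity. There is no way to control these commutator terms, so the Caccioppoli-type estimate you describe cannot be obtained in this setting. The paper (together with its companion) does \emph{not} use difference quotients at all; instead it passes to the dual pair $(U,\nu)$ with $U(x,y)=|u(x)-u(y)|/|x-y|^{s+\veps}$ and the weighted measure $\nu$ of \eqref{eq:MeasureDefn}, rewrites the energy as $G(x,y,U)=U^{p}+A(x,y)U^{q}$, and proves a reverse H\"older inequality for $H:=G^{(p-1)/p}$ directly from the equation (Corollary~\ref{cor:RevHolderFinal}). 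The boundedness of $a$ enters only multiplicatively through $A$, never through an incremental comparison.

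\textbf{You have misidentified the main obstacle.} The central difficulty is not the balancing of the $p$- and $q$-phases (that is handled by \eqref{Assumption:Intro:Exp} and the $L^{\infty}$ bound on $u$ at the level of the reverse H\"older inequality), but the structural fact that the reverse H\"older inequality \eqref{eq:RevHolderFinal} is available \emph{only on diagonal balls} $\cB(x_0,R)=B_R(x_0)\times B_R(x_0)$. This rules out maximal-function arguments in $\bbR^{2n}$. The level-set decomposition in the paper is therefore not based on the size of $|x-y|$ as you write, but on a Calder\'on--Zygmund decomposition of $\{H>\lambda\}$ into dyadic cubes $\cK=K_1\times K_2\subset\bbR^{2n}$, sorted by the distance of $\cK$ to the diagonal $\{(x,x)\}$: near-diagonal cubes are absorbed into a Vitali covering by diagonal balls where \eqref{eq:RevHolderFinal} applies, while far-from-diagonal cubes satisfy an automatic ``almost reverse H\"older'' inequality (Lemma~\ref{lma:OffDiagRevHolderIneq}) valid for any $W^{s,p}\cap L^{\infty}$ function, followed by a combinatorial counting argument. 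Your good/bad splitting tied to $\Delta_h u$ does not capture this geometry and would not yield Proposition~\ref{prop:LevelSetEstimate}.

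Finally, note that in this note Theorem~\ref{thm:Intro:MainThm} is not proved directly: it is reduced to Theorem~\ref{thm:Setup:MainRegularityResult} via the equivalence $P\in L^{1+\tau}_{loc}\Leftrightarrow G\in L^{1+\tau}_{loc}(\nu)$, and the latter is obtained in the companion paper from the level-set estimate (Proposition~\ref{prop:LevelSetEstimate}) fed into a fractional Gehring lemma. Your Step~3 is essentially correct in spirit.
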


 To prove Theorem \ref{thm:Intro:MainThm} we use an argument developed by Kuusi, Mingione and Sire announced in \cite{kuusi2014fractional} and presented in \cite{kuusi2015} that builds a nonlocal fractional Gehring lemma in order to prove a self-improvement result for solutions to a class of monotone operators with quadratic growth related to the fractional Laplacian.
 The arguments in \cite{scott2020self} and in this note are heavily based on the work and presentation done for the case $p=2$ in \cite{kuusi2015}. While it is apparent from a careful reading of that work that their methods apply to functionals with more general $p$-growth, the precise treatment of such classes of operators does not appear in the literature. Since we are further working with operators of mixed $(p,q)$ growth, in \cite{scott2020self} and in this note we have written the arguments of \cite{kuusi2015} for a general exponent $p$ so that the robustness  of their technique and as well as results can be clearly seen as applicable in a wealth of contexts. One such instances is an extension of these arguments to vector-valued solutions of nonlocal systems. A specific example is the strongly coupled system of nonlinear equations studied in \cite{Mengesha-Scott-Korn-in-Domain}.  
 
  The fractional Gehring lemma relies on a level set estimate of a quantity related to the solution, and its proof is the main contribution of this note.  If a reader of \cite{scott2020self} accepts the level set estimate as true then the rest of the proof in \cite{scott2020self} of the Gehring lemma follows in a straightforward way. However, because the proof of the estimate itself is quite technical and very closely resembles the argument from \cite{kuusi2015}, we have written it here instead of in \cite{scott2020self}.
  
  Due to its technical nature we do not write a statement of the level set estimate precisely until the beginning of Section \ref{sec:LevelSetEstimate}. To ensure there is no interruption in the thread of reasoning between this work and \cite{scott2020self}, the prerequisite results are given in Section \ref{sec:RevHolder} as a summarized version of their counterparts in \cite{scott2020self}. 

 
 Following the structure introduced in \cite{kuusi2015}, we define \textit{dual pairs} of measures and functions $(U,\nu)$. For small $\veps \in (0,1/p)$  we define the locally finite doubling Borel measure in $\bbR^{2n}$
 \begin{equation}\label{eq:MeasureDefn}
 	\nu(A) := \int_{A} \frac{\rmd x \, \rmd y}{|x-y|^{n-\veps p}}\,, \qquad A \subset \bbR^{2n} \text{ measurable}\,,
 \end{equation}
 and we define the function
 \begin{equation}
 	U(x,y) := \frac{|u(x)-u(y)|}{|x-y|^{s+\veps}}\,.
 \end{equation}
 It is then clear that 
 \begin{equation*}
 	u \in W^{s,p}(\bbR^n) \qquad \text{ if and only if } \qquad U \in L^p(\bbR^{2n}; \nu)\,.
 \end{equation*}
 The integrand $P(x,y,u)$ of the energy $\cE(u,u)$ can be expressed in terms of $U$ as 
 \begin{equation}\label{eq:UDefn:A}
 	[U^p + A(x,y) U^q]|x-y|^{-n+\epsilon p},\quad \text{where $A(x,y) := a(x,y) |x-y|^{(s-t)q + \veps(q-p)}$. }
 \end{equation}
 %
 %
 We can therefore write the double phase energy $\cE(u.u)$ in terms the dual pair as 
 \begin{equation}\label{eq:UDefn:Energy}
 	\cE(u,u) = \intdm{\bbR^{2n}}{(U^p + A(x,y) U^q)}{\nu}=:\intdm{\bbR^{2n}}{G(x,y,U)}{\nu} 
 \end{equation}
 where the integrand $G(x,y,U):=U^p + A(x,y) U^q$. 
 Then it now becomes clear that 
 \begin{equation*}
 	P(\cdot,\cdot,u) \in L^1(\bbR^{2n}) \quad \text{ if and only if } \quad G(\cdot,\cdot,U) \in L^1(\bbR^{2n}; \nu)\,.
 \end{equation*}

 \begin{theorem}[Higher Regularity Result]\label{thm:Setup:MainRegularityResult}
 	With all the assumptions of Theorem \ref{thm:Intro:MainThm}, there exists $\veps_0 > 0$ depending only on $\texttt{data}$ such that for every $\delta \in (0,\veps_0)$ we have
 	\begin{equation}\label{eq:Setup:HigherRegForU}
 		G(x,y,U) \in L^{1 + \delta}_{loc}(\bbR^{2d}; \nu)\,.
 	\end{equation}
 	where $\texttt{data} $ represents  $n,p,q,s,t,M,$ and $\Vnorm{u}_{L^{\infty}}$.
 \end{theorem}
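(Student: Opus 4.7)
The plan is to obtain Theorem \ref{thm:Setup:MainRegularityResult} as a consequence of a fractional Gehring-type self-improvement lemma applied to the dual pair $(U,\nu)$. The heart of the argument is a level set estimate for the integrand $G(\cdot,\cdot,U)$; once that estimate—whose statement and proof constitute the content of Section \ref{sec:LevelSetEstimate}—is in hand, the passage to higher integrability proceeds via a standard Cavalieri / layer-cake integration and an absorption argument.

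First I would fix an open set $\Omega \Subset \bbR^{2n}$ in which local higher integrability is to be established, together with a threshold $\lambda_0$ comparable to a weighted $L^1(\widetilde{\Omega};\nu)$ average of $G(\cdot,\cdot,U)$ plus data terms built from $f$ on a slightly enlarged set $\widetilde{\Omega}$. The level set estimate, obtained by a Calder\'on--Zygmund stopping-time decomposition of $\{G>\lambda\}$ adapted to the doubling measure $\nu$ and combined with a Caccioppoli-type inequality, is of the schematic form
\begin{equation*}
\nu\bigl(\{G(\cdot,\cdot,U) > K\lambda\} \cap \Omega\bigr) \leq \theta\, \nu\bigl(\{G > \lambda\} \cap \widetilde{\Omega}\bigr) + C\intdm{\{G > \lambda/H\}\cap \widetilde{\Omega}}{\Phi}{\nu} + F(\lambda),
\end{equation*}
valid for every $\lambda \geq \lambda_0$, with constants $K,H>1$, a tunable constant $\theta \in (0,1)$, a lower-order density $\Phi \in L^1(\nu)$, and a data term $F(\lambda)$ controlled by the local $L^{p_{*_s}+\delta_0}$-norm of $f$. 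Multiplying through by $\lambda^{\delta-1}$ and integrating on $\lambda \in [\lambda_0,\infty)$ converts this, via the layer cake formula and Fubini, into an inequality of the form
\begin{equation*}
\intdm{\Omega}{G^{1+\delta}}{\nu} \leq \theta\, K^{1+\delta}\intdm{\widetilde{\Omega}}{G^{1+\delta}}{\nu} + R,
\end{equation*}
where $R$ is a finite remainder built from $\|\Phi\|_{L^1(\nu)}$ and $\|f\|_{L^{p_{*_s}+\delta_0}}$. Choosing $\delta>0$ small enough that $\theta K^{1+\delta}<1$ and applying a standard hole-filling iteration over a nested family of scales then absorbs the self-referencing term into the left-hand side and yields \eqref{eq:Setup:HigherRegForU}.

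The entire difficulty, and the main obstacle, lies in proving the level set estimate itself. This will require a Caccioppoli-type inequality tailored to the dual pair $(U,\nu)$ that controls the $U^p$ and the $A(x,y)U^q$ contributions to $G$ \emph{simultaneously}; the test functions admissible in \eqref{eq:Intro:WeakFormulation} must be built from truncations of $u$ and cutoffs localized to Calder\'on--Zygmund cubes in $\bbR^{2n}$, and one must carefully separate the ``diagonal'' (near) block—which produces the reverse H\"older gain—from the ``nondiagonal'' (far) tail, which is absorbed using the global $L^1(\nu)$ bound on $G$ together with the nonlocal tail estimates recorded in Section \ref{sec:RevHolder}. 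The hypotheses \eqref{Assumption:Intro:Exp} on the exponents $p,q,s,t$ are precisely what keep the effective coefficient $A(x,y)$ from \eqref{eq:UDefn:A} locally bounded on the scales produced by the stopping-time procedure, so that the $q$-growth term does not spoil the reverse H\"older character inherent in the $p$-growth term.
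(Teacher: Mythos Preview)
Your overall architecture---reverse H\"older/Caccioppoli input, Calder\'on--Zygmund stopping time, a level set estimate, then layer cake plus iteration---is the same as the paper's (the Gehring step itself is deferred to the companion paper \cite{scott2020self}, while this note is devoted entirely to the level set estimate, Proposition~\ref{prop:LevelSetEstimate}). However, the schematic level set inequality you wrote down and your description of how to prove it both miss the specific obstruction in the nonlocal setting, and this is a genuine gap.

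First, the form. The paper does \emph{not} prove a good-$\lambda$ measure inequality of the type $\nu(\{G>K\lambda\})\leq\theta\,\nu(\{G>\lambda\})+\cdots$. It proves instead an integral inequality for $H:=G^{(p-1)/p}$,
\[
\frac{1}{\lambda^{p'}}\int_{\cB(x_0,\beta)\cap\{H>\lambda\}}H^{p'}\,\rmd\nu
\;\leq\;\frac{C}{\veps^{\vartheta}\lambda^{\gamma}}\int_{\cB(x_0,\alpha)\cap\{H>\lambda\}}H^{\gamma}\,\rmd\nu
\;+\;(\text{data term in }F),
\]
with $\gamma<p'$; the switch from $G$ to $H$ and from a measure to an integral inequality is what makes the subsequent Gehring argument run.

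Second, and more importantly, the off-diagonal mechanism. You say the nondiagonal part is ``absorbed using the global $L^1(\nu)$ bound on $G$ together with the nonlocal tail estimates.'' That is not how it works, and an attempt along those lines would fail. The reverse H\"older inequality \eqref{eq:RevHolderFinal} holds \emph{only} on diagonal balls $B\times B$; it gives no information on off-diagonal Calder\'on--Zygmund cubes $K_1\times K_2$ with $K_1\cap K_2=\emptyset$, and no test-function construction localised to such a cube is available (test functions live on $\bbR^n$, not $\bbR^{2n}$). The paper, following \cite{kuusi2015}, handles these cubes via a separate ``automatic'' almost-reverse H\"older inequality (Lemma~\ref{lma:OffDiagRevHolderIneq}), which holds for \emph{any} $W^{s,p}$ function and whose right-hand side involves the diagonal projections $K_1\times K_1$ and $K_2\times K_2$. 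Controlling those projection terms is the technical core of Section~\ref{subsec:OffDiag}: one runs an exit-time Vitali covering by diagonal balls (where \eqref{eq:RevHolderFinal} \emph{is} available), sorts the CZ cubes into near-diagonal ones that are swallowed by this covering and far-off-diagonal ones, and then performs a delicate combinatorial summation (Lemmas~\ref{lma:CombinatorialLemma} and \ref{lma:HardOffDiagSum}) over the latter. None of this is a tail absorption; it is an independent structural argument.
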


 Theorem \ref{thm:Intro:MainThm} is a simple consequence of the above theorem. 
 In \cite{scott2020self} we show \eqref{eq:Setup:HigherRegForU} directly
 by way of a fractional Gehring lemma applied to the dual pair of function and measure $(G,\nu)$.
 This fractional Gehring lemma in turn relies on a kind of fractional reverse H\"older inequality; for the exact statement see Theorem \ref{prop:RevHolder} below. This inequality holds only for diagonal sets of the type $B \times B \subset \bbR^{2n}$, and it is insufficient to apply tools traditionally used to prove Gehring's lemma such as the maximal function. Nevertheless, Kuusi, Mingione, and Sire in \cite{kuusi2015} used a novel localization technique to show that the fractional reverse H\"older-type inequality over diagonal balls is sufficient to prove a special fractional version of Gehring's lemma that is applicable for dual pairs of the above type. A key ingredient of this localization technique is a level set estimate 
 \begin{equation*}
 	\frac{1}{\lambda^2} \int_{\cB(x_0,\beta) \cap \{ U > \lambda \} } U^2 \, \rmd \nu \precsim \frac{1}{\lambda^r}  \int_{\cB(x_0,\alpha) \cap \{ U > \lambda \}} U^r \, \rmd \nu + \text{ terms involving level sets of } f,g
 \end{equation*}
 for some fixed $r<2$ and for any $\lambda \geq \lambda_0$, where $\lambda_0$ is a finite constant depending on the solution. Here $\cB = B \times B$, and $B \subset \bbR^{n}$ is a ball.
 We adapt the statement and proof of this level set estimate to our setting; see Proposition \ref{prop:LevelSetEstimate}. Key steps of the proof are described in Section \ref{sec:LevelSetEstimate}, and we additionally refer to the original discussions and summaries of the technique in \cite{kuusi2015, kuusi2014fractional}.
 
 We finally remark that the fractional Gehring lemma incorporates the level set estimate, and the proof of the fractional Gehring lemma itself is contained in \cite{scott2020self}. 
 
 This note is organized as follows: In the next section we identify notation and conventions. The reverse H\"older inequality is proved in \cite{scott2020self}, and for reference is stated in Section \ref{sec:RevHolder}. Section \ref{sec:LevelSetEstimate} contains the proof of the level set estimate in its entirety.

 \section{Preliminaries}
 

 Throughout, we denote positive constants by $c$, $C$, etc., and they may change from line to line. We list the dependencies in parentheses after the constant when we wish to make them explicit, i.e.\ if a constant $C$ depends only on $n$, $p$ and $s$, we write $C = C(n,p,s)$. We will abbreviate the following set of parameters as
 \begin{equation*}
 	\texttt{data} \equiv (n,p,q,s,t,M,\Vnorm{u}_{L^{\infty}})\,.
 \end{equation*}
 In $\bbR^n$, denote the open ball of radius $R$ centered at $x_0$ by
 \begin{equation*}
 	B(x_0,R) = B_R(x_0) := \{ x \in \bbR^n \, : \, |x-x_0| < R \}\,.
 \end{equation*}
 We will sometimes denote the ball $B \equiv B_R \equiv B_R(x_0)$ whenever the center and/or radius is clear from context. If $B$ is a ball centered at $x_0$ with radius $R$, then $\sigma B$ is the ball centered at $x_0$ with radius $\sigma R$.
 Given any measure $\mu$, denote the average of a $\mu$-measurable function $h$ over a set $\cA$ by
 \begin{equation*}
 	(h)_{\cA} := \fint_{\cA} h \, \rmd \mu = \frac{1}{\mu(\cA)} \intdm{\cA}{h(x)}{\mu}\,.
 \end{equation*}
 In dealing with functions defined on $\bbR^{2n}$ such as $U$, we consider the norm on $\bbR^{2n}$ defined by
 \begin{equation*}
 	\Vnorm{(x,y)} := \max \{ |x|, |y| \}\,,
 \end{equation*}
 where $|\cdot|$ denotes the Euclidean norm on $\bbR^n$.
 Denote the balls defined by this norm as
 \begin{equation*}
 	\begin{split}
 		\cB(x_0,y_0,R) &:= \{ (x,y) \in \bbR^n \times \bbR^n \, : \, \Vnorm{(x,y)-(x_0,y_0)} < R\} \\
 		&= B(x_0,R) \times B(y_0,R)\,.
 	\end{split}
 \end{equation*}
 If we denote
 \begin{equation*}
 	B_{\bbR^{2n}}(x_0,y_0,R) := \{ (x,y) \in \bbR^n \times \bbR^n \, : \, \sqrt{|x-x_0|^2 + |y-y_0|^2} < R \}\,,
 \end{equation*}
 then clearly
 \begin{equation*}
 	B_{\bbR^{2n}}(x_0,y_0,R) \subset \cB(x_0,y_0,R) \subset B_{\bbR^{2n}}(x_0,y_0,2R)\,. 
 \end{equation*}
 Often we will need to consider balls in $\bbR^{2n}$ centered at a point on the ``diagonal," that is, a point of the form $(x_0,x_0)$ for $x_0 \in \bbR^n$. In this case we abbreviate $\cB(x_0,x_0,R) \equiv \cB(x_0,R)$. We will also use the abbreviations $\cB(x_0,R) \equiv \cB_R(x_0) \equiv \cB_R \equiv \cB$ whenever the center and/or radius is clear from context. Whenever there is no ambiguity we write $\cB(x_0,\sigma R) = \sigma \cB$.
 We also denote
 \begin{equation*}
 	\text{Diag} := \{ (x,x) \, : \, x \in \bbR^n \}\,.
 \end{equation*}
 We will use the elementary inequality
 \begin{equation}\label{eq:GeometricSeriesEstimate}
 	2^{k r} \sum_{j=k-1}^{\infty} 2^{-j r} \leq \frac{4^{ r}}{r \ln(2)}\,, \qquad \text{ for } k \geq 1 \quad \text{and} \quad r \in (0,\infty)\,.
 \end{equation}
 The cardinality of a finite set $\cA$ is denoted by $\# \cA$. The set of nonnegative integers $\{ 0,1,2, \ldots \}$ is designated by $\bbZ_+$.

 For any domain $\Omega \subset \bbR^n$, $0 < \sigma <1$ and $r \in [1,\infty)$ the fractional Sobolev spaces are defined by the Gagliardo seminorm
 \begin{equation*}
 	W^{\sigma,r}(\Omega) := \left\{ u \in L^r(\Omega) \, : \, [u]_{W^{\sigma,r}(\Omega)} :=  \iintdm{\Omega}{\Omega}{\frac{|u(x)-u(y)|^r}{|x-y|^{n+\sigma r}} }{y}{x} < \infty \right\}
 \end{equation*}
 with norm $\Vnorm{\cdot}_{W^{\sigma,r}(\Omega)}^r :=  \Vnorm{\cdot}_{L^{r}(\Omega)}^r + [\cdot]_{W^{\sigma,r}(\Omega)}^r$.
 
 We will also use the following fractional Poincar\'e-Sobolev-type inequalities throughout the note. A proof of the first can be found in several places; see for instance \cite{DNPV12, bass2005holder}. The second can be found in \cite{mingione2003singular, schikorra2016nonlinear}.
 
 \begin{theorem}[Fractional Poincar\'e-Sobolev Inequality]\label{thm:SobolevInequality}
 	Let $r \in [1,\infty)$, $0<\sigma<1$. Let $B = B_R(x_0)$ for some $R>0$, $x_0 \in \bbR^n$. Then there exists $C = C(n,r,\sigma) > 0$ such that 
 	\begin{equation*}
 		\left( \fint_B \left| \frac{v(x) - (v)_B}{R^{\sigma}} \right|^{r^{*_{\sigma}}} \, \rmd x \right)^{1/{r^{*_{\sigma}}}} \leq C \left( \int_B \fint_B \frac{|v(x)-v(y)|^r}{|x-y|^{n+\sigma r}} \, \rmd y \, \rmd x \right)^{1/r}
 	\end{equation*}
 	for every $v \in W^{\sigma,r}(B)$.
 \end{theorem}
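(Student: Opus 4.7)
The plan is to reduce the estimate to the unit ball by scaling and then invoke two classical fractional ingredients on $B_1$. Set $w(z) := v(x_0 + Rz)$ for $z \in B_1 := B_1(0)$; the change of variables $y = x_0 + Rz$ produces the identities $\fint_B |v-(v)_B|^{r^*}\, \rmd x = \fint_{B_1}|w-(w)_{B_1}|^{r^*}\, \rmd z$ and $\int_B \fint_B |v(x)-v(y)|^r |x-y|^{-n-\sigma r}\, \rmd y\, \rmd x = R^{-\sigma r} \int_{B_1}\fint_{B_1} |w(z_1)-w(z_2)|^r|z_1-z_2|^{-n-\sigma r}\, \rmd z_2\, \rmd z_1$. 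The explicit factor $R^{-\sigma}$ on the left-hand side of the claimed estimate is thus matched by the $R^{-\sigma}$ emerging on the right after scaling, and the inequality reduces to the case $R=1$, $x_0 = 0$.

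On $B_1$ I would combine the fractional Poincar\'e inequality
$$\|w - (w)_{B_1}\|_{L^r(B_1)} \leq C(n,r,\sigma)\,[w]_{W^{\sigma,r}(B_1)},$$
which follows by writing $w(z) - (w)_{B_1} = \fint_{B_1}(w(z)-w(z'))\, \rmd z'$, applying Jensen's inequality, and using the crude bound $|z-z'|^{n+\sigma r} \leq 2^{n+\sigma r}$ on $B_1 \times B_1$ to introduce the singular kernel, with the fractional Sobolev embedding $W^{\sigma,r}(B_1) \hookrightarrow L^{r^*}(B_1)$, which gives
$$\|w - (w)_{B_1}\|_{L^{r^*}(B_1)} \leq C(n,r,\sigma)\bigl(\|w - (w)_{B_1}\|_{L^r(B_1)} + [w]_{W^{\sigma,r}(B_1)}\bigr).$$
Chaining the two inequalities and absorbing the constant $|B_1|^{1/r^* - 1/r}$, which depends only on $n$, $r$, $\sigma$, yields the reduced estimate on the unit ball.

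The main technical obstacle is the fractional Sobolev embedding on a bounded domain, which I would invoke as a black box. The standard proof constructs an extension operator $E \colon W^{\sigma,r}(B_1) \to W^{\sigma,r}(\bbR^n)$ (valid because $B_1$ is Lipschitz) with $\|Ew\|_{W^{\sigma,r}(\bbR^n)} \leq C \|w\|_{W^{\sigma,r}(B_1)}$ and then applies the global embedding $W^{\sigma,r}(\bbR^n) \hookrightarrow L^{r^*}(\bbR^n)$, the latter being a consequence of the Riesz potential representation of the Gagliardo seminorm together with the Hardy--Littlewood--Sobolev inequality. Both steps are developed in detail in the references \cite{DNPV12, bass2005holder} cited with the statement, so no new argument is required beyond the scaling and the short Poincar\'e computation above.
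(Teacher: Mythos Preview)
The paper does not supply its own proof of this theorem; it is stated as a known result with the sentence ``A proof of the first can be found in several places; see for instance \cite{DNPV12, bass2005holder}.'' Your sketch --- rescale to the unit ball, then combine the elementary fractional Poincar\'e inequality with the fractional Sobolev embedding $W^{\sigma,r}(B_1)\hookrightarrow L^{r^{*_\sigma}}(B_1)$ obtained via a Lipschitz extension and the global Hardy--Littlewood--Sobolev/Riesz potential argument --- is exactly the standard route taken in those references, and the scaling computation you wrote is correct (both sides pick up the same factor $R^{-\sigma}$). So there is nothing to compare: your proposal is correct and is essentially what the cited sources do.
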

 
 \begin{theorem}[Fractional Poincar\'e Inequality]\label{thm:PoincareInequality}
 	Let $r \in [1,\infty)$, $0<\sigma<1$. Let $B = B_R(x_0)$ for some $R>0$, $x_0 \in \bbR^n$. Then there exists $C = C(n,r) > 0$ such that 
 	\begin{equation*}
 		\left( \fint_B \left| \frac{v(x) - (v)_B}{R^{\sigma}} \right|^{r} \, \rmd x \right)^{1/r} \leq C \left( \int_B \fint_B \frac{|v(x)-v(y)|^r}{|x-y|^{n+\sigma r}} \, \rmd y \, \rmd x \right)^{1/r}
 	\end{equation*}
 	for every $v \in W^{\sigma,r}(B)$.
 \end{theorem}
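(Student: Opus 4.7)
The plan is to deduce this inequality from Jensen's inequality together with the trivial bound $|x-y|\leq 2R$ for $x,y \in B$; no deep tool is required beyond these. I would begin by writing
\[
v(x) - (v)_B = \fint_B \bigl(v(x)-v(y)\bigr) \, \rmd y, \qquad x \in B,
\]
and applying Jensen's inequality with the convex function $|\cdot|^r$ to obtain the pointwise bound $|v(x) - (v)_B|^r \leq \fint_B |v(x)-v(y)|^r \, \rmd y$.

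Next I would artificially multiply and divide the integrand by $|x-y|^{n+\sigma r}$ and use the fact that $|x-y|\leq 2R$ whenever $x,y\in B$ to estimate
\[
|v(x) - (v)_B|^r \leq (2R)^{n+\sigma r} \fint_B \frac{|v(x)-v(y)|^r}{|x-y|^{n+\sigma r}} \, \rmd y.
\]
Dividing through by $R^{\sigma r}$ and absorbing the factors of $R^n$ and $|B|^{-1}$ into a dimensional constant $C(n,r)$ gives
\[
\frac{|v(x) - (v)_B|^r}{R^{\sigma r}} \leq C(n,r) \int_B \frac{|v(x)-v(y)|^r}{|x-y|^{n+\sigma r}}\,\rmd y.
\]

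To finish, I would average the resulting inequality over $x \in B$, interchange the order of integration via Fubini (noting that $\fint_B \int_B$ and $\int_B \fint_B$ agree, since both integrations are carried out over the same ball), and take $r$-th roots on both sides to recover the stated inequality.

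I do not anticipate any real obstacle in this argument; the whole proof is a short direct computation once one observes that the singular kernel $|x-y|^{-(n+\sigma r)}$ can be bounded from below by its extreme value $(2R)^{-(n+\sigma r)}$ on $B \times B$. This same crude bound would not suffice for the sharper Sobolev-type embedding in Theorem \ref{thm:SobolevInequality}, which requires finer capacity or rearrangement methods, but it is entirely adequate for the $L^r$--Poincar\'e inequality stated here — which is precisely why the exponent on the left-hand side is $r$ rather than $r^{*_\sigma}$.
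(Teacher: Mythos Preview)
Your argument is correct and is exactly the standard short computation for this inequality: Jensen applied to the representation $v(x)-(v)_B=\fint_B(v(x)-v(y))\,\rmd y$, followed by the crude bound $|x-y|\le 2R$ and an average in $x$. The only point worth making explicit is that the raw constant you obtain is $2^{n+\sigma r}/\omega_n$, which you then dominate by $2^{n+r}/\omega_n$ using $\sigma<1$ to get the stated $\sigma$-independent $C(n,r)$.

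The paper does not supply its own proof of this theorem; it simply cites \cite{mingione2003singular, schikorra2016nonlinear} for the result. Your self-contained derivation is therefore a strict addition rather than an alternative to anything in the text, and it is perfectly adequate for the purposes of the paper since only the $L^r$ (not $L^{r^{*_\sigma}}$) conclusion is needed here.
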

 
\section{Sobolev Inequality for Dual Pairs and Reverse H\"older Inequality}\label{sec:RevHolder}

\subsection{The Dual Pair Measure}

We summarize some basic properties of the measure $\nu$ defined in \eqref{eq:MeasureDefn}. These properties are natural extensions of those established in \cite[Proposition 4.1]{kuusi2015}; their proof is sketched in \cite{scott2020self}. 
\begin{theorem}\label{thm:Measure}
For any $\veps \in (0,1/p)$, the measure $\nu$ defined as
\begin{equation*}
\nu(\cA) := \intdm{\cA}{\frac{1}{|x-y|^{n-\veps p}}}{y}\, \rmd x\,, \qquad \cA \subset \bbR^{2n}\,,
\end{equation*}
is absolutely continuous with respect to Lebesgue measure on $\bbR^{2n}$. Additionally,
\begin{itemize}
\item For $\cB = B_R(x_0) \times B_R(x_0)$,
\begin{equation}\label{eq:MeasureOfBall}
\nu(\cB) = \frac{c(n,p,\veps) R^{n+\veps p}}{\veps}\,,
\end{equation}
where $c(n,p,\veps)$ is a constant depending only on $n$, $p$ and $\veps$ that satisfies $1/\widetilde{c}(n,p) \leq c(n,p,\veps) \leq \widetilde{c}(n,p)$, where $\widetilde{c}$ is another constant depending only on $n$ and $p$. 

\item For every $x \in \bbR^n$ and for $R \geq r > 0$, \begin{equation}\label{eq:MeasureDoublingProperty}
\frac{\nu( \cB(x,R))}{\nu( \cB(x,r))} = \left( \frac{R}{r} \right)^{n+\veps p}\,.
\end{equation}

\item For every $a \leq 1$, $R> 0$ and $x \in \bbR^n$, there exists a constant $C_d = C_d(n,p)$ such that
\begin{equation}\label{eq:MeasureOfBallAndCube}
\frac{\nu(\cB(x,R))}{\nu(K_1 \times K_2)} \leq \frac{C_d}{a^{2n} \veps}
\end{equation}
for any two cubes $K_1$, $K_2 \subset B_R(x)$ with sides parallel to the coordinate axes and such that $|K_1| = |K_2| = (aR)^n$.
\end{itemize}
\end{theorem}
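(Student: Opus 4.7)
The plan is to establish the volume formula \eqref{eq:MeasureOfBall} first and then read off the doubling property and the cube bound as corollaries. Absolute continuity with respect to Lebesgue measure on $\bbR^{2n}$ is immediate: the density $|x-y|^{-(n-\veps p)}$ is locally integrable since the singular set is the $n$-dimensional diagonal and $n-\veps p < n$, so Fubini gives integrability across the diagonal.

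For \eqref{eq:MeasureOfBall} I would first use translation invariance to take $x_0 = 0$, then perform the change of variables $x = R\tilde{x}$, $y = R\tilde{y}$ to extract the factor $R^{2n}\cdot R^{-(n-\veps p)} = R^{n+\veps p}$. This reduces the claim to two-sided bounds on
\[
I(n,p,\veps) \;:=\; \iintdm{B_1(0)}{B_1(0)}{|x-y|^{-(n-\veps p)}}{y}{x}\,,
\]
of the form $\widetilde{c}(n,p)^{-1}/\veps \leq I(n,p,\veps) \leq \widetilde{c}(n,p)/\veps$. For the upper bound, enlarge the inner domain: for each $x \in B_1$ we have $B_1 \subset B_2(x)$, so spherical coordinates give $\int_{B_1}|x-y|^{-(n-\veps p)}\,\rmd y \leq \omega_{n-1}\, 2^{\veps p}/(\veps p)$, and integrating in $x$ yields $\veps\, I(n,p,\veps)$ bounded above uniformly for $\veps \in (0,1/p)$. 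For the matching lower bound, shrink to $B_{1-|x|}(x) \subset B_1$ and use spherical coordinates to obtain $\int_{B_1}|x-y|^{-(n-\veps p)}\,\rmd y \geq \omega_{n-1}(1-|x|)^{\veps p}/(\veps p)$; since $(1-|x|)^{\veps p} \geq (1-|x|)$ for $\veps p \leq 1$, the outer integral $\int_{B_1}(1-|x|)^{\veps p}\,\rmd x$ is bounded below by a positive constant depending only on $n$, finishing the lower bound.

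The doubling property \eqref{eq:MeasureDoublingProperty} is then immediate: both $\nu(\cB(x,R))$ and $\nu(\cB(x,r))$ carry the \emph{same} prefactor $c(n,p,\veps)/\veps$ by \eqref{eq:MeasureOfBall}, so it cancels in the ratio and only the pure $(R/r)^{n+\veps p}$ scaling survives. For the cube bound \eqref{eq:MeasureOfBallAndCube}, I would bound $\nu(K_1\times K_2)$ from below by replacing the density by its minimum on $K_1 \times K_2$: since $K_1, K_2 \subset B_R(x)$, one has $|x'-y'| \leq 2R$ on $K_1 \times K_2$, hence
\[
\nu(K_1 \times K_2) \;\geq\; \frac{|K_1|\,|K_2|}{(2R)^{n-\veps p}} \;=\; \frac{a^{2n}\, R^{n+\veps p}}{2^{n-\veps p}}\,.
\]
Dividing the upper bound $\nu(\cB(x,R)) \leq \widetilde{c}(n,p)\, R^{n+\veps p}/\veps$ from \eqref{eq:MeasureOfBall} by this lower bound yields the stated inequality with $C_d = \widetilde{c}(n,p)\cdot 2^n$.

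The only mildly delicate step, and the one I would flag as the main obstacle, is ensuring that the constant $c(n,p,\veps)$ in \eqref{eq:MeasureOfBall} is \emph{uniformly} bounded away from $0$ and $\infty$ as $\veps \to 0^+$, rather than carrying a hidden $\veps$-dependence. The upper bound is controlled by $2^{\veps p} \leq 2$, and the lower bound by the pointwise inequality $(1-|x|)^{\veps p} \geq (1-|x|)$ valid for $\veps p \leq 1$; both are uniform in $\veps \in (0,1/p)$, which is precisely why the range of admissible $\veps$ is restricted to $(0, 1/p)$ in the statement.
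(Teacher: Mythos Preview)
Your proposal is correct in all details: the scaling reduction, the upper and lower bounds on $I(n,p,\veps)$ via the inclusions $B_{1-|x|}(x)\subset B_1\subset B_2(x)$, the uniformity in $\veps\in(0,1/p)$ via $2^{\veps p}\le 2$ and $(1-|x|)^{\veps p}\ge (1-|x|)$, and the trivial lower bound $\nu(K_1\times K_2)\ge (2R)^{-(n-\veps p)}|K_1||K_2|$ for \eqref{eq:MeasureOfBallAndCube} all go through exactly as you describe.

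Note, however, that the paper does not actually supply a proof of this theorem here; it simply states that the properties are natural extensions of \cite[Proposition~4.1]{kuusi2015} and that the proof is sketched in \cite{scott2020self}. Your argument is the standard one and is essentially what one finds in those references (modulo cosmetic choices of enlarging/shrinking balls), so there is nothing to distinguish methodologically.
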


\subsection{Reverse H\"older Inequality} Recall that 
\begin{equation}\label{eq:DefnOfUandF}
U(x,y) = \frac{|u(x)-u(y)|}{|x-y|^{s+\veps}}\,, \qquad\text{and define\,\,} F(x,y) := |f(x)|\,.
\end{equation}
Then $F \in L^{p_* + \delta}_{loc}(\bbR^{2n})$ for every $\delta \in (0,\delta_0)$, as a direct calculation using the properties of measure $\nu$.

We now report the compatibility of the Sobolev-Poincar\'e inequality with the definition of $U$. Given $B = B_R(x_0)$, define $\tau \in (0,1)$, and $\eta \in (1,\infty)$ to be differentiability and integrability constants respectively that have yet to be fixed. Letting $ \textstyle \veps \in (0,\min\{{s\over p}, 1-s\})$ and using \eqref{eq:MeasureOfBall},
\begin{equation*}
\fint_B \int_B \frac{\left| u(x) - u(y) \right|^{\eta}}{|x-y|^{n+\tau {\eta}}} \, \rmd y \, \rmd x =  \frac{C R^{\veps p}}{\veps}\fint_{\cB} U^{\eta} \, \rmd \nu
\end{equation*}
so long as
\begin{equation*}
\tau + \frac{\veps p}{\eta} = s + \veps\,.
\end{equation*}
Since $ \textstyle \veps \in (0,{s\over p})$ and $\veps < 1-s$ the exponent $\tau$ remains in $(0,1)$ for every $\eta \in (1,\infty)$. With this choice of $\tau$, by the fractional Sobolev inequality
\begin{equation*}
\left( \fint_B \left| \frac{u(x) - (u)_B}{R^{\tau}} \right|^m \, \rmd x \right)^{1/m} \leq C \left( \fint_B \int_B \frac{\left| u(x) - u(y) \right|^{\eta}}{|x-y|^{n+\tau \eta}} \, \rmd y \, \rmd x \right)^{1/\eta}
\end{equation*}
for every $m \in [1,\eta^{*_{\tau}}]$ with $\eta \in (1,\infty)$.
We choose $\eta$ to satisfy the relation
\begin{equation}\label{eq:DefnOfEta}
p = \eta^{*_{\tau}} = \frac{n \eta}{n - \tau \eta} = \frac{n \eta}{n- \eta(s + \veps - \frac{\veps p}{\eta})} \quad \Longleftrightarrow \quad \eta = \frac{np  +  \veps p^2}{n + sp + \veps p}\,.
\end{equation}
This choice of $\eta$ is a valid Lebesgue exponent; note that $\eta < p$ for all $n \geq 2$ and for all $p \in (1,\infty)$, and that $\eta > 1$ so long as $p \geq 2$.
Taking $m=\eta^{*_{\tau}}$ we summarize this discussion in the following lemma:
\begin{lemma}\label{lma:SobolevEmbeddingForDualPair}
Let $\veps \in (0,s/p)$ with $\veps < 1-s$ and $p \geq 2$.
Define $\eta = \frac{np  +  \veps p^2}{n + sp + \veps p}$. Then 
\begin{equation*}
\left( \fint_B \left| u(x) - (u)_B \right|^p \, \rmd x \right)^{1/p} \leq \frac{C R^{s+\veps}}{\veps^{1/\eta}} \left( \fint_{\cB} U^{\eta} \, \rmd \nu \right)^{1/\eta}\,,
\end{equation*}
where $C = C(n,s,p)$. The same inequality holds when the ball $B$ is replaced by a cube $Q$ with sides of length $R$ and with $\cB$ replaced by $Q \times Q$.
\end{lemma}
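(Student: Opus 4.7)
The plan is simply to orchestrate the fractional Sobolev–Poincar\'e inequality from Theorem \ref{thm:SobolevInequality} with the change of variables from the pointwise quotient $|u(x)-u(y)|/|x-y|^{s+\veps}$ to $U(x,y)$ against the measure $\nu$, using the volume formula \eqref{eq:MeasureOfBall}. The preamble to the lemma essentially carries out this computation already, and my proof just needs to wrap it up cleanly.

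First I would fix the exponent relation. Define $\tau := s + \veps - \veps p/\eta$, so that with the given $\eta = (np + \veps p^2)/(n+sp+\veps p)$ one verifies by a direct algebraic manipulation that $\eta^{*_\tau} = p$. I would then check admissibility: the hypotheses $\veps < s/p$ and $\veps < 1-s$ put $\tau$ strictly between $0$ and $1$ (the bound $\tau > 0$ reducing to $sn > 0$, and $\tau < 1$ following from $\tau < s + \veps < 1$), while $p \geq 2$ gives $\eta > 1$. Thus Theorem \ref{thm:SobolevInequality} applies with exponents $(\eta,\tau)$ and target exponent $m = \eta^{*_\tau} = p$, yielding
\begin{equation*}
\left(\fint_B |u(x)-(u)_B|^p\,\rmd x\right)^{1/p} \le C\, R^{\tau}\left(\fint_B\int_B \frac{|u(x)-u(y)|^{\eta}}{|x-y|^{n+\tau\eta}}\,\rmd y\,\rmd x\right)^{1/\eta}.
\end{equation*}

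Next I would convert the integral on the right to an integral of $U^{\eta}$ against $\nu$. By the definition $U(x,y) = |u(x)-u(y)|/|x-y|^{s+\veps}$, the integrand equals $U(x,y)^{\eta} |x-y|^{(s+\veps)\eta - n - \tau\eta}$. By the choice of $\tau$, the exponent simplifies to $-(n - \veps p)$, which is exactly the density of $\nu$. Hence
\begin{equation*}
\int_B\int_B \frac{|u(x)-u(y)|^{\eta}}{|x-y|^{n+\tau\eta}}\,\rmd y\,\rmd x = \int_{\cB} U^{\eta}\,\rmd\nu,
\end{equation*}
and dividing by $|B|^2 \approx R^{2n}$ and using $\nu(\cB) = c(n,p,\veps)R^{n+\veps p}/\veps$ from Theorem \ref{thm:Measure} gives the factor $R^{\veps p}/\veps$ out front of the averaged integral. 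Raising to the $1/\eta$ power and multiplying by $R^{\tau}$, the powers of $R$ combine via $\tau + \veps p/\eta = s+\veps$, yielding the desired bound with constant $C = C(n,s,p)$ absorbing the harmless $c(n,p,\veps)^{1/\eta}$, which is uniformly bounded in $\veps$ by Theorem \ref{thm:Measure}.

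For the cube version, the argument is identical: the fractional Poincar\'e–Sobolev inequality of Theorem \ref{thm:SobolevInequality} holds on cubes with the same structure (cubes being John domains, the constant depends only on $n,s,p$), and the measure-theoretic identities replacing $B\times B$ with $Q\times Q$ are unchanged since the density of $\nu$ and the relation $\nu(Q\times Q) \approx R^{n+\veps p}/\veps$ still hold. The only real "obstacle" is bookkeeping — confirming $\tau \in (0,1)$ and $\eta > 1$ under the stated hypotheses, and tracking that the $\veps$-dependence of $c(n,p,\veps)$ does not spoil the constant $C(n,s,p)$ — but all of this is routine given Theorem \ref{thm:Measure}.
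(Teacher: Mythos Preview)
Your proof is correct and is exactly the paper's own argument: the paragraph preceding the lemma already carries out this computation (fix $\tau$ via $\tau + \veps p/\eta = s+\veps$, choose $\eta$ so that $\eta^{*_\tau}=p$, apply Theorem~\ref{thm:SobolevInequality}, and rewrite the seminorm as $\frac{CR^{\veps p}}{\veps}\fint_{\cB}U^\eta\,\rmd\nu$ using \eqref{eq:MeasureOfBall}), and you have simply made the admissibility checks explicit. One small bookkeeping slip: the right-hand side of Theorem~\ref{thm:SobolevInequality} is $\fint_B\int_B$, so you divide $\int_{\cB}U^\eta\,\rmd\nu$ by $|B|\approx R^n$ (not $|B|^2$) to reach the averaged form---this still produces the factor $R^{\veps p}/\veps$ you state, so the conclusion is unaffected.
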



Recall that $G(x, y, U) =U^p + A(x,y) U^q$. We have the following $L^1_{loc}$ estimate for $G$ which will lead us to a scale-invariant reverse H\"older's inequality.  The statement is precisely \cite[Proposition 4.3]{scott2020self} and its proof can be found in the same paper. 
\begin{proposition}\label{prop:RevHolder}
Let $p \in [2,\infty)$, and let $\veps < 1-s$ with $ \textstyle \veps \in ( 0, \min \{ s( \frac{tq}{sp} - \frac{1}{p'}) , \frac{s}{p} \} )$. (This choice is possible by Assumption \ref{Assumption:Intro:Exp}). Let $\eta$ be given by the formula  in \eqref{eq:DefnOfEta}, Let $B = B_R(x_0)$ be a ball with $R \leq 1$. Then there exists a constant $C$ depending only on $\texttt{data}$ such that for any solution $u \in W^{s,p}(\bbR^n) \cap L^{\infty}(\bbR^n)$ to \eqref{eq:Intro:MainEqn} and for any $\sigma \in (0,1)$
\begin{equation}\label{eq:RevHolder}
\begin{split}
\left( \fint_{\frac{1}{4}\cB} G(x,y,U) \, \rmd \nu \right)^{1/p} &\leq \frac{C}{\veps^{1/\eta - 1/p}}\Bigg[\frac{1}{\sigma } \left( \fint_{\cB} U^{\eta} \, \rmd \nu \right)^{1/\eta} \\
	&\quad + { \sigma}\sum_{k=0}^{\infty} 
	\big( 2^{-k (\frac{sp}{p-1} -s - \veps)} +  2^{-k (\frac{tq}{p-1} -s - \veps)} \big) 
	\left( \fint_{2^k \cB} U^{\eta} \, \rmd \nu \right)^{1/\eta}\Bigg]\\
	&+ \frac{C [\veps \nu(\cB)]^{\frac{\theta}{p-1}}}{\veps^{  (1/p_* - 1/p') \frac{1}{p-1}  }} \left[ \left( \fint_{\cB} F^{p_*} \, \rmd \nu \right)^{1/p_*} \right]^{1/(p-1)}\,, \\
\end{split}
\end{equation}
where
\begin{equation*}
\theta := \frac{s-\veps(p-1)}{n+\veps p} > 0\,.
\end{equation*}
\end{proposition}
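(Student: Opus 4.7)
The plan is to deduce \eqref{eq:RevHolder} from a Caccioppoli-type inequality combined with Lemma \ref{lma:SobolevEmbeddingForDualPair}, following the scheme of the reverse Hölder inequality for the usual $p$-Laplacian adapted to the nonlocal $(p,q)$-growth setting. The essential steps are: (i) choose a test function that kills the mean of $u$ on $\frac{1}{2}B$ and localizes to $\frac{1}{4}B$; (ii) extract a positive $p$- and $q$-energy on $(\frac{1}{4}B)^{2}$ via the monotonicity inequality $(|a|^{p-2}a-|b|^{p-2}b)(a-b)\geq c_{p}|a-b|^{p}$, valid for $p\geq 2$; (iii) bound the resulting error terms by a $\sigma$-absorbable piece and a Poincaré-controllable piece; (iv) apply Lemma \ref{lma:SobolevEmbeddingForDualPair} to convert Poincaré quantities into averages of $U^{\eta}$ against $\nu$.

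Concretely, take $\zeta\in C^{\infty}_{c}(\frac{1}{2}B)$ with $\zeta\equiv 1$ on $\frac{1}{4}B$ and $|\nabla\zeta|\leq C/R$, and test \eqref{eq:Intro:WeakFormulation} with $\varphi := \zeta^{q}(u-\kappa)$, $\kappa := (u)_{\frac{1}{2}B}$. Decomposing the finite-difference factor via
\begin{equation*}
\zeta^{q}(x)(u(x)-\kappa) - \zeta^{q}(y)(u(y)-\kappa) = \zeta^{q}(x)(u(x)-u(y)) + (u(y)-\kappa)(\zeta^{q}(x)-\zeta^{q}(y))
\end{equation*}
and symmetrizing in $x\leftrightarrow y$, the monotonicity inequality gives a positive lower bound matching the integrand $G(x,y,U)|x-y|^{n-\veps p}$ from \eqref{eq:UDefn:A} on $(\frac{1}{4}B)^{2}$, while the cross product is tamed by Young's inequality with parameter $\sigma$: a $\sigma$-multiple of the local energy on $(\frac{1}{2}B)^{2}$ is absorbed, and the remainder is controlled by $\sigma^{-p+1}\fint_{\frac{1}{2}B}|u-\kappa|^{p}/R^{p(s+\veps)}$, which Lemma \ref{lma:SobolevEmbeddingForDualPair} converts to $\sigma^{-p+1}\bigl(\fint_{\cB}U^{\eta}\,\rmd\nu\bigr)^{p/\eta}$.

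For the nonlocal contribution over $\frac{1}{2}B\times(\bbR^{n}\setminus\frac{1}{2}B)$, the complement is decomposed dyadically: on $2^{k}B\setminus 2^{k-1}B$ the kernel contributes $(2^{k}R)^{-n-sp}$, and the factor $|u(y)-\kappa|^{p-1}$ is handled via $|u(y)-(u)_{2^{k}B}|^{p-1}$ plus the telescoping sum $\sum_{j\leq k}|(u)_{2^{j+1}B}-(u)_{2^{j}B}|^{p-1}$, each term of which is bounded by $(2^{j}R)^{(s+\veps)(p-1)}\bigl(\fint_{2^{j}\cB}U^{\eta}\,\rmd\nu\bigr)^{(p-1)/\eta}$ via Lemma \ref{lma:SobolevEmbeddingForDualPair}. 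Taking the $\frac{1}{p-1}$-root of the resulting geometric series in $k$ produces the decay factor $2^{-k(sp/(p-1)-s-\veps)}$; the $q$-part of $\cE$ gives the analogous factor $2^{-k(tq/(p-1)-s-\veps)}$ after using that $A(x,y)$ is uniformly bounded under Assumption \ref{Assumption:Intro:Exp} and carries the favorable exponent $(s-t)q+\veps(q-p)$. The data term $\int f\varphi$ is estimated via Hölder with exponent $p_{*}$ followed by one more application of Lemma \ref{lma:SobolevEmbeddingForDualPair}, producing the last summand of \eqref{eq:RevHolder}; the factor $[\veps\nu(\cB)]^{\theta/(p-1)}$ arises from matching $R^{s-\veps(p-1)}$ against $\nu(\cB)^{\theta}$ via \eqref{eq:MeasureOfBall}.

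The main obstacle is the coupling between the two phases: a single $\veps$ must simultaneously render both tail exponents $sp/(p-1)-s-\veps$ and $tq/(p-1)-s-\veps$ strictly positive, keep $\eta\in(1,p)$, and give $A(x,y)$ a nonnegative exponent of $|x-y|$. The hypothesis $\veps<\min\{s(tq/(sp)-1/p'),\,s/p\}$ is tuned exactly so that all these constraints hold at once, and verifying them alongside the $\veps$-bookkeeping in the geometric series sums is where the bulk of the technical work lies.
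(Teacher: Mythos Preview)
The paper does not contain a proof of this proposition. Immediately preceding the statement, the authors write that ``the statement is precisely \cite[Proposition 4.3]{scott2020self} and its proof can be found in the same paper.'' In other words, Proposition \ref{prop:RevHolder} is imported here as a black box from the companion article, and the present note supplies no argument of its own to compare against.

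Your sketch is the standard Caccioppoli-plus-Sobolev route used in the nonlocal literature (in particular in \cite{kuusi2015} for $p=2$), and it is almost certainly what is carried out in \cite{scott2020self}: localized test function $\varphi=\zeta^{q}(u-(u)_{\frac12 B})$, monotonicity lower bound on the diagonal piece, Young's inequality with a free parameter to separate local and tail contributions, dyadic decomposition of the tail with Lemma \ref{lma:SobolevEmbeddingForDualPair} applied at each scale, and H\"older on the data term. The outline is coherent and the role you assign to the hypothesis $\veps<\min\{s(\tfrac{tq}{sp}-\tfrac{1}{p'}),\tfrac{s}{p}\}$ is correct. One point to be careful about if you flesh this out: the parameter $\sigma$ in \eqref{eq:RevHolder} multiplies the \emph{tail} series and its reciprocal multiplies the \emph{local} average, so the Young split must be arranged so that the tail factor carries the small parameter---this comes from the product structure (local factor)$\times$(nonlocal tail) in the off-diagonal piece of the Caccioppoli estimate, not from absorbing a $\sigma$-multiple of the local energy as you wrote. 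Also, the $q$-phase error terms require the $L^{\infty}$ bound on $u$ (as the paper notes in the remark after Corollary \ref{cor:RevHolderFinal}), which you should invoke explicitly when estimating $|u(x)-u(y)|^{q-1}$ by $(2\Vnorm{u}_{L^\infty})^{q-p}|u(x)-u(y)|^{p-1}$.
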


\begin{remark}
We make some remarks. The upper bound in \eqref{eq:RevHolder} can be simplified down to just one series. Since $sp \geq tq$
\begin{equation}\label{eq:SeriesComparisons}
	\begin{split}
		2^{-k(\frac{sp}{p-1} -s -\veps)} \leq 2^{-k(\frac{tq}{p-1} - s - \veps)}\,, \qquad k \in \bbZ_+\,,
	\end{split}
\end{equation} 
so we can replace the infinite series on the right-hand side of \eqref{eq:RevHolder} with
\begin{equation*}
	\frac{C \sigma}{\veps^{1/\eta-1/p}} \sum_{k = 0}^{\infty} \alpha_k \Vparen{\fint_{2^k \cB} U^{\eta} \, \rmd \nu }^{1/\eta}\,,
\end{equation*}
where
\begin{equation}\label{eq:DefnOfGeometricSeries}
	\alpha_k := 2^{-k(\frac{tq}{p-1} - s - \veps)}\,.
\end{equation}
Moreover, in the case $a \equiv 0$ one simply takes $\alpha_k = 2^{-k(\frac{s}{p-1}-\veps)}$.
In any case, since $ \textstyle \veps \leq \min \{ s( \frac{tq}{sp} - \frac{1}{p'}) , \frac{s}{p} \} $ the series $\sum_{k=0}^{\infty}\alpha_k <\infty$ and as a consequence  
\[
\begin{split}
	\sum_{k = 0}^{\infty} \alpha_k \Vparen{\fint_{2^k \cB} U^{\eta} \, \rmd \nu }^{1/\eta} &\leq \sum_{k = 0}^{\infty} \alpha_k \Vparen{\fint_{2^k \cB} U^{p} \, \rmd \nu }^{1/p}\\
	&=C(\veps,p,s)\sum_{k = 0}^{\infty} \alpha_k  \Vparen{\int_{2^k B}\int_{2^{k}B} {|u(y)-u(x)|^{p}\over |x-y|^{n+sp}} \, \rmd x \rmd y }^{1/p}\\
	& \leq  R^{-n/p-\epsilon}C(\veps,p,s)\Vparen{\int_{\mathbb{R}^{n}}\int_{\mathbb{R}^{n}} {|u(y)-u(x)|^{p}\over |x-y|^{n+sp}} \, \rmd x \rmd y }^{1/p} < \infty. 
\end{split}
\]

\end{remark}

The following corollary establishes a genuine scale-invariant reverse H\"older inequality for an appropriately scaled version of the integrand $G$. This quantity will satisfy a self-improving result. 
\begin{corollary}\label{cor:RevHolderFinal}
	Let $\textstyle \veps \in \left( 0, \min \{ s( \frac{tq}{sp} - \frac{1}{p'}) , \frac{s}{p} \} \right)$. (This choice is possible by Assumption \ref{Assumption:Intro:Exp}). Let $B = B_R(x_0)$ be a ball with $R \leq 1$. Define $H(x,y,U) := G(x,y,U)^{(p-1)/p}$. Then there exists a constant $C$ depending only on $\texttt{data}$ such that for any solution $u \in W^{s,p}(\bbR^n) \cap L^{\infty}(\bbR^n)$ to \eqref{eq:Intro:MainEqn} and for any $\sigma \in (0,1)$
	\begin{equation}\label{eq:RevHolderFinal}
		\begin{split}
			\left( \fint_{\frac{1}{4} \cB} H(x,y,U)^{p'} \, \rmd \nu \right)^{1/p'} &\leq \frac{C}{\sigma \veps^{1/\gamma - 1/p'}} \left( \fint_{\cB} H(x,y,U)^{\gamma} \, \rmd \nu \right)^{1/\gamma} \\
			&\qquad + \frac{C \sigma}{ \veps^{1/\gamma - 1/p'}} \sum_{k=0}^{\infty} \alpha_k \left( \fint_{2^k \cB} H(x,y,U)^{\gamma} \, \rmd \nu \right)^{1/\gamma} \\
			&+ \frac{C [\nu(\cB)]^{\theta}}{\veps^{1/p_* - 1/p'}} \left( \fint_{\cB} F^{p_*} \, \rmd \nu \right)^{1/p_*}\,, \\
		\end{split}
	\end{equation}
	where $\gamma := \frac{\eta}{p-1} =  p' \cdot  \frac{n+\veps p}{n+sp+\veps p} < p'$ and
	$
	\theta := \frac{s-\veps(p-1)}{n+\veps p}
	$.
\end{corollary}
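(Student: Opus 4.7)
\medskip

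\noindent\textbf{Proof plan.} The natural strategy is to raise the inequality \eqref{eq:RevHolder} of Proposition \ref{prop:RevHolder} to the power $p-1$ and reinterpret everything in terms of $H = G^{(p-1)/p}$. The exponents line up: $H^{p'} = G$, and $\gamma := \eta/(p-1)$ so that $1/\gamma = (p-1)/\eta$ and $1/p' = (p-1)/p$. Thus
\[
\Vparen{\fint_{\frac{1}{4}\cB} G \, \rmd \nu}^{1/p} = \Vparen{\fint_{\frac{1}{4}\cB} H^{p'} \, \rmd \nu}^{1/(p \cdot p'/(p-1))},
\]
and raising the left-hand side of \eqref{eq:RevHolder} to the $(p-1)$ power produces exactly $(\fint H^{p'} \rmd\nu)^{1/p'}$.

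Since $p \geq 2$, the elementary inequality $(a_1 + a_2 + a_3)^{p-1} \leq C_p (a_1^{p-1} + a_2^{p-1} + a_3^{p-1})$ distributes the exponent across the three summands on the right-hand side of \eqref{eq:RevHolder}. For the two ``$U^{\eta}$'' terms, $(\fint U^{\eta} \, \rmd \nu)^{(p-1)/\eta} = (\fint U^{\eta} \, \rmd \nu)^{1/\gamma}$, and one bounds this from above by $(\fint H^{\gamma} \, \rmd \nu)^{1/\gamma}$ using the pointwise inequality $H^{\gamma} = G^{\eta/p} \geq U^{\eta}$ (since $G \geq U^p$). The $\veps$-powers transform cleanly: $((\veps^{1/\eta - 1/p})^{-1})^{p-1} = \veps^{-(1/\gamma - 1/p')}$. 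For the $F$-term, raising to $p-1$ simply clears the $1/(p-1)$ exponents and yields $\veps^{\theta} [\nu(\cB)]^{\theta} \leq [\nu(\cB)]^{\theta}$ (using $\veps < 1$), producing the desired last term of \eqref{eq:RevHolderFinal}.

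The only delicate point is the infinite series. Naively $((\sum_k \alpha_k a_k)^{p-1}$ does not decompose term-by-term, but since $\sum_{k \geq 0} \alpha_k < \infty$ by the discussion following \eqref{eq:DefnOfGeometricSeries}, one can apply Jensen's inequality with the probability weights $\mu_k := \alpha_k/(\sum_j \alpha_j)$ and the convex function $t \mapsto t^{p-1}$ (valid because $p \geq 2$) to obtain
\[
\Vparen{\sum_k \alpha_k a_k}^{p-1} \leq \Vparen{\sum_j \alpha_j}^{p-2} \sum_k \alpha_k a_k^{p-1}.
\]
Absorbing $(\sum_j \alpha_j)^{p-2}$ into $C$ (it depends only on $\texttt{data}$) gives a bona fide series of the required form $\sum_k \alpha_k (\fint_{2^k \cB} H^{\gamma} \, \rmd\nu)^{1/\gamma}$. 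This is the step I expect to be the mild technical hurdle; all other manipulations are bookkeeping on exponents.

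Finally, raising \eqref{eq:RevHolder} to $p-1$ produces the prefactors $\sigma^{-(p-1)}$ and $\sigma^{p-1}$ on the first two series-of-terms, whereas \eqref{eq:RevHolderFinal} has $\sigma^{-1}$ and $\sigma$. A reparametrization $\sigma \mapsto \sigma^{1/(p-1)}$ (which preserves the range $(0,1)$ and the arbitrariness of $\sigma$) converts the former into the latter, completing the derivation.
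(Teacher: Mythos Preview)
Your proposal is correct and is precisely the intended derivation: the paper states this result as an immediate corollary of Proposition~\ref{prop:RevHolder} without writing out a separate proof, and the manipulation you describe---raising \eqref{eq:RevHolder} to the power $p-1$, using $H^{p'}=G$, $U^{\eta}\le H^{\gamma}$ pointwise, Jensen on the convergent series $\sum_k\alpha_k$, and the reparametrization $\sigma\mapsto\sigma^{1/(p-1)}$---is exactly what is needed. The only point worth recording is that the bound $\big(\sum_k\alpha_k\big)^{p-2}\le C(\texttt{data})$ independent of $\veps$ follows from the computation in \eqref{eq:LevelSetProof:13}, which uses the hypothesis $\veps<s\big(\tfrac{tq}{sp}-\tfrac{1}{p'}\big)$.
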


\begin{remark}
	If $a \equiv 0$ one can see from careful inspection of the proofs they need not assume $u \in L^{\infty}(\bbR^n)$ in Proposition \ref{prop:RevHolder} and Corollary \ref{cor:RevHolderFinal}.
\end{remark}

\section{Proof of the Level Set Estimate}\label{sec:LevelSetEstimate}

We are now ready to state and prove the level set estimate. It is stated precisely in Proposition \ref{prop:LevelSetEstimate} below, and this section is devoted to its proof. We first define the following: using the notation of Corollary \ref{cor:RevHolderFinal}, for any $x_0 \in \bbR^n$ and $R > 0$ set
\begin{equation}\label{eq:DefnOfTheta}
	\Theta(x_0,R) := \Upsilon_0(x_0, R) + Tail(x_0, R) + \Psi_1(x_0,R)\,,
\end{equation}
where
\begin{equation}\label{eq:DefinitionOfTails1}
	\begin{split}
		\Upsilon_0(x_0,R) &:= \left( \fint_{\cB(x_0,R)} F^{p_*+\delta_f} \, \rmd \nu \right)^{1/(p_* + \delta_f)}\,, \text{ with } \delta_f \in (0,\delta_0) \text{ to be determined,} \\
		Tail(x_0,R) &:= \sum_{k=0}^{\infty} 2^{-k(\frac{tq}{p-1}-s-\veps)} \left( \fint_{\cB(x_0,2^k R)} H^{\gamma} \, \rmd \nu \right)^{1/\gamma}\,,
	\end{split}
\end{equation}
and, for any constant $M \geq 1$,
\begin{equation}\label{eq:DefinitionOfTails2}
	\Psi_M(x_0,R) := \left( \fint_{\cB(x_0,R)} H^{p'} \, \rmd \nu \right)^{1/p'} + M \frac{[\nu(\cB(x_0,R))]^{\theta}}{\veps^{1/p_* - 1/p'}} \left( \fint_{\cB(x_0,R)} F^{p_*} \, \rmd \nu \right)^{1/p_*}\,;
\end{equation}
we write $\Psi_M$ with $M=1$ as $\Psi_1$.

\begin{proposition}\label{prop:LevelSetEstimate}
Assume \eqref{Assumption:Intro:Coeff}, \eqref{Assumption:Intro:Exp}, and \eqref{Assumption:Intro:Exp2}. Assume that $\veps > 0$ satisfies
\begin{equation}\label{Assumption:Epsilon}
	\veps \in (0,s/p)\,, \qquad \veps < s \left( \frac{tq}{sp} -\frac{1}{p'} \right)\,, \qquad \veps < 1-s\,. \tag{A4}
\end{equation} 
Let $u \in W^{s,p}(\bbR^n) \cap L^{\infty}(\bbR^n)$ be a bounded weak solution to \eqref{eq:Intro:MainEqn}, and let $f \in L^{p_* + \delta_0}(\bbR^n)$ for given $\delta_0 > 0$. Let $U$ and $F$ be as in \eqref{eq:DefnOfUandF}.
Let $\cB(x_0,\varrho_0) \subset \bbR^{2n}$ with $0 < \varrho_0 \leq 1$, and let $\alpha$ and $\beta$ be such that $\varrho_0 < \beta < \alpha < \frac{3}{2} \varrho_0$ so that we have
\begin{equation*}
\cB(x_0,\varrho_0) \subset \cB(x_0,\beta) \subset \cB(x_0,\alpha) \subset \cB \big(x_0,\frac{3}{2} \varrho_0 \big)\,.
\end{equation*}
Then there exist constants $C_{\alpha} = C_{\alpha}(\texttt{data}) >0$, $C_f = C_f(\texttt{data},\veps) \geq 1$ and $\kappa_f = \kappa_f(\texttt{data},\veps) \in (0,1)$, with positive constants
\begin{equation}\label{eq:DefnOfLevelSetConstants}
\vartheta :=
\frac{3(p'-\gamma)}{\gamma}\,,
	\qquad
\vartheta_f := 
(p_* + \delta_f) \left( \frac{p_* \theta}{1-p_* \theta} \right)\,,
	\qquad 
\widetilde{\vartheta}_f := 
\frac{p_*(1+\theta \delta_f)}{1-p_* \theta}\,,
\end{equation}
such that
\begin{equation}\label{eq:LevelSetEstimate}
\frac{1}{\lambda^{p'}} \intdm{\cB(x_0,\beta) \cap \{ H > \lambda \} }{H^{p'}}{\nu} \leq \frac{C_{\alpha}}{\veps^{\vartheta} \lambda^{\gamma}} \intdm{\cB(x_0,\alpha) \cap \{ H > \lambda \} }{H^{\gamma}}{\nu} + \frac{C_f \lambda_0^{\vartheta_f}}{\lambda^{\widetilde{\vartheta}_f}} \int_{\cB(x_0,\alpha) \cap \{ F > \kappa_f \lambda \} } F^{p_*} \, \rmd \nu
\end{equation}
for every $\lambda \geq \lambda_0$, where $\lambda_0$ is defined as
\begin{equation}\label{eq:DefnOfLambda0}
\lambda_0 := \frac{C_a}{\veps} \left( \frac{\varrho_0}{\alpha-\beta} \right)^{2n+p} \Theta(x_0,2 \varrho_0)\,,
\end{equation}
and where $C_a = C_a(\texttt{data})$ and $\Theta$ has been defined in \eqref{eq:DefnOfTheta}. (see also \eqref{eq:DefinitionOfTails1} and \eqref{eq:DefinitionOfTails2}).

\end{proposition}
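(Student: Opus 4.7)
The plan is to implement the exit-time and Vitali covering scheme of Kuusi, Mingione and Sire \cite{kuusi2015}, applied to the dual pair $(H^{p'}, \nu)$ and driven by the scale-invariant reverse H\"older inequality of Corollary \ref{cor:RevHolderFinal}. The auxiliary parameter $\sigma$ in \eqref{eq:RevHolderFinal} will be fixed once and for all, depending only on $\texttt{data}$, so as to balance the local and nonlocal contributions through a standard Giaquinta--Giusti type absorption. Because \eqref{eq:RevHolderFinal} holds only for diagonal balls in $\bbR^{2n}$, every covering ball constructed below will be a diagonal ball.

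Fix $\lambda \geq \lambda_0$. For each diagonal center $x \in B(x_0, \beta)$, introduce the test functional $E(x;r) := \Psi_1(x,r) + Tail(x,r) + \Upsilon_0(x,r)$, continuous in $r$. The choice of $\lambda_0$ in \eqref{eq:DefnOfLambda0}, together with the doubling property \eqref{eq:MeasureDoublingProperty}, is designed precisely so that $E(x; \alpha-\beta) < \lambda$; the factor $(\varrho_0/(\alpha-\beta))^{2n+p}$ absorbs the measure ratio $\nu(\cB(x_0,2\varrho_0))/\nu(\cB(x, \alpha-\beta))$ after taking $1/p'$- and $1/p_*$-th powers. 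On the other hand, Lebesgue differentiation on the doubling metric measure space $(\bbR^{2n}, \nu)$ yields $\limsup_{r \to 0^+} E(x_c;r) \geq H(x,y,U) > \lambda$ at $\nu$-a.e.\ $(x,y)$ in the level set, for an appropriate diagonal center $x_c$ at distance $O(r)$ from $(x,y)$. Hence a largest stopping radius $r_i \in (0, \alpha-\beta)$ with $E(x_c; r_i) = \lambda$ exists for each such point. A standard $5r$-Vitali extraction produces a disjoint countable subfamily $\{\cB_i\}$ with $\{5\cB_i\}$ covering the level set (mod $\nu$-null) and $5\cB_i \subset \cB(x_0, \alpha)$. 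On each $\cB_i$, Corollary \ref{cor:RevHolderFinal} combined with $E(\cdot; r_i) = \lambda$ yields $\int_{\cB_i} H^{p'} \, \rmd \nu \leq C \lambda^{p'} \nu(\cB_i)$, reducing matters to a controlled estimate on $\sum_i \nu(\cB_i)$.

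Since the equality $E_i = \lambda$ forces one of $\Psi_1, Tail, \Upsilon_0$ to be at least $\lambda/3$ on each $\cB_i$, the index set splits into three subfamilies. For the $\Psi_1$-subfamily, Jensen's inequality gives $\lambda^\gamma \nu(\cB_i) \leq C \int_{\cB_i} H^\gamma \, \rmd \nu$, and splitting this at level $\kappa \lambda$ for a small $\kappa = \kappa(\texttt{data})$ and absorbing the sub-level contribution into the left-hand side (using $H^{p'-\gamma} \leq (\kappa\lambda)^{p'-\gamma}$ where $H \leq \kappa\lambda$) produces the first right-hand term of \eqref{eq:LevelSetEstimate}; the exponent $\vartheta$ there aggregates the $\veps$-losses from the $\veps^{1/\gamma - 1/p'}$ factors in \eqref{eq:RevHolderFinal} once raised to the $p'$-th power. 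For the $\Upsilon_0$-subfamily, inverting the data bound into $\nu(\cB_i) \leq C \lambda^{-(p_*+\delta_f)} \int_{\cB_i \cap \{F > \kappa_f \lambda\}} F^{p_*+\delta_f} \, \rmd \nu$, and then using $\lambda \geq \lambda_0$ to convert the extraneous factor $\nu(\cB_i)^{p_*\theta}$ into a power of $\lambda_0$, along with H\"older's inequality to return from $F^{p_*+\delta_f}$ to $F^{p_*}$, yields the exponents $\vartheta_f$ and $\widetilde{\vartheta}_f$ of \eqref{eq:DefnOfLevelSetConstants}.

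The main obstacle is the $Tail$-subfamily. Inverting $Tail(\cdot; r_i) \geq c \lambda$ produces contributions from $(\fint_{2^k \cB_i} H^\gamma)^{1/\gamma}$ at all dyadic scales $k \geq 0$, and for $2^k r_i > \alpha-\beta$ the ball $2^k \cB_i$ leaves $\cB(x_0,\alpha)$ entirely, so those far-field terms cannot be placed on the right-hand side of \eqref{eq:LevelSetEstimate}. The device from \cite{kuusi2015, kuusi2014fractional} is to bound each such term by a constant multiple of $Tail(x_0, 2\varrho_0)$ via the inclusion $2^k \cB_i \subset \cB(x_0, 2^{k+1}\varrho_0)$ and the doubling property, so that the entire far-field contribution is dominated by $\lambda_0 \leq \lambda$. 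For $2^k r_i \leq \alpha-\beta$, a convexity argument, the bounded overlap of $\{2^k \cB_i\}$ inside $\cB(x_0,\alpha)$ (itself a consequence of disjointness of $\{\cB_i\}$ and doubling), and the geometric estimate \eqref{eq:GeometricSeriesEstimate} permit swapping the $k$- and $i$-sums and produce a bound by the first right-hand term of \eqref{eq:LevelSetEstimate}. The summability of $\sum_k \alpha_k$ in \eqref{eq:DefnOfGeometricSeries}, guaranteed by the precise range of $\veps$ in \eqref{Assumption:Epsilon}, is what makes the whole scheme converge; summing the three subfamily bounds then yields \eqref{eq:LevelSetEstimate} with the stated constants.
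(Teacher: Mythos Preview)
Your proposal has a genuine structural gap: you attempt to cover the entire level set $\cB(x_0,\beta)\cap\{H>\lambda\}\subset\bbR^{2n}$ using only \emph{diagonal} balls $\cB(x_c,r)=B(x_c,r)\times B(x_c,r)$, invoking Lebesgue differentiation at $\nu$-a.e.\ $(x,y)$ in the level set ``for an appropriate diagonal center $x_c$ at distance $O(r)$ from $(x,y)$.'' But if $(x,y)$ satisfies $|x-y|=d>0$, then for any $r<d/2$ there is \emph{no} diagonal center $x_c$ with $(x,y)\in\cB(x_c,r)$: indeed $|x-x_c|<r$ and $|y-x_c|<r$ would force $|x-y|<2r<d$. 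Hence the asserted Lebesgue differentiation via diagonal balls simply fails at every off-diagonal point of the level set, and your Vitali family cannot cover the set to be estimated. This is not a technical nuisance; it is precisely the obstruction that the paper (following \cite{kuusi2015}) singles out as the main difficulty.

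The paper's proof addresses this by running, in addition to the diagonal Vitali covering you describe, a Calder\'on--Zygmund decomposition of $\cB(x_0,\beta)$ into general dyadic cubes $\cK=K_1\times K_2$ (Section~\ref{subsubsec:CZDecomp}), and then sorting the resulting cubes by their distance to the diagonal. Nearly-diagonal cubes are absorbed into the diagonal Vitali family (Section~\ref{subsubsec:NearDiag}). For genuinely off-diagonal cubes the reverse H\"older inequality \eqref{eq:RevHolderFinal} is unavailable, and the paper proves instead an ``almost-reverse H\"older inequality'' (Lemma~\ref{lma:OffDiagRevHolderIneq}) which holds for arbitrary $u\in W^{s,p}\cap L^\infty$ and produces extra terms supported on the diagonal projections $\Kone,\Ktwo$. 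Controlling the sum of these projection terms over all off-diagonal cubes then requires a further good/bad splitting and a combinatorial counting argument (Lemmas~\ref{lma:CombinatorialLemma}--\ref{lma:HardOffDiagSum}). None of this machinery appears in your sketch, and without it the estimate \eqref{eq:LevelSetEstimate} cannot be closed.
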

For $p=2$, this proposition is proved in Section 5 of \cite{kuusi2015}. Our proof is essentially the same as the proof found in \cite{kuusi2015}. 
However, we are writing the proof to make sure that the choice of the other parameters in \eqref{eq:DefnOfLevelSetConstants} are correctly made and to emphasize the robustness of the arguments in \cite{kuusi2015} and how they can be used for more general nonlinear operators. 
As it has been explained in \cite{kuusi2015}, the main difficulty in proving \eqref{eq:LevelSetEstimate} is that the reverse H\"older inequality \eqref{eq:RevHolderFinal} only holds on diagonal balls of the type $\cB(x_0,x_0,R)$. Thus maximal function arguments cannot be used, and we must resort to more direct arguments. 
We use a Calder\'on-Zygmund decomposition to decompose the level set $\{ H > \lambda \}$ into dyadic cubes. These cubes are then sorted into cubes situated on or near the diagonal (called ``diagonal" cubes) and cubes far from the diagonal (called ``off-diagonal" cubes). What is meant by ``far from" will be quantified below. The level set estimate for the diagonal cubes are handled using the reverse H\"older inequality \eqref{eq:RevHolderFinal}. It turns out that Sobolev functions automatically satisfy a type of reverse H\"older inequality on off-diagonal cubes, and we use this to obtain the level set estimate for said cubes.


%

\subsection{Vitali Covering}

Just as in \cite{kuusi2015}, we begin with an exit-time argument. The goal is to cover the portion of the set $\{ H > \lambda \}$ that lies on or near the diagonal $\{(x,x) \, : \, x \in \bbR^n  \} $. Let $\kappa \in (0,1]$ be a constant that will be chosen later, in \eqref{eq:KappaChoice}; all arguments in the paper up until then are independent of the choice of $\kappa$. Define
\begin{equation}\label{eq:LevelSetProof:1}
\lambda_1 := \frac{1}{\kappa} \sup_{\frac{\alpha-\beta}{40^n} \leq R \leq \frac{\varrho_0}{2}} \sup_{x \in B(x_0,\beta)} \{ \Psi_M(x,R) + \Upsilon_0(x,R) + Tail(x,R) \} \,.
\end{equation}
For the same $\kappa$ and for $\lambda \geq \lambda_1$, define the ``diagonal" level set of the functional $\Psi_M$ by
\begin{equation}\label{eq:LevelSetProof:2}
D_{\kappa \lambda} := \left\{ (x,x) \in \cB(x_0,\beta) \, : \, \sup_{0 < R < \frac{\alpha-\beta}{40^n}}  \Psi_M(x,R) > \kappa \lambda \right\}\,.
\end{equation}
Then by definition of $\lambda_1$ we have
\begin{equation}\label{eq:LevelSetProof:3}
\Psi_M(x,R) \leq \kappa \lambda_1 \leq \kappa \lambda\,, \qquad \text{ for each } (x,x) \in \cB(x_0,\beta)\,, \quad R \in \left[ \frac{\alpha-\beta}{40^n}, \frac{\varrho_0}{2} \right]\,,
\end{equation}
and so it follows that for every $(x,x)$ in the diagonal level set $D_{\kappa \lambda}$ there exists an exit time $R(x) \in (0, \frac{\alpha-\beta}{40^n})$ such that 
\begin{equation}\label{eq:LevelSetProof:4}
\Psi_M(x,R(x)) \geq \kappa \lambda \qquad \text{ while at the same time } \sup_{R(x) < R < \frac{\alpha-\beta}{40^n}} \Psi_M(x,R) \leq \kappa \lambda\,.
\end{equation}
Thus the collection $\{ \cB(x,2R(x)) \}$ forms a cover of $D_{\kappa \lambda}$, so by the Vitali covering theorem we can find a countable subcollection $\{ \cB(x_j, 2R(x_j)) \}$ such that
\begin{equation}\label{eq:LevelSetProof:5}
\bigcup_{(x,x) \in D_{\kappa \lambda}} \cB(x,2 R(x)) \subset \bigcup_{j} \cB(x_j,10 R(x_j))\,, \qquad  \cB(x_j,2R(x_j))  \text{ are mutually disjoint }\,.
\end{equation}
We hereafter use the abbreviations
\begin{equation}\label{eq:LevelSetProof:6}
\cB_j := \cB(x_j,R(x_j))\,, \qquad \sigma \cB_j := \cB(x_j, \sigma R(x_j))\,, \quad \sigma > 0\,.
\end{equation}
The quantity $\sum \nu(\cB_j)$ is treated by the diagonal estimates in the next section.
Note that since $x_j \in \cB(x_0,\beta)$ and $R(x_j) \leq \frac{\alpha-\beta}{40^n}$ we have $10 \cB_j \subset \cB(x_0,\alpha)$ for every $j$. By \eqref{eq:LevelSetProof:4} and by the doubling property of the measure $\nu$ in \eqref{eq:MeasureDoublingProperty} we also have
\begin{equation}\label{eq:LevelSetProof:7}
\sum_{j} \intdm{10\cB_j}{H^{p'}}{\nu} \leq \sum_j \nu( 10\cB_j) [\Psi_M(x_j, 10R(x_j))]^{p'} \leq 10^{n+\veps p} \kappa^{p'} \lambda^{p'} \sum_j \nu( \cB_j)\,.
\end{equation}

\subsection{Analysis On the Diagonal}
By \eqref{eq:LevelSetProof:4} it follows that at least one of two inequalities hold: either
\begin{equation}\label{eq:LevelSetProof:8}
\left( \fint_{\cB_j} H^{p'} \, \rmd \nu \right)^{1/p} \geq \frac{\kappa \lambda}{2}
\end{equation}
or
\begin{equation}\label{eq:LevelSetProof:9}
\frac{M [\nu(\cB_j)]^{\theta}}{\veps^{1/p_* - 1/p'}} \left( \fint_{\cB_j} F^{p_*} \, \rmd \nu \right)^{1/p_*} \geq \frac{\kappa \lambda}{2}\,.
\end{equation}
\underline{Case 1:} If \eqref{eq:LevelSetProof:8} occurs, then by the Reverse H\"older inequality \eqref{eq:RevHolderFinal}
\begin{equation}\label{eq:LevelSetProof:10}
\kappa \lambda \leq \frac{C}{\sigma \veps^{1/\gamma - 1/p'}} \left( \fint_{4 \cB_j} H^{\gamma} \, \rmd \nu \right)^{1/\gamma} + \frac{\sigma}{\veps^{1/\gamma - 1/p'}} \sum_{k=0}^{\infty} \alpha_k \left( \fint_{2^{k+2} \cB_j} H^{\gamma} \, \rmd \nu \right)^{1/\gamma} + \frac{C [\nu(\cB_j)]^{\theta}}{\veps^{1/p_* - 1/p'}} \left( \fint_{4 \cB_j} F^{p_*} \, \rmd \nu \right)^{1/p_*} \,,
\end{equation}
where $\sigma \in (0,1]$ has yet to be chosen and $C = C(\texttt{data})$. Choose the unique $m \in \bbZ_+$ such that $2^{-m} \varrho_0\leq R(x_j) < 2^{-m+1} \varrho_0$. Since $R(x_j) < \frac{\alpha-\beta}{40^n}$ and $0<\alpha-\beta<\varrho_0/2
$, we have $m \geq 3$. Further, $\frac{\alpha-\beta}{40^n} \leq \frac{\varrho_0}{2 \cdot 40^n} \leq 2^{m-1} R(x_j)$, so by \eqref{eq:LevelSetProof:1}
\begin{equation}\label{eq:LevelSetProof:11}
\Upsilon_0(x_j,2^{m-1}R(x_j)) + Tail(x_j,2^{m-1}R(x_j)) \leq \kappa \lambda_1\,.
\end{equation}
This allows us to estimate $Tail$. The first $m-2$ terms can be handled by the exit-time condition \eqref{eq:LevelSetProof:4}; that is,
\begin{equation}\label{eq:LevelSetProof:12}
\left( \fint_{2^k \cB_j} H^{\gamma} \, \rmd \nu  \right)^{1/\gamma}  \leq \kappa \lambda \quad \text{ if } 1 \leq k \leq m-2\,.
\end{equation}
Then by \eqref{eq:LevelSetProof:11} and \eqref{eq:LevelSetProof:12}, and recalling that $\alpha_k = 2^{-k(\frac{tq}{p-1} - s - \veps)}$,
\begin{equation}\label{eq:LevelSetProof:13}
\begin{split}
\sum_{k=0}^{\infty} \alpha_k \left( \fint_{2^{k+2}\cB_j} H^{\gamma} \, \rmd \nu \right)^{1/\gamma} &= \frac{1}{\alpha_2} \sum_{k=2}^{\infty} \alpha_k \left( \fint_{2^{k}\cB_j} H^{\gamma} \, \rmd \nu \right)^{1/\gamma}  \\
	&= \frac{1}{\alpha_2} \sum_{k=2}^{m-2} \alpha_k \left( \fint_{2^{k}\cB_j} H^{\gamma} \, \rmd \nu \right)^{1/\gamma} + \frac{1}{\alpha_2} \sum_{k=0}^{\infty} \alpha_{k+m-1} \left( \fint_{2^{k+m-1}\cB_j} H^{\gamma} \, \rmd \nu \right)^{1/\gamma} \\
	&\leq \frac{1}{\alpha_2} \left[ \kappa \lambda \sum_{k=2}^{m-2} \alpha_k + \alpha_{m-1} Tail(x_j, 2^{m-1} R(x_j)) \right] \\
	&\leq \frac{1}{\alpha_2} \left[ \kappa \lambda \sum_{k=2}^{m-2} \alpha_k + \alpha_{m-1} \kappa \lambda_1 \right] \\
	&\leq \kappa \lambda \sum_{k=0}^{\infty} \alpha_k   \leq \frac{4^{\frac{tq}{p-1}-s-\veps} \kappa \lambda}{(\frac{tq}{p-1} - s - \veps) \ln(2) } \leq \frac{4^q \kappa \lambda}{s(p'-1)(\frac{tq}{sp} - \frac{1}{p'} ) \ln(2) } := C_1 \kappa \lambda\,,
\end{split}
\end{equation}
where in the last line we used \eqref{eq:GeometricSeriesEstimate} and the bound $ \veps < s (\frac{tq}{sp} - \frac{1}{p'} )$ in \eqref{Assumption:Epsilon}. The constant $C_1$ depends only on $\texttt{data}$.
%
Now using the fact that $m \geq 3$ we gain that $2 R(x_j) \leq \frac{1}{2} \varrho_0$, so by the exit-time condition \eqref{eq:LevelSetProof:4}
\begin{equation}\label{eq:LevelSetProof:14}
\frac{C [\nu(\cB_j)]^{\theta}}{\veps^{1/p_*-1/p'}} \left( \fint_{4 \cB_j} F^{p_*} \, \rmd \nu \right)^{1/p_*} \leq C \frac{\Psi_M(x_j,2 R(x_j))}{M} \leq \frac{C_2 \kappa \lambda}{M}\,, \quad C_2 = C_2(\texttt{data})\,.
\end{equation}
Combining \eqref{eq:LevelSetProof:13} and \eqref{eq:LevelSetProof:14} in \eqref{eq:LevelSetProof:10} gives
\begin{equation}\label{eq:LevelSetProof:15}
\kappa \lambda \leq \frac{C}{\sigma \veps^{1/\gamma - 1/p'} } \left( \fint_{4 \cB_j} H^{\gamma} \, \rmd \nu \right)^{1/\gamma} + \frac{C_1 \sigma \kappa \lambda}{\veps^{1/\gamma - 1/p'}}  + \frac{C_2 \kappa \lambda}{M}\,.
\end{equation}
Now we choose $\sigma \in (0,1)$ and $M \geq 1$, and absorb the last two terms. We set
\begin{equation}\label{eq:LevelSetProof:16}
\sigma := \frac{\veps^{1/\gamma -  1/p'}}{4C_1}\,, \qquad M := 4 C_2 \,,
\end{equation}
and so we have for $C=C(\texttt{data})$
\begin{equation}\label{eq:LevelSetProof:17}
\kappa \lambda \leq \frac{C}{\veps^{2/\gamma - 2/p'}} \left( \fint_{4\cB_j} H^{\gamma} \, \rmd \nu \right)^{1/\gamma} \quad \Rightarrow \quad \nu(\cB_j) \leq \frac{C}{\veps^{2- (2\gamma)/p'} (\kappa \lambda)^{\gamma}} \int_{4 \cB_j} H^{\gamma} \, \rmd \nu\,.
\end{equation}
Now, let $\widetilde{\kappa} > 0$ be a constant, to be fixed in a moment. Using the doubling property \eqref{eq:MeasureDoublingProperty},
\begin{equation}\label{eq:LevelSetProof:18}
\begin{split}
\frac{C}{\veps^{2- (2\gamma)/p'} (\kappa \lambda)^{\gamma}} \int_{4 \cB_j} H^{\gamma} \, \rmd \nu 
	&\leq \frac{C}{\veps^{2- (2\gamma)/p'} (\kappa \lambda)^{\gamma}} \int_{4 \cB_j \cap \{ H \leq \widetilde{\kappa} \kappa \lambda \} } H^{\gamma} \, \rmd \nu  + \frac{C}{\veps^{2- (2\gamma)/p'} (\kappa \lambda)^{\gamma}} \int_{4 \cB_j \cap \{ H > \widetilde{\kappa} \kappa \lambda \} } H^{\gamma} \, \rmd \nu \\
	&\leq \frac{C_3 \nu(\cB_j) \widetilde{\kappa}^{\gamma}}{\veps^{2- (2\gamma)/p'} } + \frac{C_3}{\veps^{2- (2\gamma)/p'} (\kappa \lambda)^{\gamma}} \int_{4 \cB_j \cap \{ H > \widetilde{\kappa} \kappa \lambda \} } H^{\gamma} \, \rmd \nu\,,
\end{split}
\end{equation}
where $C_3 = C_3(\texttt{data})$. Choose
\begin{equation}\label{eq:LevelSetProof:19}
\widetilde{\kappa} = \frac{\veps^{2/\gamma -2/p'}}{(2 C_3)^{1/\gamma}}\,,
\end{equation}
and then substituting \eqref{eq:LevelSetProof:18} into \eqref{eq:LevelSetProof:17}$_2$ and absorbing the term gives
\begin{equation}\label{eq:LevelSetProof:20}
\nu(\cB_j) \leq \frac{C_4}{\veps^{2 - 2\gamma/p'} \kappa^{\gamma} \lambda^{\gamma}} \int_{4 \cB_j \cap \{ \widetilde{\kappa} \kappa \lambda \} } H^{\gamma} \, \rmd \nu\,, \qquad C_4 = C_4(\texttt{data})\,.
\end{equation}
\underline{Case 2:} If \eqref{eq:LevelSetProof:9} occurs then
\begin{equation*}
\left( \frac{\kappa \lambda}{2} \right)^{p_*} \leq \frac{M^{p_*} [\nu(\cB_j) ]^{ p_* \theta - 1}}{\veps^{1-p_* / p'}} \intdm{\cB_j}{F^{p_*}}{\nu}\,,
\end{equation*}
which implies
\begin{equation}\label{eq:LevelSetProof:21}
\nu(\cB_j) \leq \left( \frac{2M}{\veps^{1/p_*-1/p'} \kappa \lambda} \right)^{p_*/(1-p_* \theta)} \left( \intdm{\cB_j}{F^{p_*}}{\nu} \right)^{1/(1-p_* \theta)}\,
\end{equation}
where we used the inequality $p_* \theta < 1$ which follows from the definition of $\theta$ that $p_* \theta \leq \frac{nsp}{(n+sp)(n+\veps p)} < \frac{p}{p+1}$. 
Moreover, since $p\geq 2,$ we have that $3\leq {1\over 1-p_* \theta} = 1+ {p_* \theta\over 1-p_* \theta}$. This decomposition of the exponent will allow us to  incorporate the level set of $F$ in the integrand, and then remove the exponent on the integral. To that end, with a constant $\wh{\kappa} \in (0,1)$ to be determined, we split the integral as
\begin{equation}\label{eq:LevelSetProof:22}
\left( \intdm{\cB_j}{F^{p_*}}{\nu} \right)^{{1 \over 1-p_* \theta }} \leq \left( \intdm{\cB_j \cap \{ F > \wh{\kappa} \kappa \lambda \} }{F^{p_*}}{\nu} + (\wh{\kappa} \kappa \lambda )^{p_*} \nu(\cB_j) \right)^{{1 \over 1-p_* \theta }}\,.
\end{equation}
Now, since $\varrho_0 \leq 1$, 
\begin{equation*}\label{eq:AddTerms:DefnOfH}
\nu(\cB(x_0,2 \varrho_0) \leq \frac{C(n) 2^{n+\veps p}}{\veps} := L = L(n,p,\veps)\,.
\end{equation*}
Then by noting that $\cB_j \subset \cB(x_0,2 \varrho_0)$ we can estimate 
\begin{equation*}\label{eq:LevelSetProof:23}
[\nu(\cB_j)]^{{1 \over 1-p_* \theta }} \leq [\nu(\cB(x_0,2\varrho_0))]^{{p_* \theta \over 1-p_* \theta }}  \nu(\cB_j) \leq L^{{p_* \theta \over 1-p_* \theta }} \nu(\cB_j)\,.
\end{equation*}
Then using the elementary inequality $(a+b)^r \leq 2^{r-1}(a^r + b^r)$ for any $r \geq 1$ in \eqref{eq:LevelSetProof:22} and using the estimate above for $[\nu(\cB_j)]^{1/(1-p_* \theta )} $ we have 
\begin{equation}\label{eq:LevelSetProof:24}
\begin{split}
\left( \intdm{\cB_j}{F^{p_*}}{\nu} \right)^{{1\over 1-p_* \theta }} &\leq 2^{p_* \theta  / (1- p_*\theta ) }\left( \intdm{\cB_j \cap \{ F > \wh{\kappa} \kappa \lambda \} }{F^{p_*}}{\nu} \right)^{{1\over 1-p_* \theta }}\\
& + ( 2(L+1) )^{{p_* \theta \over 1-p_* \theta }} \left( \wh{\kappa} \kappa \lambda \right)^{{p_* \theta \over 1-p_* \theta }} \nu(\cB_j)\,.
\end{split}
\end{equation}
Combining \eqref{eq:LevelSetProof:21} and \eqref{eq:LevelSetProof:24}, and using that $\theta \leq 1$,
\begin{equation*}
\nu(\cB_j) \leq \left( \frac{2^2 M}{\veps^{1/p_* - 1/p'} \kappa \lambda} \right)^{p_*/(1 - p_* \theta )}  \left( \intdm{\cB_j \cap \{ F > \wh{\kappa} \kappa \lambda \} }{F^{p_*}}{\nu} \right)^{1/(1-p_* \theta )} + \left( \frac{2^{2} M (L+1) \wh{\kappa}}{\veps^{1/p_* - 1/p'}} \right)^{p_*/(1-\theta p_*)} \nu(\cB_j)\,.
\end{equation*}
Now set $\wh{\kappa} \in (0,1)$ to satisfy
\begin{equation}\label{eq:LevelSetProof:25}
\left( \frac{4 M (L+1) \wh{\kappa}}{\veps^{1/p_* - 1/p'}} \right)^{p_*/(1-  p_* \theta)} \leq  \frac{1}{2} \quad \Rightarrow \quad \wh{\kappa} \leq \left( \frac{1}{2} \right)^{(1-p_* \theta )/p_*} \frac{\veps^{1/p_* - 1/p'}}{4 M (L+1)}\,.
\end{equation}
Then
\begin{equation*}
\nu(\cB_j) \leq 2 \left( \frac{4 M}{\veps^{1/p_* - 1/p'} \kappa \lambda} \right)^{{p_*\over 1-p_* \theta }}  \left( \intdm{\cB_j \cap \{ F > \wh{\kappa} \kappa \lambda \} }{F^{p_*}}{\nu} \right)^{{1\over 1-p_* \theta }}\,.
\end{equation*}
Writing $\left( \intdm{\cB_j \cap \{ F > \wh{\kappa} \kappa \lambda \} }{F^{p_*}}{\nu} \right)^{{1\over 1-p_* \theta }} = \left( \intdm{\cB_j \cap \{ F > \wh{\kappa} \kappa \lambda \} }{F^{p_*}}{\nu} \right) \left( \intdm{\cB_j \cap \{ F > \wh{\kappa} \kappa \lambda \} }{F^{p_*}}{\nu} \right)^{{p_* \theta \over 1-p_* \theta }}$ let us transfer some of the ``decay" from the integrand to the cutoff $\lambda_0$, and in so doing remove the exponent from the integral.  
\begin{equation*}
\begin{split}
\intdm{\cB_j \cap \{ F > \wh{\kappa} \kappa \lambda \} }{F^{p_*}}{\nu} &\leq  (\wh{\kappa} \kappa \lambda)^{p_*} \intdm{\cB_j \cap \{ F > \wh{\kappa} \kappa \lambda \} }{\left( \frac{F}{\wh{\kappa} \kappa \lambda} \right)^{p_* + \delta_f}}{\nu} \\
	&\leq \frac{\nu(2^{m-1}\cB_j)}{(\wh{\kappa} \kappa \lambda)^{\delta_f}} \fint_{2^{m-1}\cB_j} F^{p_*+\delta_f} \, \rmd \nu \\
	&\leq \frac{\nu(\cB(x_0,2\varrho_0))}{(\wh{\kappa} \kappa \lambda)^{\delta_f}} [\Upsilon_0(x_j,2^{m-1}R(x_j))]^{p_*+\delta_f} \leq \frac{L \lambda_1^{p_*+\delta_f}}{(\wh{\kappa} \kappa \lambda)^{\delta_f}}\,.
\end{split}
\end{equation*}
Therefore, powering the above inequality by ${p_* \theta \over 1-p_* \theta }$ on both sides we obtain that 
\begin{equation}\label{eq:LevelSetProof:26:a}
\nu(\cB_j) \leq \frac{C_{5} \lambda_1^{(p_*+\delta_f)\theta p_* /(1-p_* \theta )}}{(\wh{\kappa} \kappa \lambda)^{(1+\theta \delta_f) p_* / (1- p_* \theta )}} \intdm{\cB_j \cap \{ F > \wh{\kappa} \kappa \lambda \} }{F^{p_*}}{\nu}\,,
\end{equation}
where
\begin{equation}\label{eq:LevelSetProof:27}
C_{5} := 2 \left( \frac{4 M (L+1)}{\veps^{1/p_* - 1/p'}} \right)^{p_*/(1-p_* \theta)}\,.
\end{equation}
Combining \eqref{eq:LevelSetProof:20} and \eqref{eq:LevelSetProof:26:a},
\begin{equation*}
\nu(\cB_j) \leq \frac{C_4}{\veps^{2-2\gamma/p'} \kappa^{\gamma} \lambda^{\gamma}} \intdm{4 \cB_j \cap \{ H > \widetilde{\kappa} \kappa \lambda \} }{H^{\gamma}}{\nu}
	+ \frac{C_5 \lambda_1^{\vartheta_f}}{(\wh{\kappa} \kappa \lambda)^{\wt{\vartheta}_f}} \intdm{\cB_j \cap \{ F > \wh{\kappa} \kappa \lambda \} }{F^{p_*}}{\nu}\,,
\end{equation*}
and since $\{ \cB_j \}$ is a disjoint collection whose members are all contained in $\cB(x_0,\alpha)$ we have
\begin{equation}\label{eq:LevelSetProof:28}
\sum_j \nu(\cB_j) \leq \frac{C_4}{\veps^{2-2 \gamma/p'} \kappa^{\gamma} \lambda^{\gamma}} \intdm{ \cB(x_0,\alpha) \cap \{ H > \widetilde{\kappa} \kappa \lambda \} }{H^{\gamma}}{\nu}
	+ \frac{C_5 \lambda_1^{\vartheta_f}}{(\wh{\kappa} \kappa \lambda)^{\widetilde{\vartheta}_f}} \intdm{\cB(x_0,\alpha) \cap \{ F > \wh{\kappa} \kappa \lambda \} }{F^{p_*}}{\nu}\,,
\end{equation}
where the constants $C_4 = C_4(\texttt{data})$, $C_5(\texttt{data},\veps)$, $\widetilde{\kappa}$ and $\wh{\kappa}$ have been chosen to satisfy \eqref{eq:LevelSetProof:20}, \eqref{eq:LevelSetProof:27}, \eqref{eq:LevelSetProof:19} and \eqref{eq:LevelSetProof:25} respectively. The constant $\kappa \in (0,1]$ defined in \eqref{eq:KappaChoice} will be fixed in the course of the off-diagonal estimates below.

\subsection{Analysis Off the Diagonal}\label{subsec:OffDiag}
The analysis far from the diagonal is much more technical. We begin with defining the collections of dyadic cubes and summarizing their properties that will be used repeatedly. We then define two dimensional constants that will be used throughout the analysis of cubes far from the diagonal. In Section \ref{subsubsec:CZDecomp} we recall the classical Calder\'on-Zygmund decomposition and adapt it to decompose the level set $\{ H > \lambda \}$ into the aforementioned dyadic cubes. In Section \ref{subsubsec:NearDiag} we analyze the cubes near the diagonal and show they can be covered by the collection $\{ \cB_j \}$. In the remaining sections we treat the cubes far from the diagonal using the ``almost-reverse" H\"older inequality in Lemma \ref{lma:OffDiagRevHolderIneq}, a careful choice of the constant $\kappa$, and combinatorial information about the cubes coming from their size and distance from the diagonal.

\subsubsection{Dyadic Cubes}
The following contains information regarding cubes arising from a Calder\'on-Zygmund decomposition of Euclidean space. The dyadic cubes considered here are centered at $x_0$ and their size has been changed so as to be compatible with the starting ball $\cB(x_0,\beta)$. We will consider cubes with side length $2^{-k}$ for integers $k \geq k_0$, where
\begin{equation}\label{eq:MinSideLength}
k_0 := \left\lfloor -\log_2 \left( \frac{\alpha-\beta}{n 40^{n+1}} \right) \right\rfloor + 1\,.
\end{equation}
Here, $\lfloor \cdot \rfloor$ denotes the floor function. For each $k \geq k_0$, let $\cC_k$ be the disjoint collection of half-open cubes in $\bbR^n$ centered at $x_0$ with side length $2^{-k}$ whose closures intersect $\overline{B}(x_0,\frac{1}{2}(\alpha-\beta))$.
That is,
\begin{equation*}
\cC_k := \{ x_0 + 2^{-k}z + [0,2^{-k})^n \, : z \in \bbZ^n\,, \, \, (x_0 + 2^{-k}z + [0,2^{-k})^n) \cap \overline{B}(x_0,\frac{1}{2}(\alpha-\beta)) \neq \emptyset \}\,.
\end{equation*}
Then by \eqref{eq:MinSideLength} we have 
\begin{equation}\label{eq:LevelSetProof:CubesAreCovering}
B(x_0,\beta) \subset \bigcup_{K \in \cC_k} K \subset B(x_0,\alpha)\,.
\end{equation}
Note that every cube $K \in \cC_{k+1}$ has a unique ``predecessor" $\widetilde{K} \in \cC_k$ such that $K \subset \widetilde{K}$. With these cubes, we can define dyadic cubes on $\bbR^{2n}$, denoted by
\begin{equation}\label{eq:DefnOfCubeCollection}
\Delta_k := \{ \cK := K_1 \times K_2 \, : \, K_1, K_2 \in \cC_k \}\,, \qquad \Delta := \bigcup_{k \geq k_0} \Delta_k\,.
\end{equation}
Denote the diagonal cubes $\Delta_k^d := \{ K \times K \, : \, K \in \Delta_k \}$. Then
\begin{equation*}
\cB(x_0,\beta) \subset \bigcup_{\cK \in \Delta_k} \cK \subset \cB(x_0,\alpha)\,.
\end{equation*}
Note that these product cubes in $\bbR^{2n}$ satisfy all of the same properties as the cubes. Numerous times a cube $\cK \in \Delta$ will be given; there exists $K_1$, $K_2 \in \cC_k$ such that $\cK = K_1 \times K_2$, and we denote
\begin{equation*}
k(\cK) = k\,.
\end{equation*}
We denote the cube projections for a cube $\cK = K_1 \times K_2$ by
\begin{equation*}
\Kone := K_1 \times K_1\,, \qquad \text{ and } \qquad \Ktwo := K_2 \times K_2\,.
\end{equation*}

\begin{proposition}\label{prop:Cubes}
Let $\cK = K_1 \times K_2 \in \Delta$. The following hold:
\begin{itemize}
\item $\Kone$, $\Ktwo \in \Delta$.

\item $\nu(\Kone) = \nu(\Ktwo)$ and $k(\cK) = k(\Kone) = k(\Ktwo)$.

\item If $\cH \in \Delta$ and $\cH \subset \cK$, then $k(\cK) \leq k(\cH)$.

\item If $\widetilde{\cK} = \widetilde{K}_1 \times \widetilde{K}_2$ is the predecessor of $\cK$, then 
\begin{equation}\label{eq:MonotonicityOfCubes}
dist (\widetilde{K}_1, \widetilde{K}_2) \leq \dist(K_1, K_2)\,.
\end{equation}

\item The following hold:
\begin{equation}
\dist(K_1,K_2) = \frac{1}{\sqrt{2}} \dist(\Kone,\Ktwo)\,.
\end{equation}
\begin{equation}\label{eq:DistBetweenCubeAndDiag}
\dist(\cK,\mathrm{diag}) = \frac{1}{\sqrt{2}} \dist(K_1,K_2) = \frac{1}{2} \dist(\Kone,\Ktwo) \,.
\end{equation}
Note that for two sets $\cA_1$ and $\cA_2 \subset \bbR^{2n}$ the definition of $\dist(\cA_1, \cA_2)$ uses the traditional Euclidean norm.
\end{itemize}
\end{proposition}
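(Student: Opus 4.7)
The proposition collects five elementary facts about the dyadic product cubes, and my plan is to verify each one in turn, with the real work concentrated in the distance identities of the last bullet. All arguments will be purely combinatorial/geometric, relying only on the definitions \eqref{eq:MinSideLength}-\eqref{eq:DefnOfCubeCollection} and properties of Euclidean distance.

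For the first two bullets: since $\cK = K_1 \times K_2 \in \Delta_k$ means $K_1, K_2 \in \cC_k$, the cubes $\Kone := K_1 \times K_1$ and $\Ktwo := K_2 \times K_2$ lie in $\Delta_k$ by definition, yielding both the membership and $k(\cK) = k(\Kone) = k(\Ktwo)$. For the equality $\nu(\Kone) = \nu(\Ktwo)$, I would invoke the translation invariance of $(x,y) \mapsto |x-y|^{-(n-\veps p)}$ under the diagonal shift $(x,y) \mapsto (x+h, y+h)$: if $K_2 = K_1 + h$, then $\Ktwo = \Kone + (h,h)$, and a direct change of variables in the integral defining $\nu$ gives the equality. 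The third bullet is immediate from dyadic nesting: if $\cH = H_1 \times H_2 \subset K_1 \times K_2 = \cK$, then $H_i \subset K_i$ forces the side length relation $2^{-k(\cH)} \le 2^{-k(\cK)}$, hence $k(\cK) \le k(\cH)$. The fourth bullet follows at once from $K_i \subset \widetilde{K}_i$, since enlarging both sets can only decrease the distance, so $\dist(\widetilde{K}_1, \widetilde{K}_2) \le \dist(K_1, K_2)$.

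The distance identities are the main content, and I would compute them directly. For the first, use that $\Kone = K_1 \times K_1 \subset \bbR^{2n}$ contains points $(a,b)$ with $a,b \in K_1$, and similarly $\Ktwo$ contains $(c,d)$ with $c,d \in K_2$. Then
\begin{equation*}
\dist(\Kone, \Ktwo)^2 = \inf \{|a-c|^2 + |b-d|^2 : a,b \in K_1,\ c,d \in K_2\} = 2\, \dist(K_1, K_2)^2,
\end{equation*}
since the two terms decouple and each is minimized independently at $\dist(K_1,K_2)^2$. This gives $\dist(K_1,K_2) = \frac{1}{\sqrt{2}} \dist(\Kone,\Ktwo)$. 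For the distance to the diagonal, points of $\cK$ are $(a,b)$ with $a \in K_1,\ b \in K_2$, and points of $\mathrm{diag}$ are $(x,x)$, so
\begin{equation*}
\dist(\cK, \mathrm{diag})^2 = \inf_{a \in K_1,\, b \in K_2}\ \inf_{x \in \bbR^n} \bigl( |a-x|^2 + |b-x|^2 \bigr).
\end{equation*}
For fixed $a,b$ the inner infimum is attained at $x = (a+b)/2$ with value $\tfrac{1}{2}|a-b|^2$, and minimizing over $a,b$ then yields $\tfrac{1}{2}\dist(K_1,K_2)^2$. Combined with the first identity, this gives the chain $\dist(\cK,\mathrm{diag}) = \frac{1}{\sqrt{2}}\dist(K_1,K_2) = \frac{1}{2}\dist(\Kone,\Ktwo)$.

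The only minor obstacle is keeping straight the two different norms on $\bbR^{2n}$ — the max-norm $\Vnorm{\cdot}$ used to define the balls $\cB$, and the Euclidean norm used for $\dist$ as emphasized in the statement — but once one fixes the Euclidean convention for distances between sets, each identity reduces to a one-line inf-computation. No deep tool is needed; this proposition is purely a bookkeeping lemma that organizes the geometric data of the dyadic decomposition for use in the off-diagonal analysis of Section \ref{subsec:OffDiag}.
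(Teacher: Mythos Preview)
Your proposal is correct. The paper states Proposition \ref{prop:Cubes} without proof, treating all five items as elementary facts about dyadic product cubes; your direct verifications---definition-chasing for the first three bullets, set monotonicity of $\dist$ for the fourth, and the decoupled infimum computations for the distance identities---are exactly the intended arguments.
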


\subsubsection{Two Dimensional Constants} In the analysis that follows we will often compare the sizes of the cubes with their distance to the diagonal. This leads us to define two dimensional constants independent of $\veps$ that will be used repeatedly. The existence of these constants follows from geometric arguments.

\begin{lemma}\label{lma:TwoDimensionalConstants}
There exists a constant $C_{dd}$ depending only on $n$ such that
for $h \in \{1,2\}$
\begin{equation}\label{eq:LevelSetProof:Cdd}
C_{dd} \geq \sup_{\cK \in \Delta} \left\{ \frac{1}{\veps} \left( \frac{\dist(K_1,K_2)}{2^{-k}} \right)^{n - \veps p} \frac{\nu(\cK)}{\nu(\Kh)} \right\} + \sup_{\substack{\cK \in \Delta \\ \dist(K_1, K_2) \geq 2^{-k}}} \left\{ \veps \left( \frac{\dist(K_1,K_2)}{2^{-k}} \right)^{\veps p - n} \frac{\nu(\Kh)}{\nu(\cK)} \right\}\,,
\end{equation}
where $k = k(\cK)$.
\end{lemma}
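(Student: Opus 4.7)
The strategy is to reduce \eqref{eq:LevelSetProof:Cdd} to two elementary estimates on integrals over unit cubes via a scaling argument. Set $\ell := 2^{-k}$ and $d := \dist(K_1, K_2)$, and let $K_1', K_2', K_h'$ denote the unit-side cubes obtained by dilating $K_1, K_2, K_h$ by $\ell^{-1}$; write $d' := d/\ell = \dist(K_1', K_2')$. Using the homogeneity of the kernel (under $(x,y) \mapsto (\ell x, \ell y)$, $|x-y|^{-(n-\veps p)}$ picks up $\ell^{-(n-\veps p)}$ and Lebesgue measure contributes $\ell^{2n}$, for a net factor of $\ell^{n+\veps p}$), one obtains the identities
\begin{equation*}
\nu(\cK) = \ell^{n+\veps p}\, I(d'), \qquad \nu(\Kh) = \ell^{n+\veps p}\, J,
\end{equation*}
where $I(d') := \int_{K_1'}\int_{K_2'} |x-y|^{-(n-\veps p)} \,\rmd y\,\rmd x$ and $J := \int_{K_h'}\int_{K_h'} |x-y|^{-(n-\veps p)} \,\rmd y\,\rmd x$. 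The $\ell^{n+\veps p}$ cancels in $\nu(\cK)/\nu(\Kh) = I(d')/J$, and a cube analogue of \eqref{eq:MeasureOfBall} (obtained from the containments $B(x_c, \ell/2) \subset K \subset B(x_c, \tfrac{\sqrt{n}}{2}\ell)$ and monotonicity of $\nu$) yields $J = c(n,p,\veps)/\veps$ with $c$ pinched between two positive dimensional constants.

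For the first supremum, the trivial pointwise bound $|x-y| \geq d'$ for $x \in K_1'$, $y \in K_2'$ gives $I(d') \leq (d')^{-(n-\veps p)}$, whence
\begin{equation*}
\frac{1}{\veps}(d')^{n-\veps p} \frac{\nu(\cK)}{\nu(\Kh)} = \frac{(d')^{n-\veps p}}{\veps} \cdot \frac{I(d')}{J} \leq \frac{1}{\veps} \cdot 1 \cdot \frac{\veps}{c(n,p)} \leq C_n.
\end{equation*}
The edge case $d' = 0$ is harmless: since $n - \veps p > 0$ (as $\veps p < s < n$), the factor $(d')^{n-\veps p}$ vanishes and annihilates the finite ratio $\nu(\cK)/\nu(\Kh)$. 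For the second supremum, the constraint $d \geq 2^{-k}$ forces $d' \geq 1$, and every pair $(x,y) \in K_1' \times K_2'$ then satisfies $|x-y| \leq d' + 2\sqrt{n} \leq (1+2\sqrt{n})d'$ (the first inequality is the diameter bound for two unit cubes at distance $d'$). Hence $|x-y|^{-(n-\veps p)} \geq (1+2\sqrt{n})^{-n}(d')^{-(n-\veps p)}$, integration yields the matching lower bound $I(d') \geq c_n(d')^{-(n-\veps p)}$, and substitution gives
\begin{equation*}
\veps(d')^{\veps p - n}\frac{\nu(\Kh)}{\nu(\cK)} = \veps(d')^{\veps p - n}\cdot\frac{J}{I(d')} \leq \veps(d')^{\veps p - n}\cdot\frac{c(n,p)/\veps}{c_n(d')^{-(n-\veps p)}} \leq C_n.
\end{equation*}

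\textbf{Main obstacle.} There is no deep difficulty once one commits to the scaling reduction; the only conceptual point is recognizing that the weights $1/\veps$ and $(d/2^{-k})^{\pm(n-\veps p)}$ appearing in \eqref{eq:LevelSetProof:Cdd} are calibrated precisely to cancel, respectively, the $1/\veps$ blow-up in $\nu(\Kh)$ coming from the integrable singularity of the kernel near the diagonal and the off-diagonal decay of $\nu(\cK)$. Once this matching is observed, each summand reduces to a one-line pointwise bound on $|x-y|^{-(n-\veps p)}$, and the volume of unit cubes is controlled via the ball formula \eqref{eq:MeasureOfBall} up to dimensional constants.
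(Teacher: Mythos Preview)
Your proof is correct and follows essentially the same approach as the paper's: both arguments bound $\nu(\cK)$ above and below via the pointwise inequalities $\dist(K_1,K_2) \leq |x-y| \leq \dist(K_1,K_2) + 2\sqrt{n}\,2^{-k}$ and use the $\veps^{-1}(2^{-k})^{n+\veps p}$ scaling of $\nu(\Kh)$, with your scaling reduction to unit cubes being merely a presentational reorganization of the same computations.
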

\begin{proof}
Without loss of generality assume $\dist(K_1,K_2) > 0$. By definition of $\nu$
\begin{equation*}
\nu(\Kh) \geq \frac{C(n) (2^{-k})^{n+ \veps p} }{\veps}\,, \qquad \text{while} \qquad \nu(\cK) \leq \frac{2^{(-k)2n}}{\dist(K_1,K_2)^{n-\veps p}}\,.
\end{equation*}
Thus the first quantity is bounded;
\begin{equation*}
\frac{1}{\veps} \left( \frac{\dist(K_1,K_2)}{2^{-k}} \right)^{n - \veps p} \frac{\nu(\cK)}{\nu(\Kh)} \leq C(n)\,.
\end{equation*}
On the other hand, if $x \in K_1$ and $y \in K_2$ then
\begin{equation*}
\begin{split}
\dist(K_1,K_2) &\leq |x-y| \leq \dist(K_1,K_2) + 2\sqrt{n} (2^{-k}) \\
	&\leq 2 \sqrt{n} (\dist(K_1,K_2) + 2^{-k} )\,.
\end{split}
\end{equation*}
Thus by definition of $\nu$
\begin{equation*}
\nu(\cK) \geq \frac{(2^{-k})^{2n}}{(2\sqrt{n})^{n-\veps p} ( \dist(K_1,K_2) + 2^{-k} )^{n-\veps p} }\,.
\end{equation*}
Since
\begin{equation*}
\nu(\Kh) = \frac{C(n)}{\veps} (2^{-k})^{n+\veps p}\,,
\end{equation*}
we have
\begin{equation*}
\begin{split}
\veps \left( \frac{\dist(K_1,K_2)}{2^{-k}} \right)^{\veps p - n} \frac{\nu(\Kh)}{\nu(\cK)} &\leq c(n) \left( \frac{\dist(K_1,K_2)}{2^{-k}} \right)^{\veps p - n} \frac{(\dist(K_1,K_2) + 2^{-k} )^{n-\veps p}}{(2^{-k})^{n-\veps p}} \\
	&\leq c(n) \left( \frac{\dist(K_1,K_2)}{2^{-k}} \right)^{\veps p' - n} \frac{(\dist(K_1,K_2) )^{n-\veps p}}{(2^{-k})^{n-\veps p}} \leq C(n)\,,
\end{split}
\end{equation*}
where in the second inequality we used that $\dist(K_1,K_2) \geq 2^{-k}$. Thus the second term is bounded by a dimensional constant as well, and so \eqref{eq:LevelSetProof:Cdd} holds for some constant $C_{dd}$ depending only on $n$.
\end{proof}

\begin{lemma}\label{lma:TwoDimensionalConstants2}
There exists a constant $C_{ddd}$ depending only on $n$ such that
\begin{equation}\label{eq:LevelSetProof:Cddd}
\sup \left\{ \frac{\nu(\widetilde{\cK})}{\nu(\cK)} \, : \, \widetilde{\cK} \text{ is the predecessor of } \cK\,, \, \dist(\widetilde{K_1},\widetilde{K_2}) \geq 2^{-k(\cK)} \right\} \leq C_{ddd}\,.
\end{equation}
\end{lemma}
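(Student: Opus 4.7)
The plan is to directly estimate $\nu(\widetilde{\cK})$ from above and $\nu(\cK)$ from below using the fact that, on each product cube $K_1 \times K_2$ with side length $2^{-k}$, the quantity $|x-y|$ lies in the range
\[
\dist(K_1,K_2) \;\leq\; |x-y| \;\leq\; \dist(K_1,K_2) + 2\sqrt{n}\, 2^{-k}.
\]
Since $n - \veps p > 0$, the integrand $|x-y|^{-(n-\veps p)}$ defining $\nu$ is monotone in $|x-y|$, so we immediately obtain two-sided bounds
\[
\frac{|\cK|}{(\max_{\cK}|x-y|)^{n-\veps p}} \;\leq\; \nu(\cK) \;\leq\; \frac{|\cK|}{(\min_{\cK}|x-y|)^{n-\veps p}},
\]
and analogously for $\widetilde{\cK}$. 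This is precisely the style of estimate used to establish Lemma \ref{lma:TwoDimensionalConstants}; the novelty here is just that we compare a cube with its predecessor rather than with a diagonal ball.

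Applying this to $\widetilde{\cK}$ (whose side length is $2^{-(k-1)}$, so that $|\widetilde{\cK}| = 2^{2n}|\cK|$) and to $\cK$, and taking the ratio, I would obtain
\[
\frac{\nu(\widetilde{\cK})}{\nu(\cK)} \;\leq\; 2^{2n}\,\frac{\bigl(\dist(K_1,K_2) + 2\sqrt{n}\, 2^{-k}\bigr)^{n-\veps p}}{\dist(\widetilde{K}_1,\widetilde{K}_2)^{n-\veps p}}.
\]
The hypothesis $\dist(\widetilde{K}_1,\widetilde{K}_2) \geq 2^{-k}$ is exactly the leverage needed to absorb the additive $2^{-k}$ terms into $\dist(\widetilde{K}_1,\widetilde{K}_2)$.

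To make this rigorous I would combine two facts. First, by the monotonicity property \eqref{eq:MonotonicityOfCubes} and the elementary geometric fact that $K_h \subset \widetilde{K}_h$ for $h=1,2$, the distances satisfy
\[
\dist(\widetilde{K}_1,\widetilde{K}_2) \;\leq\; \dist(K_1,K_2) \;\leq\; \dist(\widetilde{K}_1,\widetilde{K}_2) + 4\sqrt{n}\, 2^{-k},
\]
the upper bound coming from choosing any $x \in K_1 \subset \widetilde{K}_1$, $y \in K_2 \subset \widetilde{K}_2$ and using the diameter of $\widetilde{K}_h$. Second, under the hypothesis $\dist(\widetilde{K}_1,\widetilde{K}_2) \geq 2^{-k}$, both the $2^{-k}$ term and the bound on $\dist(K_1,K_2)$ are controlled by a dimensional multiple of $\dist(\widetilde{K}_1,\widetilde{K}_2)$, yielding
\[
\dist(K_1,K_2) + 2\sqrt{n}\, 2^{-k} \;\leq\; (1 + 6\sqrt{n})\,\dist(\widetilde{K}_1,\widetilde{K}_2).
\]
Substituting this back gives $\nu(\widetilde{\cK})/\nu(\cK) \leq 2^{2n}(1+6\sqrt{n})^{n-\veps p} \leq 2^{2n}(1+6\sqrt{n})^{n}$, which defines the desired constant $C_{ddd} = C_{ddd}(n)$.

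I do not anticipate a real obstacle: the argument is a bookkeeping exercise paralleling the proof of Lemma \ref{lma:TwoDimensionalConstants}, and the crucial role of the hypothesis $\dist(\widetilde{K}_1,\widetilde{K}_2) \geq 2^{-k}$ is to prevent the predecessor distance from being much smaller than the cube side length, which would otherwise allow $\nu(\widetilde{\cK})$ to blow up relative to $\nu(\cK)$ when $\cK$ sits much further from the diagonal than $\widetilde{\cK}$ does. The only subtlety to verify is the monotonicity-of-distances inequality; once that is in hand the rest is a one-line computation.
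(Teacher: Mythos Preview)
Your proposal is correct and follows essentially the same approach as the paper: both arguments upper-bound $\nu(\widetilde{\cK})$ by $|\widetilde{\cK}|\,\dist(\widetilde{K}_1,\widetilde{K}_2)^{-(n-\veps p)}$ and then use the hypothesis $\dist(\widetilde{K}_1,\widetilde{K}_2)\geq 2^{-k(\cK)}$ to show that $|x-y|\leq C(n)\,\dist(\widetilde{K}_1,\widetilde{K}_2)$ for all $(x,y)\in\cK$, which converts the upper bound into a dimensional multiple of $\nu(\cK)$. The paper simply substitutes this distance comparison directly into the integral defining $\nu(\cK)$ rather than passing through your intermediate two-sided bounds, but the content is identical.
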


\begin{proof}
The triangle inequality gives
\begin{equation}\label{eq:LevelSetProof:SecondConstantProof}
|x-y| \leq 2 \sqrt{n} 2^{-k(\cK) + 1} + \dist(\widetilde{K_1}, \widetilde{K_2}) \leq 8 \sqrt{n} \dist(\widetilde{K_1},\widetilde{K_2})\,,
\end{equation}
whenever $x \in K_1$, $y \in K_2$ and $\dist(\widetilde{K_1},\widetilde{K_2}) \geq 2^{-k(\cK)}$. Therefore,
\begin{equation*}
\begin{split}
\nu(\widetilde{\cK}) &\leq \dist(\widetilde{K_1},\widetilde{K_2})^{-n-\veps p} |\widetilde{K_1}| |\widetilde{K_2}| \\
	&= 4^n \dist(\widetilde{K_1},\widetilde{K_2})^{-n-\veps p} \iintdm{K_1}{K_2}{}{x}{y} \\
	&\EquationReference{\eqref{eq:LevelSetProof:SecondConstantProof}}{\leq} 4^n 8 \sqrt{n} \iintdm{K_1}{K_2}{\frac{1}{|x-y|^{n-\veps p}}}{x}{y} = C(n) \nu(\cK)\,.
\end{split}
\end{equation*}
\end{proof}

\subsubsection{Calder\'on-Zygmund Covering and Sorting of Dyadic Cubes}\label{subsubsec:CZDecomp}
Here we write down a version of the classical Calder\'on-Zygmund decomposition adapted for use in our context. The proof is virtually identical to the classical proof, taking into account that the measure is doubling and absolutely continuous with respect to Lebesgue measure; see \cite{kuusi2015, Stein}.

\begin{theorem}\label{thm:CZDecomp}
Let $Q_0 \subset \bbR^{2n}$ be a cube and let $V \geq 0$ be a function in $L^1(Q_0)$. Let $\widetilde{\lambda}$ be a number such that
\begin{equation*}
\fint_{Q_0} V \, \rmd \nu \leq \widetilde{\lambda}\,.
\end{equation*}
Then there exists a collection of at most countable cubes $\{ Q_i \}$ that are pairwise disjoint with sides parallel to those of $Q_0$ such that
\begin{equation*}
\widetilde{\lambda} < \fint_{Q_i} V \, \rmd \nu \qquad \text{ and } \qquad \fint_{\widetilde{Q}_i} V \, \rmd \nu \leq \widetilde{\lambda} \quad \text{ for every } Q_i\,,
\end{equation*}
where $\widetilde{Q}_i$ is the predecessor of $Q_i$, and\begin{equation*}
V \leq \widetilde{\lambda} \quad \text{ almost everywhere in } Q_0 \setminus \bigcup_i Q_i\,.
\end{equation*}
\end{theorem}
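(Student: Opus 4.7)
\textbf{Proof plan for Theorem \ref{thm:CZDecomp}.} The plan is to run the classical Calder\'on--Zygmund stopping-time construction, leveraging the fact that the measure $\nu$ from \eqref{eq:MeasureDefn} is absolutely continuous with respect to Lebesgue measure and doubling (Theorem \ref{thm:Measure}). Starting from $Q_0$, I would bisect each coordinate edge to produce the $2^{2n}$ first-generation dyadic sub-cubes, and then inductively process each uncut cube as follows: if a current sub-cube $Q'$ satisfies $\fint_{Q'} V \, \rmd \nu > \widetilde{\lambda}$, I place $Q'$ into the collection $\{Q_i\}$ and stop subdividing it; otherwise, I bisect $Q'$ once more and continue. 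The resulting family is at most countable, pairwise disjoint by construction, and each selected $Q_i$ automatically obeys the lower bound $\fint_{Q_i} V \, \rmd \nu > \widetilde{\lambda}$.

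The upper bound on the predecessor $\widetilde{Q}_i$ is then immediate: either $\widetilde{Q}_i = Q_0$, in which case the hypothesis $\fint_{Q_0} V \, \rmd \nu \leq \widetilde{\lambda}$ applies, or $\widetilde{Q}_i$ was inspected at the previous generation and found to satisfy the reverse inequality (for otherwise $\widetilde{Q}_i$ itself would have been selected rather than continuing to its children). This gives the two middle inequalities in the statement with no further work.

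For the pointwise bound on $Q_0 \setminus \bigcup_i Q_i$ I would appeal to the Lebesgue differentiation theorem for $\nu$. Any point $(x,y)$ in this complement sits inside a nested sequence of dyadic sub-cubes $Q^{(k)}$ whose diameters tend to zero, none of which is ever selected, so $\fint_{Q^{(k)}} V \, \rmd \nu \leq \widetilde{\lambda}$ for every $k$. Passing to the limit at every $\nu$-Lebesgue point of $V$ then yields $V(x,y) \leq \widetilde{\lambda}$. To justify the differentiation step along this sequence of shrinking cubes, I would either invoke Vitali's covering theorem for the doubling measure $\nu$ (available by \eqref{eq:MeasureDoublingProperty}), or equivalently rewrite
\[
\fint_{Q^{(k)}} V \, \rmd \nu = \frac{\int_{Q^{(k)}} V(x,y) |x-y|^{-(n-\veps p)} \, \rmd x \, \rmd y}{\int_{Q^{(k)}} |x-y|^{-(n-\veps p)} \, \rmd x \, \rmd y}
\]
and apply the classical Lebesgue differentiation theorem on $\bbR^{2n}$, noting that $|x-y|^{-(n-\veps p)}$ is locally integrable whenever $\veps p < 1$, which is ensured by the standing restriction $\veps \in (0, 1/p)$.

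The only genuine obstacle is this last step, i.e.\ establishing differentiation along the dyadic sequence for the singular-but-doubling measure $\nu$; everything else is a verbatim transcription of the classical Euclidean argument and so will not be written out in detail.
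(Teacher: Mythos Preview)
Your proposal is correct and is precisely the argument the paper has in mind: the paper does not write out a proof at all, stating only that ``the proof is virtually identical to the classical proof, taking into account that the measure is doubling and absolutely continuous with respect to Lebesgue measure'' and citing \cite{kuusi2015, Stein}. Your stopping-time construction together with the Lebesgue differentiation argument for the doubling measure $\nu$ is exactly that classical proof, and in fact supplies more detail than the paper does.
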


We will use this theorem to cover the level set of $H$. Define
\begin{equation}\label{eq:DefnOfLambda2}
\lambda_2 := \max \left\{ \lambda_1, \sup_{\cK \in \Delta_{k_0}} \left( \fint_{\cK} H^{p'} \, \rmd \nu \right)^{1/p'} \right\}\,.
\end{equation}
Recall that $\lambda_1$ and $k_0$ have been defined in \eqref{eq:LevelSetProof:1} and \eqref{eq:MinSideLength} respectively. Note that by \eqref{eq:LevelSetProof:CubesAreCovering} the cubes $\{ \cK \}_{\cK \in \Delta_{k_0}}$ cover $\cB(x_0,\beta)$. For $\lambda \geq \lambda_2$ apply Theorem \ref{thm:CZDecomp} with $Q_0 = \cK_0$ for each and every cube $\cK_0 \in \Delta_{k_0}$. We obtain a pairwise disjoint family of cubes $Q_i (\cK_0)$ such that
\begin{equation}\label{eq:CZSplittingForH}
\lambda^{p'} < \fint_{Q_i(\cK_0)} H^{p'} \, \rmd \nu \qquad \text{ and } \qquad \fint_{\widetilde{Q}_i(\cK_0)} H^{p'} \, \rmd \nu \leq \widetilde{\lambda}^{p'} \quad \text{ for every } Q_i(\cK_0)\,,
\end{equation}
where $\widetilde{Q}_i(\cK_0)$ denotes the predecessor of $Q_i(\cK_0)$, and 
\begin{equation*}
H \leq \lambda \quad \text{a.e. in } \cK_0 \setminus \bigcup_i Q_i(\cK_0)\,.
\end{equation*}
Thus, we get a countable collection of disjoint dyadic cubes
\begin{equation*}
\Hlambda := \bigcup_{\cK_0 \in \Delta_{k_0}} \{ Q_i(\cK_0) \} = \{\cK \} 
\end{equation*}
that satisfy
\begin{equation}\label{eq:CZBoundForH}
\lambda^{p'} < \fint_{\cK} H^{p'} \, \rmd \nu \quad \text{ and } \quad \fint_{\widetilde{\cK}} H^{p'} \, \rmd \nu \leq \lambda^{p'} \qquad \text{ for every } \cK \in \Hlambda
\end{equation}
where $\widetilde{\cK}$ denotes the predecessor of $\cK$ and such that
\begin{equation}\label{eq:BoundOnHOutsideOfHLambda}
H \leq \lambda \quad \text{a.e. in } \cB(x_0,\alpha) \setminus \bigcup_{\cK \in \Hlambda} \cK\,.
\end{equation}

\subsubsection{Nearly Diagonal Cubes}\label{subsubsec:NearDiag}
It turns out that the cubes whose distance to the diagonal is smaller than their size can be covered by diagonal balls $\cB_j$ chosen in the exit-time argument above, see \eqref{eq:LevelSetProof:4}-\eqref{eq:LevelSetProof:5}. This leaves us to deal with the cubes that are ``far" from the diagonal in the next section.

To begin, we define the family of ``nearly diagonal" cubes
\begin{equation*}
\Hdlambda := \{ \cK \in \Hlambda \, : \, \dist(\widetilde{K}_1, \widetilde{K}_2) < 2^{-k(\cK)}\,, \quad \widetilde{\cK} = \widetilde{K}_1 \times \widetilde{K}_2 \text{ is the predecessor of } \cK \}\,.
\end{equation*}
With $\cK \in \Hdlambda$, let $(\widetilde{x},\widetilde{x}) \in \Diag$ such that $\dist((\widetilde{x},\widetilde{x}),\widetilde{\cK}) = \dist(\Diag, \widetilde{\cK}) $ and a diagonal ball $\cB(\widetilde{x},R)$ such that
\begin{equation*}
R \geq \frac{5 \sqrt{n}}{2} \dist(\widetilde{K}_1, \widetilde{K}_2) + 5 \sqrt{n} 2^{-k(\cK) + 1}\,.
\end{equation*}
Using \eqref{eq:DistBetweenCubeAndDiag} for $\widetilde{\cK}$, it follows that $\widetilde{\cK} \subset \cB(\widetilde{x},R)$. Thus, the diagonal ball $\cB \equiv \cB(\widetilde{x}, 24 \sqrt{n} 2^{-k(\cK)})$ satisfies $\cB \subset \cK$. Now note that by \eqref{eq:MeasureOfBallAndCube} in Theorem \ref{thm:Measure} there exists a constant $C_d = C_d(n,p)$ such that
\begin{equation*}
1  \leq \frac{\nu(\cB)}{\nu(\cK)} \leq \frac{C_d}{\veps}\,. 
\end{equation*}
Thus if $\cK \in \Hdlambda$ then the lower bound in \eqref{eq:CZBoundForH} gives
\begin{equation*}
\lambda^{p'} < \fint_{\cK} H^{p'} \, \rmd \nu \leq \frac{\nu(\cB)}{\nu(\cK)} \fint_{\cB} H^{p'} \, \rmd \nu \leq \frac{C_d}{\veps} \fint_{\cB} H^{p'} \, \rmd \nu\,.
\end{equation*}
By choosing the number $\kappa \in (0,1]$ introduced in \eqref{eq:LevelSetProof:1} to satisfy
\begin{equation}\label{eq:KappaCondition1}
\kappa \in (0,\kappa_0]\,, \qquad \kappa_0 := \frac{\veps^{1/p'}}{(2 C_d)^{1/p'}}\,,
\end{equation}
we have therefore proved that
\begin{equation*}
\text{ For every } \cK \in \Hdlambda \text{ there exists } \cB^{\cK} = B^{\cK} \times B^{\cK} \text{ such that } \kappa^{p'} \lambda^{p'} < \fint_{\cB^{\cK}} H^{p'} \, \rmd \nu \text{ with } \cK \subset \cB^{\cK}\,.
\end{equation*}
Denote the center of $\cB^{\cK}$ by $\widetilde{x}$. Then by the choice \eqref{eq:MinSideLength} for the lower bound on $k(\cK)$, it follows that the radius of the ball $\cB \equiv \cB(\widetilde{x},24 \sqrt{n} 2^{-k(\cK)})$ is smaller than $\frac{\alpha-\beta}{40^n}$. Therefore we can apply the exit time condition  \eqref{eq:LevelSetProof:4} to obtain that $(\widetilde{x},\widetilde{x}) \in C_{\kappa \lambda}$ and then $\cB^{\cK} \subset \cB(\widetilde{x},R(\widetilde{x}))$. By \eqref{eq:LevelSetProof:5} it follows that
\begin{equation}\label{eq:DiagCubesContained}
\bigcup_{\cK \in \Hdlambda} \cK \subset \bigcup_j 10 \cB_j\,.
\end{equation}

\subsubsection{Off-Diagonal Reverse H\"older Inequalities}

Since the nearly diagonal cubes $\Hdlambda$ have been covered by the diagonal cover, we need only consider the off-diagonal cubes
\begin{equation}\label{eq:DefnOfHndlambda}
\Hndlambda := \{ \cK \in \Hlambda \, : \, \dist(\widetilde{K}_1, \widetilde{K}_2) \geq 2^{-k(\cK)}\,, \quad \widetilde{\cK} = \widetilde{K}_1 \times \widetilde{K}_2 \text{ is the predecessor of } \cK \}\,.
\end{equation}
By \eqref{eq:MonotonicityOfCubes} we also have
\begin{equation*}
\dist(K_1, K_2) \leq 2^{-k(\cK)} \qquad \text{ for every } \cK \in \Hndlambda\,.
\end{equation*}

Our objective now is to categorize and estimate sums of the measures of cubes in $\Hndlambda$. We will use the following lemma to do so. This lemma states that for off-diagonal cubes an ``almost reverse H\"older inequality" holds automatically regardless of whether the function $u$ solves an equation. However, diagonal cubes appear in the estimate, which must be treated by a combinatorial argument in subsequent sections.

\begin{lemma}\label{lma:OffDiagRevHolderIneq}
Let $k \geq k_0$ and let $\cK = K_1 \times K_2 \in \Delta_k$. Then there exists a constant $C_{nd} = C_{nd}(\texttt{data})$ independent of $\veps$ such that if $\dist(K_1,K_2) \geq 2^{-k}$ then
\begin{equation}\label{eq:OffDiagRevHolderIneq}
\begin{split}
\left( \fint_{\cK} H^{p'} \, \rmd \nu \right)^{1/p'} 
	&\leq C_{nd} \left( \fint_{\cK} H^{\gamma} \, \rmd \nu \right)^{1/\gamma} \\
	&\quad + \frac{C_{nd}}{\veps^{1/\gamma}} \left( \frac{2^{-k}}{\dist(K_1,K_2)} \right)^{(p-1)(s+\veps)} \left[ \left( \fint_{\Kone} H^{\gamma} \, \rmd \nu \right)^{1/\gamma} + \left( \fint_{\Ktwo} H^{\gamma} \, \rmd \nu \right)^{1/\gamma} \right]\,,
\end{split}
\end{equation}
with $\gamma$ defined as in Corollary \ref{cor:RevHolderFinal}. In particular, \eqref{eq:OffDiagRevHolderIneq} holds whenever $\cK \in \Hndlambda$.
\end{lemma}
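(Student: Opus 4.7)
The plan is to reduce the claim to a corresponding almost reverse H\"older inequality for $U$ alone, prove that version by a direct triangle-inequality argument combined with the fractional Sobolev-Poincar\'e estimate of Lemma \ref{lma:SobolevEmbeddingForDualPair}, and translate the conclusion back to $H$ using the $L^{\infty}$ bound on $u$.

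For the reduction, I first rewrite the cross term in $G$ as
\[
A(x,y) U^{q} = a(x,y) |x-y|^{sp-tq} |u(x)-u(y)|^{q-p} U^{p}.
\]
Since $sp \geq tq$ by \eqref{Assumption:Intro:Exp}, $a \leq M$, $u \in L^{\infty}(\bbR^{n})$, and $|x-y|$ is bounded on $\cB(x_{0},\alpha)$, the prefactor is controlled by a constant depending only on \texttt{data}. Hence $H^{p'} = G(x,y,U) \leq C U^{p}$, while $U^{\eta} \leq H^{\gamma}$ follows at once from $G \geq U^{p}$. It therefore suffices to prove the $U$-version
\[
\left( \fint_{\cK} U^{p} \, \rmd \nu \right)^{1/p} \leq C \left( \fint_{\cK} U^{\eta} \, \rmd \nu \right)^{1/\eta} + \frac{C}{\veps^{1/\eta}} \left( \frac{2^{-k}}{\dist(K_{1},K_{2})} \right)^{s+\veps} \left[ \left( \fint_{\Kone} U^{\eta} \, \rmd \nu \right)^{1/\eta} + \left( \fint_{\Ktwo} U^{\eta} \, \rmd \nu \right)^{1/\eta} \right],
\]
after which raising to the power $p-1 \geq 1$ and distributing via $(a+b+c)^{p-1} \leq 3^{p-2}(a^{p-1}+b^{p-1}+c^{p-1})$ delivers the statement with the correct exponents, because $(p-1)/p = 1/p'$, $(p-1)/\eta = 1/\gamma$, and $\veps^{-(p-1)/\eta} = \veps^{-1/\gamma}$.

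The key geometric input is that on $\cK$ with $\dist(K_{1},K_{2}) \geq 2^{-k}$ one has $\dist(K_{1},K_{2}) \leq |x-y| \leq (1 + 2\sqrt{n})\dist(K_{1},K_{2})$, so the weight $|x-y|^{-n+\veps p}$ is essentially constant across $\cK$; in particular $\nu(\cK)$ is comparable to $\dist(K_{1},K_{2})^{-n+\veps p} |K_{1}||K_{2}|$, and $\nu$-averages over $\cK$ reduce up to dimensional constants to Lebesgue averages over $K_{1}\times K_{2}$. I then apply the triangle inequality
\[
U(x,y) \leq \frac{|u(x)-(u)_{K_{1}}|}{|x-y|^{s+\veps}} + \frac{|(u)_{K_{1}}-(u)_{K_{2}}|}{|x-y|^{s+\veps}} + \frac{|(u)_{K_{2}}-u(y)|}{|x-y|^{s+\veps}}
\]
and Minkowski in $L^{p}(\cK;\nu)$. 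The constant middle term is handled by bounding $|(u)_{K_{1}}-(u)_{K_{2}}|$ by $\fint_{K_{1}}\fint_{K_{2}} |u(x')-u(y')| \, dy' \, dx'$, pulling $\dist(K_{1},K_{2})^{s+\veps}$ out of the denominator using $|x'-y'| \leq C\dist(K_{1},K_{2})$, reconstructing $U(x',y')$, and finally controlling the resulting $U$-average by $(\fint_{\cK} U^{\eta} \, \rmd\nu)^{1/\eta}$ via Jensen. For the first and third terms, the numerators depend only on $x$ (resp.\ $y$), so after the measure comparison the $\cK$-average collapses to $\fint_{K_{1}} |u(x)-(u)_{K_{1}}|^{p} \, dx$ (resp.\ on $K_{2}$); Lemma \ref{lma:SobolevEmbeddingForDualPair} applied on the cube $K_{1}$ (resp.\ $K_{2}$) yields the factor $\veps^{-1/\eta}(2^{-k})^{s+\veps}(\fint_{\Kone} U^{\eta} \, \rmd\nu)^{1/\eta}$, and division by $\dist(K_{1},K_{2})^{s+\veps}$ produces the decay ratio $(2^{-k}/\dist(K_{1},K_{2}))^{s+\veps}$ on the right-hand side.

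The genuine analytic content is isolated in Lemma \ref{lma:SobolevEmbeddingForDualPair}, invoked as a black box; the main obstacle in the plan is therefore bookkeeping: verifying that the comparability of $\rmd\nu$ to Lebesgue measure on the off-diagonal cube supplies exactly the right powers of $\dist(K_{1},K_{2})$ to combine with the $(2^{-k})^{s+\veps}$ coming from the Sobolev-Poincar\'e inequality, and that the dependence on $\veps$ passes cleanly from $(\fint U^{\eta})^{1/\eta}$ to $(\fint H^{\gamma})^{1/\gamma}$ at the end, with no spurious $\veps$-factors introduced by the reduction $G \leq C U^{p}$.
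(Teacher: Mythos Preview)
Your proposal is correct and follows essentially the same route as the paper: both reduce $H^{p'}=G$ to $U^{p}$ via the $L^{\infty}$ bound on $u$ and the comparability $|x-y|\sim\dist(K_{1},K_{2})$ on the off-diagonal cube, split $|u(x)-u(y)|$ (equivalently $U(x,y)$) by the triangle inequality through the two cube averages, apply Lemma~\ref{lma:SobolevEmbeddingForDualPair} on $K_{1}$ and $K_{2}$ for the oscillation pieces, handle the middle term by H\"older and the measure comparison, and close with $U^{\eta}\leq H^{\gamma}$. The only cosmetic difference is ordering: you perform the reduction $G\leq C\,U^{p}$ up front and then raise the resulting $U$-inequality to the power $p-1$, whereas the paper carries $H^{p'}$ through to \eqref{eq:OffDiagRevHolderIneq:Proof4} and reduces inside the computation; the ingredients, the $\veps$-dependence, and the final constants match.
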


\begin{proof}
First, there exist $x_1 \in \overline{K_1}$ and $y_1 \in \overline{K_2}$ such that $\dist(K_1,K_2) = |x_1 - y_1|$. Then for any $(x,y) \in \cK$,
\begin{equation*}
\begin{split}
|x-y| &\leq \dist(K_1, K_2) + |x_1 - x| + |y_1 - y| \\
	&\leq \dist(K_1, K_2) + 2 \sqrt{n} 2^{-k} \\
	&\leq 3 \sqrt{n} \dist(K_1, K_2)\,,
\end{split}
\end{equation*}
since $\dist(K_1,K_2) \geq 2^{-k}$. Therefore,
\begin{equation}\label{eq:OffDiagRevHolderIneq:Proof1}
1 \leq \frac{|x-y|}{\dist(K_1,K_2)}  \leq 3 \sqrt{n} \quad \text{ for all } (x,y) \in \cK\,;
\end{equation}
the first inequality is a consequence of the definition of $\dist(K_1, K_2)$. Next, by definition of $\nu$ we have
\begin{equation}\label{eq:OffDiagRevHolderIneq:Proof2}
\frac{1}{C(n,p)} \frac{4^{-nk}}{\dist(K_1,K_2)^{n-\veps p}} \leq \nu(\cK) \leq C(n,p) \frac{4^{-nk}}{\dist(K_1,K_2)^{n-\veps p}}\,.
\end{equation}
We therefore have, using \eqref{eq:OffDiagRevHolderIneq:Proof1} and \eqref{eq:OffDiagRevHolderIneq:Proof2},
\begin{equation}\label{eq:OffDiagRevHolderIneq:Proof3}
\begin{split}
\left( \fint_{\cK} H^{p'} \, \rmd \nu \right)^{1/p'} 
	&= \left( \frac{1}{\nu(\cK)} \iintdm{K_1}{K_2}{ \frac{|u(x)-u(y)|^p}{|x-y|^{n+sp}} + a(x,y) \frac{|u(x)-u(y)|^q}{|x-y|^{n+tq}} }{y}{x} \right)^{1/p'} \\
	&\leq C \bigg[ \left( \frac{\dist(K_1,K_2)^{n-\veps p - (n+sp)}}{4^{-nk}} \iintdm{K_1}{K_2}{|u(x)-u(y)|^p}{y}{x} \right) \\
	&\qquad \qquad + \left( \frac{\dist(K_1,K_2)^{n-\veps p - (n+tq)}}{4^{-nk}} \iintdm{K_1}{K_2}{|u(x)-u(y)|^q}{y}{x} \right)\bigg]^{1/p'} \\
	&\leq C \dist(K_1,K_2)^{-(p-1)(s+\veps)} \bigg[ \fint_{K_1} \fint_{K_2} {|u(x)-u(y)|^p} \, \rmd y \, \rmd x  \\
	&\qquad \qquad + \dist(K_1,K_2)^{sp-tq} \fint_{K_1} \fint_{K_2} {|u(x)-u(y)|^q} \, \rmd y \, \rmd x \bigg]^{1/p'}\,,
\end{split}
\end{equation}
where $C = C(\texttt{data})$. Using \eqref{eq:LevelSetProof:CubesAreCovering} we can estimate
\begin{equation*}
\dist(K_1,K_2)^{sp-tq} \leq (2 \alpha)^{sp-tq} \leq (3 \varrho_0)^{sp-tq} \leq 3^{sp-tq} \equiv C(\texttt{data})\,,
\end{equation*}
and we also have
\begin{equation*}
\begin{split}
\fint_{K_1} \fint_{K_2} {|u(x)-u(y)|^q} \, \rmd y \, \rmd x  
	&\leq (2 \Vnorm{u}_{L^{\infty}(\bbR^n)})^{q-p} \fint_{K_1} \fint_{K_2} {|u(x)-u(y)|^p} \, \rmd y \, \rmd x \\
	&\leq C(\texttt{data}) \fint_{K_1} \fint_{K_2} {|u(x)-u(y)|^p} \, \rmd y \, \rmd x\,.
\end{split}
\end{equation*}
Combining these two estimates with \eqref{eq:OffDiagRevHolderIneq:Proof3} yields
\begin{equation}\label{eq:OffDiagRevHolderIneq:Proof4}
\left( \fint_{\cK} H^{p'} \, \rmd \nu \right)^{1/p'} \leq C \dist(K_1,K_2)^{-(p-1)(s+\veps)} \left( \fint_{K_1} \fint_{K_2} {|u(x)-u(y)|^p} \, \rmd y \, \rmd x \right)^{1/p'}\,,
\end{equation}
where $C = C(\texttt{data})$. Now, using Minkowski's inequality,
\begin{equation*}
\begin{split}
\left( \fint_{K_1} \fint_{K_2} {|u(x)-u(y)|^p}  \, \rmd y \, \rmd x \right)^{1/p} &\leq \left( \fint_{K_1} {|u(x)-(u)_{K_1}|^p}  \, \rmd x \right)^{1/p} \\
&+ \left(\fint_{K_2} {|u(x)-(u)_{K_2}|^p}  \, \rmd x \right)^{1/p} + |(u)_{K_1} - (u)_{K_2}|\,.
\end{split}
\end{equation*}
Then using the Sobolev embedding theorem in Lemma \ref{lma:SobolevEmbeddingForDualPair} adapted for $U$ and $\nu$ and applied to cubes, we have
\begin{equation*}
\left( \fint_{K_h} {|u(x)-(u)_{K_1}|^p}  \, \rmd x \right)^{1/p} \leq \frac{C 2^{-k(s+\veps)}}{\veps^{1/\eta}} \left( \fint_{\Kh} U^{\eta} \, \rmd \nu \right)^{1/\eta}\,, \qquad h \in \{1 , 2\}\,, C = C(n,s,p)\,.
\end{equation*}
Next, using H\"older's inequality and repeatedly using \eqref{eq:OffDiagRevHolderIneq:Proof1} and \eqref{eq:OffDiagRevHolderIneq:Proof2},
\begin{equation*}
\begin{split}
|(u)_{K_1} - (u)_{K_2}| 
	&\leq \fint_{K_1} \fint_{K_2} |u(x)-u(y)| \, \rmd y \, \rmd x \\
	&\leq \left( \fint_{K_1} \fint_{K_2} |u(x)-u(y)|^{\eta} \, \rmd y \, \rmd x \right)^{1/\eta} \\
	&\leq C \left( \frac{1}{\dist(K_1,K_2)^{n-\veps p} \nu(\cK) } \int_{K_1} \int_{K_2} |u(x)-u(y)|^{\eta} \, \rmd y \, \rmd x \right)^{1/\eta} \\
	&\leq C \left( \fint_{\cK} |u(x)-u(y)|^{\eta} \, \rmd \nu \right)^{1/\eta} \leq C \dist(K_1,K_2)^{s+\veps} \left( \fint_{\cK} U^{\eta} \, \rmd \nu \right)^{1/\eta}\,,
\end{split}
\end{equation*}
where $C = C(\texttt{data})$. Combining the three above inequalities with \eqref{eq:OffDiagRevHolderIneq:Proof4} and using the inequality $(a+b+c)^r \leq 3^{r-1}(a^r + b^r + c^r)$ valid for $r \geq 1$ and $a$, $b$, $c \geq 0$ gives
\begin{equation*}
\begin{split}
\left( \fint_{\cK} H^{p'} \, \rmd \nu \right)^{1/p'} 
	&\leq C_{nd} \left( \fint_{\cK} U^{\eta} \, \rmd \nu \right)^{(p-1)/\eta} \\
	& + \frac{C_{nd}}{\veps^{(p-1)/\eta}} \left( \frac{2^{-k}}{\dist(K_1,K_2)} \right)^{(p-1)(s+\veps)} \left[ \left( \fint_{\Kone} U^{\eta} \, \rmd \nu \right)^{(p-1)/\eta} + \left( \fint_{\Ktwo} U^{\eta} \, \rmd \nu \right)^{(p-1)/\eta} \right]\,.
\end{split}
\end{equation*}
To see that \eqref{eq:OffDiagRevHolderIneq} follows, recall that $\gamma = \frac{\eta}{p-1}$ and that by definition $U^{\eta} \leq H^{\gamma}$ pointwise.
\end{proof}

Note that this lemma holds for all functions $u \in W^{s,p}(\bbR^n) \cap L^{\infty}(\bbR^n)$ and for every integer $k$, and in the case $a \equiv 0$ it holds for every $u \in W^{s,p}(\bbR^n)$. We now apply it in order to begin the level set estimate.

\begin{corollary}
Let $k \geq k_0$ be an integer, and suppose that $\cK \in \Delta_k$ satisfies $\dist(K_1,K_2) \geq 2^{-k}$. Assume that
\begin{equation*}
\left( \fint_{\cK} H^{p'} \, \rmd \nu \right)^{1/p'} \geq \lambda\,,
\end{equation*}
and that the constant $\kappa$ introduced in \eqref{eq:LevelSetProof:1} satisfies
\begin{equation}\label{eq:KappaCondition2}
\kappa \in (0,\kappa_1]\,, \qquad \kappa_1 := \frac{\veps^{1/\gamma}}{2^{1/\gamma} 3 C_{nd}}\,,
\end{equation}
where $C_{nd} \equiv C_{nd}(\texttt{data})$ has been defined in Lemma \ref{lma:OffDiagRevHolderIneq}. Then
\begin{equation}\label{eq:OffDiagLevelSetEstimate1}
\begin{split}
\nu(\cK) &\leq \frac{3^{\gamma} C_{nd}^{\gamma}}{ \lambda^{\gamma}} \intdm{\cK \cap \{ H > \kappa \lambda \}}{H^{\gamma}}{\nu} \\
	&\quad + \frac{3^{\gamma} C_{nd}^{\gamma}}{\veps \lambda^{\gamma}} \left( \frac{2^{-k}}{\dist(K_1,K_2)} \right)^{\eta(s+\veps)}  \left[ \frac{\nu(\cK)}{\nu(\Kone)} \intdm{\Kone \cap \{ H > \kappa \lambda \} }{H^{\gamma}}{\nu} + \frac{\nu(\cK)}{\nu(\Ktwo)} \intdm{\Ktwo \cap \{ H > \kappa \lambda \} }{H^{\gamma}}{\nu} \right]\,.
\end{split}
\end{equation}
In particular, \eqref{eq:OffDiagLevelSetEstimate1} holds whenever $\cK \in \Hndlambda$.
\end{corollary}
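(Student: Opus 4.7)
The plan is to extract the corollary as a fairly direct consequence of Lemma \ref{lma:OffDiagRevHolderIneq} combined with the hypothesis $\big(\fint_\cK H^{p'}\,\rmd\nu\big)^{1/p'}\ge\lambda$, followed by a level-set truncation at height $\kappa\lambda$ whose error terms are absorbed thanks to the condition $\kappa\le\kappa_1$.

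First I would chain the hypothesis with Lemma \ref{lma:OffDiagRevHolderIneq} (legitimate since $\dist(K_1,K_2)\ge 2^{-k}$) to obtain
\[
\lambda \;\le\; C_{nd}\Bigl(\fint_{\cK}H^{\gamma}\,\rmd\nu\Bigr)^{1/\gamma} + \frac{C_{nd}}{\veps^{1/\gamma}}\Bigl(\frac{2^{-k}}{\dist(K_1,K_2)}\Bigr)^{(p-1)(s+\veps)}\Bigl[\Bigl(\fint_{\Kone}H^{\gamma}\,\rmd\nu\Bigr)^{1/\gamma}+\Bigl(\fint_{\Ktwo}H^{\gamma}\,\rmd\nu\Bigr)^{1/\gamma}\Bigr].
\]
Raising both sides to the $\gamma$-th power with the elementary bound $(a+b+c)^{\gamma}\le 3^{\gamma}(a^{\gamma}+b^{\gamma}+c^{\gamma})$, multiplying by $\nu(\cK)$, and using the identity $\gamma(p-1)(s+\veps)=\eta(s+\veps)$ (valid because $\gamma=\eta/(p-1)$) produces
\[
\lambda^{\gamma}\nu(\cK)\le 3^{\gamma}C_{nd}^{\gamma}\!\int_{\cK}H^{\gamma}\rmd\nu+\frac{3^{\gamma}C_{nd}^{\gamma}}{\veps}\Bigl(\frac{2^{-k}}{\dist(K_1,K_2)}\Bigr)^{\eta(s+\veps)}\Bigl[\frac{\nu(\cK)}{\nu(\Kone)}\!\int_{\Kone}H^{\gamma}\rmd\nu+\frac{\nu(\cK)}{\nu(\Ktwo)}\!\int_{\Ktwo}H^{\gamma}\rmd\nu\Bigr].
\]

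Next, for each of $A\in\{\cK,\Kone,\Ktwo\}$ I would split
\[
\int_{A}H^{\gamma}\,\rmd\nu\le\int_{A\cap\{H>\kappa\lambda\}}H^{\gamma}\,\rmd\nu+(\kappa\lambda)^{\gamma}\nu(A).
\]
The ``upper'' pieces become exactly the three integrals that appear in the desired right-hand side of \eqref{eq:OffDiagLevelSetEstimate1} once divided by $\lambda^{\gamma}$. The ``lower'' pieces, by the cancellation $\frac{\nu(\cK)}{\nu(\Kh)}\cdot\nu(\Kh)=\nu(\cK)$ and the bound $\bigl(2^{-k}/\dist(K_1,K_2)\bigr)^{\eta(s+\veps)}\le 1$ (which holds by hypothesis), collapse to a single expression of the form $C\,\kappa^{\gamma}\veps^{-1}\lambda^{\gamma}\nu(\cK)$.

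Finally I would invoke the choice $\kappa\le\kappa_1=\veps^{1/\gamma}/(2^{1/\gamma}\cdot 3\,C_{nd})$, so that $3^{\gamma}C_{nd}^{\gamma}\kappa^{\gamma}\le\veps/2$ and the accumulated error term is bounded by a constant strictly less than $1$ times $\lambda^{\gamma}\nu(\cK)$; absorbing this into the left-hand side and dividing by $\lambda^{\gamma}$ yields \eqref{eq:OffDiagLevelSetEstimate1} (with constants adjusted into $3^{\gamma}C_{nd}^{\gamma}$). The one delicate point is purely bookkeeping: the factor $\veps^{-1}$ coming from the tail term in Lemma \ref{lma:OffDiagRevHolderIneq} is precisely what forces the $\veps^{1/\gamma}$ in the definition of $\kappa_1$, and one must verify that $\kappa_1$ is small enough to control \emph{both} absorbed pieces (the one from $\cK$ and the two from $\Kone,\Ktwo$) simultaneously, which is why the constant $3$ (rather than $2$) appears in the denominator of $\kappa_1$. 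Everything else is routine manipulation.
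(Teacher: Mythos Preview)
Your approach is exactly the one the paper takes: combine the hypothesis with Lemma~\ref{lma:OffDiagRevHolderIneq}, raise to the power $\gamma$, split each integral at height $\kappa\lambda$, and absorb the low parts using the smallness of $\kappa$. There is, however, a small numerical slip in your absorption step. With the bound $(a+b+c)^{\gamma}\le 3^{\gamma}(a^{\gamma}+b^{\gamma}+c^{\gamma})$ that you use, the three ``lower'' pieces total at most
\[
3^{\gamma}C_{nd}^{\gamma}\,(\kappa\lambda)^{\gamma}\nu(\cK)\Bigl(1+\tfrac{2}{\veps}\Bigr)
\;\le\;\frac{3\cdot 3^{\gamma}C_{nd}^{\gamma}}{\veps}\,(\kappa\lambda)^{\gamma}\nu(\cK)
\;\le\;\frac{3}{\veps}\cdot\frac{\veps}{2}\,\lambda^{\gamma}\nu(\cK)=\tfrac{3}{2}\,\lambda^{\gamma}\nu(\cK),
\]
which is \emph{not} strictly less than the left-hand side $\lambda^{\gamma}\nu(\cK)$, so the absorption fails as written. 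The paper avoids this by using the sharper inequality $(a+b+c)^{\gamma}\le 3^{\gamma-1}(a^{\gamma}+b^{\gamma}+c^{\gamma})$; the same computation then produces $\tfrac{1}{2}\lambda^{\gamma}\nu(\cK)$, which can be absorbed, and after rearranging the constant becomes $2\cdot 3^{\gamma-1}C_{nd}^{\gamma}\le 3^{\gamma}C_{nd}^{\gamma}$, matching the statement. Once you replace $3^{\gamma}$ by $3^{\gamma-1}$ in your power inequality, your argument is identical to the paper's.
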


\begin{proof}
Applying the inequality $(a+b+c)^r \leq 3^{r-1} (a^r + b^r + c^r)$ to the conclusion \eqref{eq:OffDiagRevHolderIneq} in Lemma \ref{lma:OffDiagRevHolderIneq} and recalling that $(p-1) \gamma = \eta$, we get
\begin{equation}\label{eq:OffDiagLevelSetEstimate1:Proof1}
\begin{split}
\frac{\lambda^{\gamma}}{3^{\gamma-1} C_{nd}^{\gamma}} 
	\leq \fint_{\cK} H^{\gamma} \, \rmd \nu + \frac{1}{\veps} \left( \frac{2^{-k}}{\dist(K_1,K_2)} \right)^{\eta(s+\veps)} \left( \fint_{\Kone} H^{\gamma} \, \rmd \nu +  \fint_{\Ktwo} H^{\gamma} \, \rmd \nu  \right)\,.
\end{split}
\end{equation}
To estimate the integrals on the right-hand side, we use \eqref{eq:KappaCondition2} to get
\begin{equation*}
\fint_E H^{\gamma} \, \rmd \nu \leq \kappa_1^{\gamma} \lambda^{\gamma} + \frac{1}{\nu(E)} \intdm{E \cap \{ H > \kappa \lambda \}}{H^{\gamma}}{\nu}
\end{equation*}
for $E \in \{ \cK, \Kone, \Ktwo \}$, and therefore
\begin{equation*}
\begin{split}
\frac{\lambda^{\gamma}}{3^{\gamma-1}C_{nd}^{\gamma}}
 &\leq \frac{3 \kappa_1^{\gamma} \lambda^{\gamma}}{\veps} + \frac{1}{\nu(\cK)} \intdm{\cK \cap \{ H > \kappa \lambda \}}{H^{\gamma}}{\nu} \\
	&\quad + \frac{1}{\veps} \left( \frac{2^{-k}}{\dist(K_1,K_2)} \right)^{\eta(s+\veps)}  \left[ \frac{1}{\nu(\Kone)} \intdm{\Kone \cap \{ H > \kappa \lambda \} }{H^{\gamma}}{\nu} + \frac{1}{\nu(\Ktwo)} \intdm{\Ktwo \cap \{ H > \kappa \lambda \} }{H^{\gamma}}{\nu} \right]\,.
\end{split}
\end{equation*}
Now the estimate \eqref{eq:OffDiagLevelSetEstimate1} follows by using \eqref{eq:KappaCondition2} in the above estimate and absorbing terms.
\end{proof}

\subsubsection{Collections of Off-Diagonal Cubes}
We now split the collection $\Hndlambda$ into collections where the average of $H^{\gamma}$ is large and where the average of $H^{\gamma}$ is small. The splitting is chosen based on the first exit-time argument.

Consider
\begin{equation}\label{eq:DefnGoodPart}
\mathscr{G}_{\lambda}^h := \left\{ \cK \in \Hndlambda \, : \, \fint_{\Kh} H^{\gamma} \, \rmd \nu \leq(10n)^{n+p} \kappa^{\gamma} \lambda^{\gamma} \right\}
\end{equation}
and
\begin{equation}\label{eq:DefnBadPart}
\mathscr{B}_{\lambda}^h := \left\{  \cK \in \Hndlambda \, : \, \fint_{\Kh} H^{\gamma} \, \rmd \nu > (10n)^{n+p} \kappa^{\gamma} \lambda^{\gamma} \right\}
\end{equation}
for $h \in \{ 1, 2 \}$ and $\kappa$ and $\gamma$ introduced in  \eqref{eq:LevelSetProof:1} and Corollary \ref{cor:RevHolderFinal} respectively. We further define 
\begin{equation*}
\GoodLambda := \mathscr{G}_{\lambda}^1 \cap \mathscr{G}_{\lambda}^2\,, \qquad \text{ and } \qquad \mathscr{B}_{\lambda} := \mathscr{B}_{\lambda}^1 \cup \mathscr{B}_{\lambda}^2\,.
\end{equation*}
We further split the set $\mathscr{B}_{\lambda}$ in order to remove cubes that are already covered by  the diagonal balls in \eqref{eq:LevelSetProof:4}-\eqref{eq:LevelSetProof:5}:
\begin{equation}\label{eq:BadSetDecomp1}
\BadLambdaD := \left\{ \cK \in \mathscr{B}_{\lambda} \, : \, \cK \subset \bigcup_j 10 \cB_j \right\}\,, \qquad \BadLambdaND := \mathscr{B}_{\lambda} \setminus \BadLambdaD\,.
\end{equation}
Thus we have the decomposition into disjoint families
\begin{equation*}
\Hndlambda = \GoodLambda \cup \BadLambdaD \cap \BadLambdaND\,.
\end{equation*}

It turns out that the family $\GoodLambda$ is ``good" in the sense that the measures of cubes in this collection are estimated by the $\mu$-measure of the level set $\{ H > \kappa \lambda \}$, where $\rmd \mu := H^{p'} \rmd \nu$. The cubes belonging to $\BadLambdaND$ are ``bad" because there is no available control of the size of $H^{p'}$ on diagonal cubes via the exit-time argument. They will instead be dealt with using combinatorial arguments, and the cutoff $(10n)^{n+p} \kappa^{\gamma} \lambda^{\gamma}$ is chosen for precisely this purpose.

\begin{lemma}[First (easier) off-diagonal estimate]\label{lma:EasyOffDiagSum}
We have
\begin{equation*}
\sum_{\cK \in \GoodLambda} \nu(\cK) \leq \frac{6^{\gamma} C_{nd}^{\gamma}}{\lambda^{\gamma}} \intdm{\cB(x_0,\alpha) \cap \{ H > \kappa \lambda\}}{H^{\gamma}}{\nu}
\end{equation*}
whenever the number $\kappa \in (0,1]$ satisfies
\begin{equation}\label{eq:KappaCondition3}
\kappa \in (0,\kappa_2]\,, \qquad \kappa_2 := \frac{\veps^{1/\gamma}}{8^{1/\gamma} 3 C_{nd} (10n)^{(n+p)/\gamma}}\,.
\end{equation}
The constant $C_{nd}$ has been defined in Lemma \ref{lma:OffDiagRevHolderIneq}.
\end{lemma}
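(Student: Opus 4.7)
My plan is to apply the off-diagonal level-set inequality \eqref{eq:OffDiagLevelSetEstimate1} to each $\cK \in \GoodLambda$, use the defining property of $\GoodLambda$ to bound the two diagonal contributions, absorb the resulting multiple of $\nu(\cK)$ via the smallness of $\kappa$, and finally sum over the pairwise disjoint cubes.

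Fix $\cK = K_1 \times K_2 \in \GoodLambda \subset \Hndlambda$. By \eqref{eq:DefnOfHndlambda} we have $\dist(K_1,K_2) \geq 2^{-k(\cK)}$, and the Calder\'on-Zygmund property \eqref{eq:CZBoundForH} yields $\bigl(\fint_{\cK} H^{p'}\,\rmd\nu\bigr)^{1/p'} > \lambda$. Since $\kappa_2 \leq \kappa_1$ (because $(10n)^{(n+p)/\gamma}\geq 1$ and $8^{1/\gamma}\geq 2^{1/\gamma}$), hypothesis \eqref{eq:KappaCondition2} is satisfied and the inequality \eqref{eq:OffDiagLevelSetEstimate1} applies. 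Moreover, since $\eta(s+\veps)>0$ and $\dist(K_1,K_2)\geq 2^{-k(\cK)}$, the factor $\bigl(2^{-k}/\dist(K_1,K_2)\bigr)^{\eta(s+\veps)}\leq 1$.

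From the defining inequality of $\GoodLambda$ in \eqref{eq:DefnGoodPart} I obtain, for $h\in\{1,2\}$,
\[
\int_{\Kh \cap \{H > \kappa\lambda\}} H^{\gamma}\,\rmd\nu \;\leq\; \int_{\Kh} H^{\gamma}\,\rmd\nu \;\leq\; (10n)^{n+p}\,\kappa^{\gamma}\lambda^{\gamma}\,\nu(\Kh).
\]
Inserting this into the bracketed diagonal portion of \eqref{eq:OffDiagLevelSetEstimate1}, the factor $\nu(\Kh)$ cancels the coefficient $\nu(\cK)/\nu(\Kh)$, so the two diagonal terms together contribute at most
\[
\frac{2\cdot 3^{\gamma} C_{nd}^{\gamma} (10n)^{n+p}\kappa^{\gamma}}{\veps}\,\nu(\cK).
\]
The choice $\kappa\leq\kappa_2$ in \eqref{eq:KappaCondition3} makes this bounded by $\tfrac14\nu(\cK)$, which I absorb into the left-hand side of \eqref{eq:OffDiagLevelSetEstimate1} to arrive at
\[
\nu(\cK) \;\leq\; \frac{(4/3)\,3^{\gamma}\,C_{nd}^{\gamma}}{\lambda^{\gamma}} \int_{\cK \cap \{H>\kappa\lambda\}} H^{\gamma}\,\rmd\nu \;\leq\; \frac{6^{\gamma}\,C_{nd}^{\gamma}}{\lambda^{\gamma}} \int_{\cK \cap \{H>\kappa\lambda\}} H^{\gamma}\,\rmd\nu,
\]
where the last step uses $\gamma \geq 1$ so that $4/3\leq 2^{\gamma}$.

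Finally, since the cubes $\{\cK\}_{\cK\in\GoodLambda}$ arise from a Calder\'on-Zygmund decomposition they are pairwise disjoint, and by \eqref{eq:LevelSetProof:CubesAreCovering} they all lie in $\cB(x_0,\alpha)$. Summing the previous display over $\cK\in\GoodLambda$ therefore produces the asserted inequality. There is no serious obstacle here beyond bookkeeping: the factor $1/\veps$ in front of the diagonal part of \eqref{eq:OffDiagLevelSetEstimate1} is precisely what forces the scaling $\kappa_2 \sim \veps^{1/\gamma}$ already encoded in \eqref{eq:KappaCondition3}, and all other constants depend only on $\texttt{data}$.
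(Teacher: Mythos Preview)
Your proof is correct and follows the paper's argument essentially verbatim: bound each diagonal term in \eqref{eq:OffDiagLevelSetEstimate1} by $\nu(\cK)/8$ using the defining property of $\GoodLambda$ together with the choice of $\kappa_2$, absorb into the left-hand side, and sum over the disjoint cubes contained in $\cB(x_0,\alpha)$. One small caveat: the claim $\gamma\geq 1$ is not guaranteed in general (it can fail for larger $p$), but from $sp<n$ one always has $\tfrac{n+\veps p}{n+sp+\veps p}>\tfrac12$ and hence $\gamma>p'/2>1/2$, which already suffices for $4/3\leq 2^{\gamma}$.
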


\begin{proof}
Since the cubes $\cK \in \GoodLambda$ are disjoint and since \eqref{eq:LevelSetProof:CubesAreCovering} holds, it suffices to show that
\begin{equation}\label{eq:GoodLambda:Proof1}
\nu(\cK) \leq \frac{6^{\gamma} C_{nd}^{\gamma}}{\lambda^{\gamma}} \intdm{\cK \cap \{ H > \kappa \lambda\}}{H^{\gamma}}{\nu}
\end{equation}
for every $\cK \in \GoodLambda$. Since $\cK \in \GoodLambda$, using \eqref{eq:KappaCondition3} we have
\begin{equation}\label{eq:GoodLambda:Proof2}
\begin{split}
\frac{3^{\gamma} C_{nd}^{\gamma}}{\veps \lambda^{\gamma}} \left( \frac{2^{-k}}{\dist(K_1,K_2)} \right)^{\eta(s+\veps)} & \frac{\nu(\cK)}{\nu(\Kh)} \intdm{\Kh \cap \{ H > \kappa \lambda \}}{H^{\gamma}}{\nu} \\
	&\leq \frac{3^{\gamma} C_{nd}^{\gamma}}{\veps \lambda^{\gamma}} \nu(\cK) \fint_{\Kh} H^{\gamma} \, \rmd \nu \leq \nu(\cK) \frac{3^{\gamma} C_{nd}^{\gamma}}{\veps \lambda^{\gamma}} (10n)^{n+p} \kappa^{\gamma} \lambda^{\gamma} \leq \frac{\nu(\cK)}{8}\,.
\end{split}
\end{equation}
Using this estimate for $h \in \{1, 2\}$ in \eqref{eq:OffDiagLevelSetEstimate1} and then absorbing terms gives \eqref{eq:GoodLambda:Proof1}.
\end{proof}

\subsubsection{Determining $\kappa$}
At this point the constant $\kappa$ introduced in \eqref{eq:LevelSetProof:1} can be completely determined. We choose 
\begin{equation}\label{eq:KappaChoice}
\kappa := \min\{ \kappa_0, \kappa_1, \kappa_2 \} \equiv \min \left\{ \frac{\veps^{1/p'}}{(2C_d)^{1/p}}\,, \frac{\veps^{1/\gamma}}{2^{1/\gamma} 3 C_{nd}}\,, \frac{\veps^{1/\gamma}}{8^{1/\gamma} 3 C_{nd} (10n)^{(n+p)/\gamma}} \right\}\,,
\end{equation}
so that conditions \eqref{eq:KappaCondition1},  \eqref{eq:KappaCondition2}, and  \eqref{eq:KappaCondition3} are satisfied. Since the constant $C_d$ defined in Theorem \ref{thm:Measure} depends only on $n$ and $p$, and since $C_{nd}$ defined in Lemma \ref{lma:OffDiagRevHolderIneq} depends only on $\texttt{data}$, we conclude that there exists $C_{\kappa} = C_{\kappa}(\texttt{data})$ such that
\begin{equation}\label{eq:KappaConditionFinal}
\kappa \geq \frac{\veps^{1/\gamma}}{C_{\kappa}}\,.
\end{equation}

\subsubsection{Summation in $\BadLambdaND$}
Dealing with the cubes from $\BadLambdaND$ requires delicate estimates and combinatorial arguments. We will first set up notation designed to describe the cubes more precisely. We will then demonstrate that the choice of cutoff $(10n)^{n+p} \kappa^{\gamma} \lambda^{\gamma}$ gives us an upper bound on the distance of a cube from the diagonal.

Define the ``problematic" projections of the ``bad" off-diagonal set
\begin{equation*}
\pi_h \mathscr{B}_{\lambda} := \{ \Kh \, : \, \cK \in \mathscr{B}_{\lambda}^h \}\,, \quad h \in \{ 1,2 \}\,.
\end{equation*}
Since all cubes belonging to the collection $\pi_1 \mathscr{B}_{\lambda} \cup \pi_2 \mathscr{B}_{\lambda}$ are dyadic cubes, it follows that a disjoint subfamily of cubes always exists. We denote this disjoint subfamily of $\pi_1 \mathscr{B}_{\lambda} \cup \pi_2 \mathscr{B}_{\lambda}$ by $\pi \mathscr{B}_{\lambda}$. By definition, all cubes of $\pi \mathscr{B}_{\lambda}$ belong to $\pi_1 \mathscr{B}_{\lambda} \cup \pi_2 \mathscr{B}_{\lambda}$ and are therefore diagonal cubes.

\begin{lemma}\label{lma:CombinatorialLemma}
Let $\cK \in \BadLambdaND$ be a cube such that $\Kh \subset \cM$ for some $\cM \in \pi \mathscr{B}_{\lambda}$ and some $h \in \{1,2\}$. Then $\dist(K_1,K_2) \geq 2^{-k(\cM)}$.
\end{lemma}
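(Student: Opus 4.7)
The plan is to argue by contradiction: suppose $\dist(K_1,K_2) < 2^{-k(\cM)}$, and deduce $\cK \subset \bigcup_j 10\cB_j$, contradicting $\cK \in \BadLambdaND$ (cf.\ \eqref{eq:BadSetDecomp1}).

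First I would carry out a geometric packaging step. Writing $\cM = M \times M$ with $M$ a cube of side $2^{-k(\cM)}$ and taking WLOG $h = 1$, one has $K_1 \subset M$. Because $K_2$ has side $2^{-k(\cK)} \leq 2^{-k(\cM)}$ and $\dist(K_1,K_2) < 2^{-k(\cM)}$, the cube $K_2$ lies within a bounded dilation of $M$. Letting $\widetilde{x}$ denote the center of $M$, this gives
\[
\cK,\,\cM \;\subset\; \cB(\widetilde{x}, R_0), \qquad R_0 := C_0\,2^{-k(\cM)},
\]
with $C_0$ a purely dimensional constant.

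Next I would exploit $\cM \in \pi\mathscr{B}_\lambda$ to show $\Psi_M(\widetilde{x}, R_0) > \kappa\lambda$. Writing $\cM = \pi_{h'}\cK^0$ for some $\cK^0 \in \mathscr{B}_\lambda^{h'}$, definition \eqref{eq:DefnBadPart} gives $\fint_\cM H^\gamma\,\rmd\nu > (10n)^{n+p}\kappa^\gamma\lambda^\gamma$, and Jensen's inequality (using $\gamma < p'$) upgrades this to $\bigl(\fint_\cM H^{p'}\,\rmd\nu\bigr)^{1/p'} > (10n)^{(n+p)/\gamma}\kappa\lambda$. A direct computation from \eqref{eq:MeasureDefn}---in the spirit of Theorem \ref{thm:Measure}, but taking advantage of the fact that $\cM$ itself contains the diagonal singularity of $\nu$, so that no $1/\veps$ factor appears---yields $\nu(\cB(\widetilde{x}, R_0))/\nu(\cM) \leq C_1 = C_1(n,p)$. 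Restricting the integration then gives
\[
\Psi_M(\widetilde{x}, R_0) \;\geq\; \left(\fint_{\cB(\widetilde{x}, R_0)} H^{p'}\,\rmd\nu\right)^{\!1/p'} \;\geq\; C_1^{-1/p'}(10n)^{(n+p)/\gamma}\kappa\lambda \;>\; \kappa\lambda,
\]
where the last strict inequality reflects that the constant $(10n)^{n+p}$ in \eqref{eq:DefnBadPart} was chosen large enough to absorb $C_1$.

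Finally, an exit-time argument closes the contradiction. By \eqref{eq:MinSideLength} one has $R_0 \leq C_0\cdot 2^{-k_0} < (\alpha-\beta)/40^n$, and (exactly as in subsection \ref{subsubsec:NearDiag}) $\widetilde{x} \in B(x_0,\beta)$, so $(\widetilde{x},\widetilde{x}) \in D_{\kappa\lambda}$. Consequently the exit time $R(\widetilde{x})$ is at least $R_0$, and the Vitali selection \eqref{eq:LevelSetProof:5} gives $\cB(\widetilde{x}, 2R(\widetilde{x})) \subset \bigcup_j 10\cB_j$; hence $\cK \subset \cB(\widetilde{x}, R_0) \subset \bigcup_j 10\cB_j$, the desired contradiction. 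The main obstacle will be the constant calibration in the Jensen step: verifying that $(10n)^{(n+p)/\gamma}$ strictly dominates $C_1^{1/p'}$, which is exactly the design principle behind the large exponent $(10n)^{n+p}$ appearing in \eqref{eq:DefnBadPart}.
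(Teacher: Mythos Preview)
Your proposal is correct and follows essentially the same contradiction argument as the paper: use the threshold in \eqref{eq:DefnBadPart} together with H\"older/Jensen and the doubling property to show that the center of $\cM$ lies in $D_{\kappa\lambda}$, then invoke the exit-time/Vitali machinery to conclude $\cK\subset\bigcup_j 10\cB_j$. The only cosmetic difference is ordering and explicitness: the paper works with the specific ball $10n\,\cB(\cM)$ (radius $5n\cdot 2^{-k(\cM)}$), so that the doubling ratio $(10n)^{n+\veps p}$ cancels exactly against the threshold $(10n)^{n+p}$, whereas you leave the geometric constant $C_0$ generic and defer the calibration you flag at the end---this calibration is precisely what the explicit choice $10n$ accomplishes.
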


\begin{proof}
First, consider $\cM \in \pi \mathscr{B}_{\lambda}$. Take the diagonal ball $\cB(\cM) \equiv \cB(x_{\cM}, 2^{-(k(\cM)+1)})$ with $x_{\cM}$ being the center of $\cM$. Thus,
\begin{equation*}
\cB(\cM) \subset \cM \subset \sqrt{n} \cB(\cM)\,.
\end{equation*}
Thus, by \eqref{eq:MeasureDoublingProperty}, H\"older's inequality, and by the definition of $\pi \mathscr{B}_{\lambda}$
\begin{equation*}
\begin{split}
(10n)^{(n+p)/\gamma} \kappa \lambda &< \left( \fint_{\cM} H^{\gamma} \, \rmd \nu \right)^{1/\gamma} \\
	&\leq \left( \frac{\nu(10n \cB(\cM))}{\nu(\cB(\cM))}\fint_{\cM} H^{\gamma} \, \rmd \nu \right)^{1/\gamma} \\
	&\leq (10n)^{(n+p)\gamma}\left( \fint_{\cM} H^{p'} \, \rmd \nu \right)^{1/p'}\,.
\end{split}
\end{equation*}
By the definition of $D_{\kappa \lambda}$ in \eqref{eq:LevelSetProof:2} we have $(x_{\cM},x_{\cM}) \in D_{\kappa \lambda}$, and then the exit-time condition \eqref{eq:LevelSetProof:4} gives $\cB(\cM) \subset \cB(x_{\cM}, R(\cM))$. We can use the exit time condition since the radius of the ball $10 n \cB(\cM)$ is smaller than $\frac{\alpha-\beta}{40^n}$, which is in turn a consequence of the fact that $k(\cM) + 1 \geq k_0$ and $k_0$ has been chosen as in \eqref{eq:MinSideLength}. Therefore, by \eqref{eq:LevelSetProof:5}
\begin{equation}\label{eq:OffDiagBadBallInc}
10n \cB(\cM) \subset \bigcup_j 10 \cB_j\,.
\end{equation}
Now, assume by contradiction that $\dist(K_1, K_2) < 2^{-k(\cM)}$. We will show that
\begin{equation}\label{eq:OffDiagBadBallInc2}
\cK \subset 10 n \cB(\cM)\,,
\end{equation}
which then contradicts the assumption $\cM \in \BadLambdaND$ by \eqref{eq:OffDiagBadBallInc}. To show \eqref{eq:OffDiagBadBallInc2} we use Proposition \ref{prop:Cubes} and that $\Kh \subset \cM$ to get
\begin{equation*}
\dist(\cK,\cM) \leq \dist(\cK,\Kh) = \dist(K_1, K_2) \leq 2^{-k(\cM)}\,.
\end{equation*}
Again using Proposition \ref{prop:Cubes} we have $k(\cK) = k(\Kh)$ and $k(\cK) \geq k(\cM)$. Therefore, since $\cM \subset \sqrt{n} \cB(\cM)$ and the radius of $\cB(\cM)$ is $2^{-(k(\cM)+1)}$, then \eqref{eq:OffDiagBadBallInc2} must hold.
\end{proof}

\begin{lemma}[Second (harder) off-diagonal estimate]\label{lma:HardOffDiagSum} 
There exists a constant $C = C(\texttt{data})$ such that the estimate
\begin{equation}\label{eq:HardOffDiagSum}
\sum_{\cK \in \BadLambdaND} \nu(\cK) \leq \frac{C}{\lambda^{\gamma}} \intdm{\cB(x_0,\alpha) \cap \{ H > \kappa \lambda \} }{H^{\gamma}}{\nu}
\end{equation}
holds, where $\kappa$ is as in \eqref{eq:KappaChoice}.
\end{lemma}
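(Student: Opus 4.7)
The plan is to start from the pointwise ``almost reverse H\"older'' estimate \eqref{eq:OffDiagLevelSetEstimate1}, which applies to every $\cK \in \BadLambdaND$ by its definition in \eqref{eq:DefnOfHndlambda} (so that $\dist(K_1,K_2) \geq 2^{-k(\cK)}$) and the choice of $\kappa$ in \eqref{eq:KappaChoice}. Sum the estimate over $\cK \in \BadLambdaND$ and split the right-hand side into a ``local'' contribution (the integral over $\cK$ itself) and two symmetric ``tail'' contributions (integrals over $\Kone$ and $\Ktwo$). The local contribution is controlled at once: the cubes $\cK \in \BadLambdaND$ are pairwise disjoint and all sit inside $\cB(x_0,\alpha)$, so
\[
\sum_{\cK \in \BadLambdaND} \frac{3^{\gamma} C_{nd}^{\gamma}}{\lambda^{\gamma}} \intdm{\cK \cap \{ H > \kappa \lambda\}}{H^{\gamma}}{\nu} \leq \frac{3^{\gamma} C_{nd}^{\gamma}}{\lambda^{\gamma}} \intdm{\cB(x_0,\alpha) \cap \{ H > \kappa \lambda\}}{H^{\gamma}}{\nu}\,.
\]

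The delicate part is the tail contribution. I will handle the $h=1$ sum; the case $h=2$ is identical. Group the cubes $\cK \in \BadLambdaND \cap \mathscr{B}_\lambda^1$ by the (unique) maximal $\cM = M \times M \in \pi\mathscr{B}_{\lambda}$ containing $\Kone$. Lemma \ref{lma:CombinatorialLemma} gives $\dist(K_1,K_2) \geq 2^{-k(\cM)}$ for every such $\cK$. Using Lemma \ref{lma:TwoDimensionalConstants} to bound $\nu(\cK)/\nu(\Kone)$ by $C_{dd}\veps (2^{-k(\cK)}/\dist(K_1,K_2))^{n-\veps p}$ absorbs the annoying $1/\veps$ factor in \eqref{eq:OffDiagLevelSetEstimate1} and leaves me with a coefficient of order
\[
\frac{C}{\lambda^{\gamma}} \left( \frac{2^{-k(\cK)}}{\dist(K_1,K_2)} \right)^{n-\veps p + \eta(s+\veps)}
\]
in front of $\int_{\Kone \cap \{H > \kappa\lambda\}} H^{\gamma}\, \rmd \nu$. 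Now fix the level $k = k(\cK) \geq k(\cM)$ and split the sum as a sum over $K_1 \subset M$ at level $k$ and then over admissible $K_2$. The integrand $\int_{\Kone \cap \{H > \kappa\lambda\}} H^{\gamma}\,\rmd\nu$ does not depend on $K_2$, so I carry out the $K_2$-sum first: for each $K_1$ the number of cubes $K_2 \in \cC_k$ with $\dist(K_1,K_2) \in [m 2^{-k}, (m+1)2^{-k})$ is $O(m^{n-1})$, and the constraint from Lemma \ref{lma:CombinatorialLemma} forces $m \geq 2^{k - k(\cM)}$, producing
\[
\sum_{K_2} \left( \frac{2^{-k}}{\dist(K_1,K_2)} \right)^{n-\veps p + \eta(s+\veps)} \leq C \sum_{m \geq 2^{k-k(\cM)}} m^{-1 - (\eta(s+\veps) - \veps p)} \leq C\, 2^{-(k-k(\cM))(\eta(s+\veps) - \veps p)}\,.
\]
The exponent $\eta(s+\veps)-\veps p$ is strictly positive; expanding $\eta$ as in \eqref{eq:DefnOfEta} gives the clean identity $\eta(s+\veps) - \veps p = nsp/(n+sp+\veps p) > 0$, which is independent of the structural assumptions on $p,q,s,t$ and is the key algebraic fact making the whole argument work.

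With the $K_2$-sum controlled, the $K_1$-sum is immediate: the cubes $\Kone = K_1 \times K_1$ with $K_1 \subset M$ are pairwise disjoint subsets of $\cM$, so
\[
\sum_{K_1 \subset M,\, K_1 \in \cC_k} \intdm{\Kone \cap \{H > \kappa\lambda\}}{H^{\gamma}}{\nu} \leq \intdm{\cM \cap \{H > \kappa\lambda\}}{H^{\gamma}}{\nu}\,.
\]
Summing the resulting geometric series $\sum_{k \geq k(\cM)} 2^{-(k-k(\cM))(\eta(s+\veps)-\veps p)}$ gives a finite constant depending only on $\texttt{data}$. Finally, since the family $\pi\mathscr{B}_{\lambda}$ is disjoint and contained in $\cB(x_0,\alpha)$,
\[
\sum_{\cM \in \pi\mathscr{B}_{\lambda}} \intdm{\cM \cap \{H > \kappa\lambda\}}{H^{\gamma}}{\nu} \leq \intdm{\cB(x_0,\alpha) \cap \{H > \kappa\lambda\}}{H^{\gamma}}{\nu}\,,
\]
producing the same bound for the tail sum as for the local sum. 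Combining the two gives \eqref{eq:HardOffDiagSum}.

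The main obstacle is the combinatorial tail estimate: one needs the coefficient $(2^{-k}/\dist(K_1,K_2))^{n-\veps p}$ coming from the measure-to-projection ratio to combine with $(2^{-k}/\dist(K_1,K_2))^{\eta(s+\veps)}$ from the almost-reverse-H\"older inequality in such a way that, after counting $O(m^{n-1})$ cubes at distance $\approx m 2^{-k}$, what remains is summable over $m$ and decaying in $k - k(\cM)$. The positivity of $\eta(s+\veps) - \veps p$ is what makes this work, and it is the only place where the specific form of $\eta$ is used in this lemma.
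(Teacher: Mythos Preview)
Your approach matches the paper's in structure: use \eqref{eq:OffDiagLevelSetEstimate1}, control the local piece by disjointness, and control the projection (tail) pieces by grouping under maximal cubes $\cM \in \pi\mathscr{B}_\lambda$, invoking Lemma~\ref{lma:CombinatorialLemma}, and carrying out a combinatorial count. Your $(k,m)$-parametrization of the count (annuli of width $2^{-k}$ at distance $m2^{-k}$, contributing $O(m^{n-1})$ cubes) is equivalent to the paper's $(i,j)$-parametrization (dyadic annuli of width $2^{j-k(\cM)}$), and your explicit identity $\eta(s+\veps)-\veps p = nsp/(n+sp+\veps p)$ is cleaner than the paper's lower bound $\eta(s+\veps)-\veps p > s/p$.

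There is, however, one step you skip that the paper makes explicit. When you write ``Group the cubes $\cK \in \BadLambdaND \cap \mathscr{B}_\lambda^1$\ldots'' you silently drop the $\pi_1\cK$-contribution for cubes $\cK \in \BadLambdaND$ with $\cK \in \mathscr{G}_\lambda^1$ (necessarily then $\cK \in \mathscr{B}_\lambda^2$). For such cubes there is no $\cM \in \pi\mathscr{B}_\lambda$ guaranteed to contain $\pi_1\cK$, so your grouping does not cover them and Lemma~\ref{lma:CombinatorialLemma} does not apply. The paper handles this in its Step~3: if $\cK \in \mathscr{G}_\lambda^h$ then the choice $\kappa \le \kappa_2$ in \eqref{eq:KappaCondition3} gives the absorption bound \eqref{eq:GoodLambda:Proof2}, so the $\pi_h\cK$-term is at most $\nu(\cK)/8$ and is moved to the left-hand side before the combinatorial sum. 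Once you insert that one sentence, your argument is complete and essentially identical to the paper's.
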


\begin{proof}
\underline{Step 1: Classification}. We classify cubes from $\BadLambdaND$ according to the location of their projections, their size, and their distance from the diagonal. Lemma \ref{lma:CombinatorialLemma}, in summary, correlates the distance of a cube from the diagonal with the location of its projection. This allows us to consider only cubes in $\pi \mathscr{B}_{\lambda}$.

We will partition $\BadLambdaND$ into suitable disjoint subfamilies. Define the collections
\begin{equation*}
\mathscr{B}_{\lambda, nd}^h := \BadLambdaND \cap \mathscr{B}_{\lambda}^h\,, \qquad h \in \{ 1, 2 \}
\end{equation*}
For every $\cM \in \pi \mathscr{B}_{\lambda}$, set
\begin{equation*}
\mathscr{B}_{\lambda, nd}^h(\cM) := \{ \cK \in \BadLambdaND \, : \, \Kh \subset \cM \}\,, \quad h \in \{ 1,2 \}\,.
\end{equation*}
Thus, we have the decomposition into disjoint subcollections
\begin{equation}\label{eq:HardOffDiagEst:Step1Decomp1}
\mathscr{B}_{\lambda, nd}^h = \bigcup_{\cM \in \pi \mathscr{B}_{\lambda}} \mathscr{B}_{\lambda, nd}^h(\cM)\,.
\end{equation}
These subcollections are disjoint in the sense that for $\cM_1$, $\cM_2 \in \pi \mathscr{B}_{\lambda}$, $\mathscr{B}_{\lambda, nd}^h(\cM_1) \cap \mathscr{B}_{\lambda, nd}^h(\cM_2) \neq \emptyset$ implies $\cM_1 = \cM_2$. This follows from the definition of $\mathscr{B}_{\lambda, nd}^h(\cM)$ and from the fact that all elements in $\pi \mathscr{B}_{\lambda}$ are pairwise disjoint.

Next we classify cubes according to their size. For each $\cK \in \mathscr{B}_{\lambda, nd}^h(\cM)$ we have $k(\cK) = k(\Kh) \geq k(\cM)$, so we can define the collections
\begin{equation*}
[\mathscr{B}_{\lambda,nd}^h(\cM)]_i := \{ \cK \in \mathscr{B}_{\lambda,nd}^h(\cM) \, : \, k(\cK) = i + k(\cM) \}\,, \qquad h \in \{1, 2\}\,, \quad i \in \bbZ_+\,.
\end{equation*}
We again have a decomposition in mutually disjoint subcollections
\begin{equation*}
\mathscr{B}_{\lambda,nd}^h(\cM) = \bigcup_{i \in \bbZ_+} [\mathscr{B}_{\lambda,nd}^h(\cM)]_i\,,
\end{equation*}
in the sense that $[\mathscr{B}_{\lambda,nd}^h(\cM)]_{i_1} \cap [\mathscr{B}_{\lambda,nd}^h(\cM)]_{i_2} \neq \emptyset$ implies that $i_1 = i_2$.

We further classify the cubes according to their distance from the diagonal. We now use the combinatorial information obtained from Lemma \ref{lma:CombinatorialLemma}. Take $\cM \in \pi \mathscr{B}_{\lambda}$. If $\cK \in \mathscr{B}_{\lambda,nd}^h(\cM)$; that is, if $\Kh \subset \cM$, then by Lemma \ref{lma:CombinatorialLemma} it follows that $\dist(K_1,K_2) \geq 2^{-k(\cM)}$. This leads us to the definition of the subcollections
\begin{equation*}
[\mathscr{B}_{\lambda,nd}^h(\cM)]_{i,j} := \{ \cK \in [\mathscr{B}_{\lambda,nd}^h(\cM)]_i \, : \, 2^{j-k(\cM)} \leq \dist(K_1,K_2) < 2^{j+1-k(\cM)} \}\,, \qquad h \in \{1,2\}\,, \quad i,j \in \bbZ_+\,,
\end{equation*}
We yet again have the decomposition in mutually disjoint subcollections
\begin{equation*}
\mathscr{B}_{\lambda,nd}^h(\cM) = \bigcup_{i,j \in \bbZ_+} [\mathscr{B}_{\lambda,nd}^h(\cM)]_{i,j}
\end{equation*}
and these are mutually disjoint in the sense that $[\mathscr{B}_{\lambda,nd}^h(\cM)]_{i_1, j_1} \cap [\mathscr{B}_{\lambda,nd}^h(\cM)]_{i_2, j_2} \neq \emptyset$ implies that $(i_1,j_1) = (i_2,j_2)$. In summary, we have the decomposition 
\begin{equation*}
\mathscr{B}_{\lambda,nd}^h = \bigcup_{\cM \in \pi \mathscr{B}_{\lambda}} \bigcup_{i,j \in \bbZ_+} [\mathscr{B}_{\lambda,nd}^h(\cM)]_{i,j}\,.
\end{equation*}
\underline{Step 2: Summation and Further Partitioning}. Fix $\cM \in \pi \mathscr{B}_{\lambda}$. Our goal in this step is to show that for $h \in \{ 1, 2 \}$
\begin{equation}\label{eq:HardOffDiagSum:Step2Estimate}
\frac{1}{\veps} \sum_{\cK \in \mathscr{B}_{\lambda,nd}^h(\cM)} \frac{\nu(\cK)}{\nu(\Kh)} \left( \frac{2^{-k}}{\dist(K_1,K_2)}\right)^{\eta(s+\veps)} \intdm{\Kh \cap \{ H > \kappa \lambda \}}{H^{\gamma}}{\nu} \leq \frac{C(\texttt{data})}{s^2} \intdm{\cM \cap \{ H > \kappa \lambda \}}{H^{\gamma}}{\nu}\,.
\end{equation}
To begin, note that for $\cK \in \mathscr{B}_{\lambda,nd}^h$ we have $\dist(K_1,K_2) \geq 2^{-k(\cK)}$, and so we can apply the inequality from Lemma \ref{lma:TwoDimensionalConstants} to get that 
\begin{equation*}
\frac{1}{\veps} \frac{\nu(\cK)}{\nu(\Kh)} \leq C_{dd} \left( \frac{2^{-k}}{\dist(K_1,K_2)}\right)^{n-\veps p}
\end{equation*}
for $h \in \{ 1, 2 \}$, and further if $\cK \in [\mathscr{B}_{\lambda, nd}^h]_{i,j}$ then
\begin{equation*}
\frac{2^{-k(\cK)}}{\dist(K_1,K_2)} = \frac{1}{2^i}\frac{2^{-k(\cM)}}{\dist(K_1,K_2)} \leq \frac{1}{2^{i+j}}\,.
\end{equation*}
With these we can estimate
\begin{equation}\label{eq:HardOffDiagSum:Step2Estimate:1}
\begin{split}
\frac{1}{\veps} &\sum_{\cK \in \mathscr{B}_{\lambda,nd}^h(\cM)} \frac{\nu(\cK)}{\nu(\Kh)} \left( \frac{2^{-k}}{\dist(K_1,K_2)}\right)^{\eta(s+\veps)} \intdm{\Kh \cap \{ H > \kappa \lambda \}}{H^{\gamma}}{\nu} \\
	&\leq C_{dd} \sum_{\cK \in \mathscr{B}_{\lambda,nd}^h(\cM)} \left( \frac{2^{-k}}{\dist(K_1,K_2)}\right)^{n+\eta(s+\veps)-\veps p} \intdm{\Kh \cap \{ H > \kappa \lambda \}}{H^{\gamma}}{\nu} \\
	&= C_{dd} \sum_{i,j = 0}^{\infty} \sum_{\cK \in [\mathscr{B}_{\lambda,nd}^h(\cM)]_{i,j}} \left( \frac{2^{-k}}{\dist(K_1,K_2)}\right)^{n+\eta(s+\veps)-\veps p} \intdm{\Kh \cap \{ H > \kappa \lambda \}}{H^{\gamma}}{\nu} \\
	&\leq C_{dd} \sum_{i,j = 0}^{\infty} \left( \frac{1}{2^{i+j}} \right)^{n+\eta(s+\veps)-\veps p}  \sum_{\cK \in [\mathscr{B}_{\lambda,nd}^h(\cM)]_{i,j}} \intdm{\Kh \cap \{ H > \kappa \lambda \}}{H^{\gamma}}{\nu}\,.
\end{split}
\end{equation}
In order to evaluate the last sum we must further partition $[ \mathscr{B}_{\lambda,nd}^h(\cM)]_{i,j} $. In order to do so, note that for each $i \in \bbZ_+$ the cube $\cM$ contains precisely $2^{2ni}$ disjoint dyadic cubes belonging to $\Delta_{i+k(\cM)}$ and precisely $2^{ni}$ disjoint dyadic diagonal cubes from $\Delta^d_{i+k(\cM)}$; see the definition \eqref{eq:DefnOfCubeCollection} and the text immediately following. Thus, $\cM$ contains at most $2^{ni}$ dyadic cubes from the class $\Delta^d_{i+k(\cM)} \cap (\pi_1 \mathscr{B}_{\lambda} \cap \pi_2 \mathscr{B}_{\lambda})$. In any case we consider all the diagonal cubes from $\cM$ in  $\Delta^d_{i+k(\cM)}$ and label them as
\begin{equation*}
\{ \widetilde{\cM} \in \Delta^d_{i+k(\cM)} \, : \, \widetilde{\cM} \subset \cM \} \equiv \{ \cM_i^m \, : \, 1 \leq m \leq 2^{ni} \}\,,
\end{equation*}
so that
\begin{equation}\label{eq:HardOffDiagSum:Key1}
\sum_{m=1}^{2^{ni}} \intdm{\cM_i^m \cap \{ H > \kappa \lambda \}}{H^{\gamma}}{\nu} \leq \intdm{\cM \cap \{ H > \kappa \lambda \}}{H^{\gamma}}{\nu}\,.
\end{equation}
For any $\cK \in [ \mathscr{B}_{\lambda,nd}^h(\cM)]_{i,j}$ with $h \in \{1,2\}$ there exists a unique diagonal cube from the collection $\Delta_k^d$, which we denote by $\cM_i^m(\cK)$, such that $\Kh = \cM_i^m(\cK)$. Then we can split $[ \mathscr{B}_{\lambda,nd}^h(\cM)]_{i,j}$ into subsets
\begin{equation*}
[ \mathscr{B}_{\lambda,nd}^h(\cM)]_{i,j,m} := \{ \cK \in [ \mathscr{B}_{\lambda,nd}^h(\cM)]_{i,j} \, : \, \Kh = \cM_i^m \}\,, \qquad m \in \{ 1, \ldots, 2^{ni} \}\,.
\end{equation*}
Since $\mathscr{B}_{\lambda, nd}^1$ is a family of dyadic cubes, if $\cK_1, \cK_2 \in [ \mathscr{B}_{\lambda,nd}^h(\cM)]_{i,j,m}$ and $\cK_1 \neq \cK_2$, then $\Ktwo_1 \cap \Ktwo_2 = \emptyset$, else the two cubes would coincide. 

Thus for each $i$, $j \geq 0$ and for $m \in \{1 , \ldots , 2^{ni} \}$ the number of cubes $\cK$ of side length $2^{i + k(\cM)}$ with $\dist(K_1, K_2) \in [2^{j-k(\cM)}, 2^{j+1-k(\cM)})$ that project into $\cM_i^m$ is bounded from above by $C(n) \left( \frac{\text{length of half-open interval defining the distance from the diagonal} }{\text{cube side length}} \right)^n$; that is,
\begin{equation}\label{eq:HardOffDiagSum:Key2}
\# [ \mathscr{B}_{\lambda,nd}^h(\cM)]_{i,j,m} \leq C(n) 2^{n(i+j)}\,, \qquad h \in \{1, 2\}\,.
\end{equation}
Thus, using \eqref{eq:HardOffDiagSum:Key1} and \eqref{eq:HardOffDiagSum:Key2} we estimate
\begin{equation*}
\begin{split}
\sum_{\cK \in  [\mathscr{B}_{\lambda,nd}^h(\cM)]_{i,j} } \int_{\Kh \cap \{H > \kappa \lambda \}} H^{\gamma} \, \rmd \nu &= \sum_{m=1}^{2^{ni}} \sum_{\cK \in  [\mathscr{B}_{\lambda,nd}^h(\cM)]_{i,j,m} } \int_{\cM^m_i \cap \{H > \kappa \lambda \}} H^{\gamma} \, \rmd \nu \\
	&\leq C(n) 2^{n(i+j)} \sum_{m=1}^{2^{ni}} \int_{\cM^m_i \cap \{H > \kappa \lambda \}} H^{\gamma} \, \rmd \nu \\
	&\leq C(n) 2^{n(i+j)} \int_{\cM \cap \{H > \kappa \lambda \}} H^{\gamma} \, \rmd \nu\,.
\end{split}
\end{equation*}
Then using the series estimate \eqref{eq:GeometricSeriesEstimate} it follows that 
\begin{equation*}
\begin{split}
\sum_{i,j=0}^{\infty} & \left( \frac{1}{2^{i+j}} \right)^{n+ \eta(s+\veps) - \veps p} \sum_{\cK \in [\mathscr{B}_{\lambda,nd}^h(\cM) ]_{i,j}} \int_{\Kh \cap \{ H > \kappa \lambda \} } H^{\gamma} \, \rmd \nu \\
	&\leq C(n) \sum_{i,j =0}^{\infty} \left( \frac{1}{2^{i+j}} \right)^{\eta(s+\veps) - \veps p} \int_{\cM \cap \{H > \kappa \lambda \}} H^{\gamma} \, \rmd \nu \\
	&\leq C(n) \left[ \frac{2^{\eta(s+\veps)-\veps p}}{\ln(2) (\eta(s+\veps)-\veps p) } \right]^2 \int_{\cM \cap \{H > \kappa \lambda \}} H^{\gamma} \, \rmd \nu \\
	&\leq  \frac{C(n,p) }{[ \eta(s+\veps)-\veps p]^2} \int_{\cM \cap \{H > \kappa \lambda \}} H^{\gamma} \, \rmd \nu
	\leq \frac{C(n,p)}{s^2} \int_{\cM \cap \{H > \kappa \lambda \}} H^{\gamma} \, \rmd \nu\,.
\end{split}
\end{equation*}
In the last line we used that $\eta > 1$ and that $\veps < \frac{s}{p}$, which gives $\eta(s+\veps)-\veps p > \frac{s}{p}$. Combining this inequality with \eqref{eq:HardOffDiagSum:Step2Estimate:1} results in \eqref{eq:HardOffDiagSum:Step2Estimate}.

\underline{Step 3: Further Summation}. Let $\cK \in \mathscr{B}_{\lambda, nd}^1$. Then, either $\cK \in \mathscr{G}_{\lambda}^2$ or $\cK \in \mathscr{B}_{\lambda}^2$ (see \eqref{eq:DefnGoodPart} and \eqref{eq:DefnBadPart}). If $\cK \in \mathscr{G}_{\lambda}^2$, then using the almost-reverse H\"older inequality \eqref{eq:OffDiagLevelSetEstimate1} along with the estimate \eqref{eq:GoodLambda:Proof2} for cubes in $\mathscr{G}_{\lambda}^2$ and reabsorbing terms,
\begin{equation*}
\nu(\cK) \leq \frac{6^{\gamma} C_{nd}^{\gamma}}{ \lambda^{\gamma}} \intdm{\cK \cap \{ H > \kappa \lambda \}}{H^{\gamma}}{\nu} 
	+ \frac{6^{\gamma} C_{nd}^{\gamma}}{\veps \lambda^{\gamma}} \left( \frac{2^{-k}}{\dist(K_1,K_2)} \right)^{\eta(s+\veps)}  \frac{\nu(\cK)}{\nu(\Kone)} \intdm{\Kone \cap \{ H > \kappa \lambda \} }{H^{\gamma}}{\nu}\,.
\end{equation*}
On the other hand, if $\cK \in \mathscr{B}_{\lambda}^2$ then we can use the almost-reverse H\"older inequality\eqref{eq:OffDiagLevelSetEstimate1} to obtain
\begin{equation*}
\begin{split}
\nu(\cK) &\leq \frac{3^{\gamma} C_{nd}^{\gamma}}{ \lambda^{\gamma}} \intdm{\cK \cap \{ H > \kappa \lambda \}}{H^{\gamma}}{\nu} \\
	&\quad + \frac{3^{\gamma} C_{nd}^{\gamma}}{\veps \lambda^{\gamma}} \left( \frac{2^{-k}}{\dist(K_1,K_2)} \right)^{\eta(s+\veps)}  \left[ \frac{\nu(\cK)}{\nu(\Kone)} \intdm{\Kone \cap \{ H > \kappa \lambda \} }{H^{\gamma}}{\nu} + \frac{\nu(\cK)}{\nu(\Ktwo)} \intdm{\Ktwo \cap \{ H > \kappa \lambda \} }{H^{\gamma}}{\nu} \right]\,.
\end{split}
\end{equation*}
A similar reasoning holds for $\cK \in \mathscr{B}_{\lambda}^2$. Summing over the cubes $\cK \in \BadLambdaND = \mathscr{B}_{\lambda,nd}^1 \cup\mathscr{B}_{\lambda,nd}^2$ gives us
\begin{equation}\label{eq:HardOffDiagSum:Step3Est}
\begin{split}
\sum_{\cK \in \BadLambdaND} &\nu(\cK) \leq \frac{6^{\gamma} C_{nd}^{\gamma}}{ \lambda^{\gamma}} \sum_{\cK \in \BadLambdaND} \intdm{\cK \cap \{ H > \kappa \lambda \}}{H^{\gamma}}{\nu} \\
	&+ \frac{6^{\gamma} C_{nd}^{\gamma}}{\veps \lambda^{\gamma}} \sum_{\cK \in \mathscr{B}_{\lambda,nd}^1} \left( \frac{2^{-k}}{\dist(K_1,K_2)} \right)^{\eta(s+\veps)}  \frac{\nu(\cK)}{\nu(\Kone)} \intdm{\Kone \cap \{ H > \kappa \lambda \} }{H^{\gamma}}{\nu} \\
	&+ \frac{6^{\gamma} C_{nd}^{\gamma}}{\veps \lambda^{\gamma}} \sum_{\cK \in \mathscr{B}_{\lambda,nd}^2} \left( \frac{2^{-k}}{\dist(K_1,K_2)} \right)^{\eta(s+\veps)}  \frac{\nu(\cK)}{\nu(\Ktwo)} \intdm{\Ktwo \cap \{ H > \kappa \lambda \} }{H^{\gamma}}{\nu} \,.
\end{split}
\end{equation}
The point of this argument beginning in Step 3 is that terms involving the projections $\Kh$ appear if and only if $\cK \in \mathscr{B}_{\lambda,nd}^h$ for $h \in \{1,2\}$. We will now argue that the last two terms in the above inequality coincide.

For a cube $\cK = K_1 \times K_2 \in \Delta$, define
\begin{equation*}
\text{Symm}(\cK) := K_2 \times K_1\,.
\end{equation*}
Then by definition
\begin{equation}\label{eq:HardOffDiagSum:Symm1}
\pi_1(\cK) = \pi_1(K_1 \times K_2) = \pi_2(K_2 \times K_1) = \pi_2(\text{Symm}(\cK))\,.
\end{equation}
Further, by symmetry of $H$ we have
\begin{equation*}
\intdm{\cK}{H^{r}}{\nu} = \intdm{\text{Symm}(\cK)}{H^{r}}{\nu}\,, \qquad r \in [1,\infty)
\end{equation*}
and
\begin{equation}\label{eq:HardOffDiagSum:Symm2}
\intdm{\cK \cap \{ H > \kappa \lambda \} }{H^{r}}{\nu} = \intdm{\text{Symm}(\cK) \cap \{ H > \kappa \lambda \} }{H^{r}}{\nu}\,, \qquad r \in [1,\infty)\,.
\end{equation}
Therefore, by definition of $\mathscr{B}_{\lambda}^h$
\begin{equation*}
\cK \in \mathscr{B}_{\lambda}^1 \quad \Leftrightarrow \quad \text{Symm}(\cK) \in \mathscr{B}_{\lambda}^2\,,
\end{equation*}
and vice versa, and so
\begin{equation}\label{eq:HardOffDiagSum:Symm3}
\cK \in \mathscr{B}_{\lambda,nd}^1 \quad \Leftrightarrow \quad \text{Symm}(\cK) \in \mathscr{B}_{\lambda, nd}^2\,,
\end{equation}
and vice versa.
It follows from \eqref{eq:HardOffDiagSum:Symm1}-\eqref{eq:HardOffDiagSum:Symm3} that the last two terms in \eqref{eq:HardOffDiagSum:Step3Est} coincide. Recalling the decomposition \eqref{eq:HardOffDiagEst:Step1Decomp1}, \eqref{eq:HardOffDiagSum:Step3Est} can be written as
\begin{equation*}
\begin{split}
\sum_{\cK \in \BadLambdaND} &\nu(\cK) \leq \frac{C}{ \lambda^{\gamma}} \sum_{\cK \in \BadLambdaND} \intdm{\cK \cap \{ H > \kappa \lambda \}}{H^{\gamma}}{\nu} \\
	&+ \frac{C}{\veps \lambda^{\gamma}} \sum_{\cM \in \pi \mathscr{B}_{\lambda} } \sum_{\cK \in \mathscr{B}_{\lambda,nd}^1 (\cM)} \left( \frac{2^{-k}}{\dist(K_1,K_2)} \right)^{\eta(s+\veps)}  \frac{\nu(\cK)}{\nu(\Kone)} \intdm{\Kone \cap \{ H > \kappa \lambda \} }{H^{\gamma}}{\nu}\,,
\end{split}
\end{equation*}
for a constant $C \equiv C(\texttt{data})$. Using the estimate \eqref{eq:HardOffDiagSum:Step2Estimate} proved in Step 2 yields
\begin{equation*}
\begin{split}
\sum_{\cK \in \BadLambdaND} &\nu(\cK) \leq \frac{C}{ \lambda^{\gamma}} \sum_{\cK \in \BadLambdaND} \intdm{\cK \cap \{ H > \kappa \lambda \}}{H^{\gamma}}{\nu} + \frac{C}{\lambda^{\gamma}} \sum_{\cM \in \pi \mathscr{B}_{\lambda} } \intdm{\cM \cap \{ H > \kappa \lambda \} }{H^{\gamma}}{\nu}\,.
\end{split}
\end{equation*}
Then since the collections $\BadLambdaND$ and $\pi \mathscr{B}_{\lambda}$ are comprised of mutually disjoint dyadic cubes all of which are contained in $\cB(x_0,\alpha)$ (see \eqref{eq:LevelSetProof:CubesAreCovering}) we can estimate
\begin{equation*}
\sum_{\cK \in \BadLambdaND} \intdm{\cK \cap \{ H > \kappa \lambda \}}{H^{\gamma}}{\nu} + \sum_{\cM \in \pi \mathscr{B}_{\lambda} } \intdm{\cM \cap \{ H > \kappa \lambda \} }{H^{\gamma}}{\nu} \leq 2 \intdm{\cB(x_0,\alpha) \cap \{ H > \kappa \lambda \} }{H^{\gamma}}{\nu}\,.
\end{equation*}
Thus \eqref{eq:HardOffDiagSum} is proved, and the proof of Lemma \ref{lma:HardOffDiagSum} is complete.
\end{proof}

\subsubsection{Off-Diagonal Conclusion} The next lemma summarizes the off-diagonal estimate obtained for $H$.

\begin{lemma}\label{lma:OffDiagonalConclusion}
The inequality
\begin{equation}\label{eq:LevelSetProof:OffDiagConclusion}
\intdm{\cB(x_0,\beta) \cap \{ H > \lambda \} }{H^{p'}}{\nu} \leq 10^{n+p} \kappa^{p'} \lambda^{p'} \sum_j \nu(\cB_j) + C \lambda^{p'-\gamma} \intdm{\cB(x_0,\alpha) \cap \{ H > \kappa \lambda \} }{H^{\gamma}}{\nu}
\end{equation}
holds for a constant $C \equiv C(\texttt{data})$, while $\kappa$ has been chosen in \eqref{eq:KappaChoice} and satisfies \eqref{eq:KappaConditionFinal}.
\end{lemma}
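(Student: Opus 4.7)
My plan is to assemble the pieces already in place rather than introduce new ideas. First, from \eqref{eq:BoundOnHOutsideOfHLambda} together with $\cB(x_0,\beta) \subset \cB(x_0,\alpha) \subset \bigcup_{\cK_0 \in \Delta_{k_0}} \cK_0$ (see \eqref{eq:LevelSetProof:CubesAreCovering}), I reduce the left-hand side of \eqref{eq:LevelSetProof:OffDiagConclusion} to
$$\sum_{\cK \in \Hlambda} \int_\cK H^{p'}\, \rmd \nu,$$
using that the cubes of $\Hlambda$ are pairwise disjoint. I then partition $\Hlambda$ into the near-diagonal part $\Hdlambda \cup \BadLambdaD$ and the genuinely off-diagonal part $\GoodLambda \cup \BadLambdaND$. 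The two terms on the right-hand side of \eqref{eq:LevelSetProof:OffDiagConclusion} will come from these two groups respectively.

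For the near-diagonal part, I use that $\Hdlambda \subset \bigcup_j 10 \cB_j$ (by Section \ref{subsubsec:NearDiag}) and $\BadLambdaD \subset \bigcup_j 10 \cB_j$ (by the definition in \eqref{eq:BadSetDecomp1}). Since the Calder\'on-Zygmund cubes are pairwise disjoint, this yields
$$\sum_{\cK \in \Hdlambda \cup \BadLambdaD} \int_\cK H^{p'}\, \rmd \nu \leq \sum_j \int_{10\cB_j} H^{p'}\, \rmd \nu,$$
at which point \eqref{eq:LevelSetProof:7} delivers the first term of \eqref{eq:LevelSetProof:OffDiagConclusion} (using $\veps p < 1 \leq p$ to pass from $10^{n+\veps p}$ to $10^{n+p}$).

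For the off-diagonal part, I combine the Calder\'on-Zygmund bound on the predecessor in \eqref{eq:CZBoundForH}, namely $\int_\cK H^{p'}\, \rmd \nu \leq \int_{\widetilde{\cK}} H^{p'}\, \rmd \nu \leq \lambda^{p'} \nu(\widetilde{\cK})$, with Lemma \ref{lma:TwoDimensionalConstants2} to swap $\nu(\widetilde{\cK})$ for $\nu(\cK)$. The hypothesis $\dist(\widetilde{K_1},\widetilde{K_2}) \geq 2^{-k(\cK)}$ required by that lemma is built directly into the definition \eqref{eq:DefnOfHndlambda} of $\Hndlambda$, so it applies to every $\cK \in \GoodLambda \cup \BadLambdaND$. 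This reduces the task to bounding $\sum \nu(\cK)$, and Lemmas \ref{lma:EasyOffDiagSum} and \ref{lma:HardOffDiagSum} give
$$\sum_{\cK \in \GoodLambda \cup \BadLambdaND} \nu(\cK) \leq \frac{C}{\lambda^\gamma} \int_{\cB(x_0,\alpha)\cap\{H > \kappa\lambda\}} H^\gamma\, \rmd \nu,$$
which, after multiplication by $\lambda^{p'}$, produces the factor $\lambda^{p'-\gamma}$ in the second term of \eqref{eq:LevelSetProof:OffDiagConclusion}.

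The argument is essentially pure assembly, and I do not anticipate an obstacle beyond consolidating constants. The one point worth emphasizing is that the conversion $\nu(\widetilde{\cK}) \leq C_{ddd}\,\nu(\cK)$ is not automatic for dyadic predecessors under the weighted measure $\nu$; it relies crucially on the off-diagonal condition, which is precisely why the near-diagonal cubes were extracted in Section \ref{subsubsec:NearDiag} and handled through the Vitali cover of the exit-time argument instead.
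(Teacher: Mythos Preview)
Your proposal is correct and follows essentially the same approach as the paper: reduce to the Calder\'on--Zygmund cubes via \eqref{eq:BoundOnHOutsideOfHLambda}, split into the near-diagonal part $\Hdlambda \cup \BadLambdaD$ (handled by \eqref{eq:DiagCubesContained}, \eqref{eq:BadSetDecomp1}, and \eqref{eq:LevelSetProof:7}) and the off-diagonal part $\GoodLambda \cup \BadLambdaND$ (handled by the predecessor bound in \eqref{eq:CZBoundForH}, Lemma~\ref{lma:TwoDimensionalConstants2}, and then Lemmas~\ref{lma:EasyOffDiagSum} and~\ref{lma:HardOffDiagSum}). Your emphasis on why the predecessor comparison $\nu(\widetilde{\cK}) \leq C_{ddd}\,\nu(\cK)$ requires the off-diagonal condition is exactly the point the paper is making implicitly.
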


\begin{proof}
We have the decomposition $\Hlambda = \Hdlambda \cup \Hndlambda$ with $\Hndlambda = \GoodLambda \cup \BadLambdaD \cup \BadLambdaND$. Recall from the relation \eqref{eq:DiagCubesContained} and from the definition \eqref{eq:BadSetDecomp1} that
\begin{equation*}
\bigcup_{\cK \in \Hdlambda} \cK \cup \bigcup_{\cK \in \BadLambdaD} \cK \subset \bigcup_{j} 10 \cB_j\,.
\end{equation*}
Thus
\begin{equation*}
\bigcup_{\cK \in \Hlambda} \subset \bigcup_{j} 10 \cB_j \cup \bigcup_{\cK \in \GoodLambda} \cK \cup \bigcup_{\cK \in \BadLambdaND} \cK \,.
\end{equation*}
Now, with this and with \eqref{eq:BoundOnHOutsideOfHLambda}, we estimate
\begin{equation*}
\intdm{\cB(x_0,\beta) \cap \{ H > \lambda \}}{H^{p'}}{\nu} \leq \sum_j \intdm{10 \cB_j \cap \{ H > \lambda \}}{H^{p'}}{\nu} + \sum_{\cK \in \GoodLambda \cup \BadLambdaND} \intdm{\cK \cap \{ H > \lambda \}}{H^{p'}}{\nu}\,.
\end{equation*}
Now, by the choice of Calder\'on-Zygmund splitting in \eqref{eq:CZSplittingForH}, for $\cK \in \GoodLambda \cup \BadLambdaND \subset \Hndlambda$
\begin{equation*}
\fint_{\cK} H^{p'} \, \rmd \nu \leq \frac{\nu(\widetilde{\cK})}{\nu(\cK)} \fint_{\widetilde{\cK}} H^{p'} \, \rmd \nu \leq \frac{\nu(\widetilde{\cK})}{\nu(\cK)} \lambda^{p'}\,.
\end{equation*}
Since $\cK \in \Hndlambda$ it follows from the definition \eqref{eq:DefnOfHndlambda} that $\dist(\widetilde{K}_1, \widetilde{K}_2) \geq 2^{-k(\cK)}$, so we can use the geometric estimate \eqref{eq:LevelSetProof:Cddd} from Lemma \ref{lma:TwoDimensionalConstants2} to obtain the bound $\frac{\nu(\widetilde{\cK})}{\nu(\cK)} \leq C_{ddd}$. In summary,
\begin{equation*}
\cK \in \GoodLambda \cup \BadLambdaND \quad \Rightarrow \quad \intdm{\cK \cap \{ H > \lambda \} }{H^{p'}}{\nu} \leq C_{ddd} \lambda^{p'} \nu(\cK)\,.
\end{equation*}
Using this inequality in conjunction with \eqref{eq:LevelSetProof:7} results in
\begin{equation*}
\intdm{\cB(x_0,\beta) \cap \{ H > \lambda \} }{H^{p'}}{\nu} \leq 10^{n+\veps p} \kappa^{p'} \lambda^{p'} \sum_j \nu(\cB_j) + C_{ddd} \lambda^{p'} \sum_{\cK \in \GoodLambda \cup \BadLambdaND} \nu(\cK)\,.
\end{equation*}
Then \eqref{eq:LevelSetProof:OffDiagConclusion} follows from the conclusions of Lemmas \ref{lma:EasyOffDiagSum} and \ref{lma:HardOffDiagSum}.
\end{proof}

\subsection{Conclusion of the Proof}\label{subsubsec:LevelSetConclusion}

We now come to the conclusion of the proof of Proposition \ref{prop:LevelSetEstimate}.
We start by combining the diagonal estimate \eqref{eq:LevelSetProof:28} with the off-diagonal estimate \eqref{eq:LevelSetProof:OffDiagConclusion}. We use the elementary estimate
\begin{equation*}
\intdm{\cB(x_0,\beta) \cap \{ H > \widetilde{\kappa} \kappa \lambda  \} }{H^{p'}}{\nu} \leq \lambda^{p'-\gamma} \intdm{\cB(x_0,\beta) \cap \{ H > \widetilde{\kappa} \kappa \lambda  \} }{H^{\gamma}}{\nu} + \intdm{\cB(x_0,\beta) \cap \{ H > \lambda  \} }{H^{p'}}{\nu}\,;
\end{equation*}
recall that $\widetilde{\kappa}, \kappa \in (0,1]$. Then \eqref{eq:LevelSetProof:28} and \eqref{eq:LevelSetProof:OffDiagConclusion}  give, after some elementary algebraic manipulations, the estimate
\begin{equation}\label{eq:Conclusion:Estimate1}
\begin{split}
\intdm{\cB(x_0,\beta) \cap \{ H > \widetilde{\kappa} \kappa \lambda \} }{H^{p'}}{\nu}
	&\leq \frac{C(\texttt{data})}{\veps^{2-2\gamma/p'} (\widetilde{\kappa} \kappa)^{p'-\gamma}  } (\widetilde{\kappa} \kappa \lambda)^{p'-\gamma} \intdm{\cB(x_0,\alpha) \cap \{ H > \widetilde{\kappa} \kappa \lambda \} }{H^{\gamma}}{\nu} \\
&\qquad + \frac{C_5 \lambda_1^{\vartheta_f}}{\wh{\kappa}^{p'} (\wh{\kappa} \kappa \lambda)^{\widetilde{\vartheta}_f-p'}} \intdm{\cB(x_0,\alpha) \cap \{ F > \wh{\kappa} \kappa \lambda \} }{F^{p_*}}{\nu}\,.
\end{split}
\end{equation}
Recall that $C_5$ is defined in \eqref{eq:LevelSetProof:27}. We can reformulate this estimate as
\begin{equation}\label{eq:Conclusion:Estimate2}
\begin{split}
\intdm{\cB(x_0,\beta) \cap \{ H > \lambda \} }{H^{p'}}{\nu}
	&\leq \frac{C}{\veps^{2-2\gamma/p'} (\widetilde{\kappa} \kappa)^{p'-\gamma}  } \lambda^{p'-\gamma} \intdm{\cB(x_0,\alpha) \cap \{ H > \lambda \} }{H^{\gamma}}{\nu} \\
&\qquad + \frac{C_6 \lambda_1^{\vartheta_f}}{\lambda^{\widetilde{\vartheta}_f-p'}} \intdm{\cB(x_0,\alpha) \cap \{ F > \wh{\kappa} \lambda / \widetilde{\kappa} \} }{F^{p_*}}{\nu}\,.
\end{split}
\end{equation}
The constants $C$ and $C_6$ satisfy the following dependencies:
\begin{equation*}
C \equiv C(\texttt{data})\,, \qquad C_6 \equiv C_6(\texttt{data},\veps)\,.
\end{equation*}
Since \eqref{eq:Conclusion:Estimate1} holds for all $\lambda \geq \lambda_2$ where $\lambda_2$ has been defined in \eqref{eq:DefnOfLambda2} we have that \eqref{eq:Conclusion:Estimate2} holds for all $\lambda \geq \widetilde{\kappa} \kappa \lambda_2$. Recall again that $\widetilde{\kappa}$, $\wh{\kappa}$, $\kappa \in (0,1]$ have been defined in \eqref{eq:LevelSetProof:19}, \eqref{eq:LevelSetProof:25} and \eqref{eq:KappaChoice} respectively.

In order to conclude with the level set estimate \eqref{eq:LevelSetEstimate} we need to estimate several constants. We need to obtain the specific dependence on $\veps$ of the constant appearing in front of the first integrand on the right-hand side. By using \eqref{eq:LevelSetProof:19} and  \eqref{eq:KappaChoice}, we see that we can choose $\widetilde{\kappa}$, $\kappa$ to satisfy
\begin{equation*}
\widetilde{\kappa} \kappa = \frac{\veps^{3/\gamma-2/p'}}{C}\,,
\end{equation*}
where $C \equiv C(\texttt{data})$. With this choice, we can estimate the constant appearing in front of the second integral in \eqref{eq:Conclusion:Estimate2} and therefore arrive at a choice of $\vartheta$ exactly as in \eqref{eq:DefnOfLevelSetConstants}.
We next set
\begin{equation*}
\kappa_f := \wh{\kappa} / \widetilde{\kappa}\,,
\end{equation*}
and making note of \eqref{eq:LevelSetProof:25} we can additionally choose $\wh{\kappa}$ small so that $\kappa_f \in (0,1)$.

Last, we need to find an upper bound on the numbers $\lambda_1$ and $\lambda_2$ defined in \eqref{eq:LevelSetProof:1} and \eqref{eq:DefnOfLambda2} respectively, so that the level set estimate \eqref{eq:LevelSetEstimate} can be verified for the range prescribed by \eqref{eq:DefnOfLambda0}. If $x \in B(x_0,\beta)$ and $\frac{\alpha-\beta}{40^n} \leq R \leq \frac{\varrho_0}{2}$, then $\cB(x,R) \subset \cB(x_0,2 \varrho_0)$. Recalling the doubling property \eqref{eq:MeasureDoublingProperty}, whenever $\widetilde{H}$ is a $\nu$-integrable function we can estimate
\begin{equation*}
\fint_{\cB(x,R)} \widetilde{H} \, \rmd \nu \leq \frac{\nu(\cB(x_0,2 \varrho_0))}{\nu(\cB(x,R))} \fint_{\cB(x_0,2 \varrho_0)} \widetilde{H} \, \rmd \nu \leq C \left( \frac{\varrho_0}{\alpha-\beta} \right)^{n+\veps p} \fint_{\cB(x_0,2 \varrho_0)} \widetilde{H} \, \rmd \nu
\end{equation*}
where $C = C(n)$. Applying this inequality to $H^{p'}$, $F^{p_*}$, $H^{\gamma}$, and $F^{p_*+\delta_f}$, as well as on different balls $2^k \cB(x,R) \subset 2^k \cB(x_0,2 \varrho_0)$, we get
\begin{equation}\label{eq:Conclusion:Estimate3}
\begin{split}
&\kappa^{-1} \big\{ \Psi_M(x,R) + \Upsilon_0(x,R) + Tail(x,R) \big\} \\
	&\qquad \leq \frac{C}{\veps^{1/\gamma}}  \left( \frac{\varrho_0}{\alpha-\beta} \right)^{n+\veps p} \big\{ \Psi_M(x_0,2 \varrho_0) + \Upsilon_0(x_0,2 \varrho_0) + Tail(x_0,2 \varrho_0) \big\} \\
	&\qquad \leq \frac{C}{\veps^{1/\gamma}}  \left( \frac{\varrho_0}{\alpha-\beta} \right)^{n+\veps p} \Theta(x_0,2 \varrho_0) \leq \frac{C}{\veps} \left( \frac{\varrho_0}{\alpha-\beta} \right)^{2n+p} \Theta(x_0,2 \varrho_0)\,,
\end{split}
\end{equation}
where $C \equiv C(\texttt{data})$. We also used \eqref{eq:KappaChoice} to remove dependence on $\kappa$, \eqref{eq:LevelSetProof:16} to remove dependence on $M$, that $\veps < 1$, and that $\frac{\varrho_0}{\alpha-\beta} \geq 2$. 
Recall also that $\Theta$ has been defined in \eqref{eq:DefnOfTheta}. Thus we have obtained the desired upper bound on $\lambda_1$. To estimate $\lambda_2$, note that for $\cK = K_1 \times K_2 \in \Delta_{k_0}$ with $k_0$ as in \eqref{eq:MinSideLength} we have $\cK \subset \cB(x_0,\alpha) \subset \cB(x_0,2 \varrho_0)$ and therefore
\begin{equation*}
\nu(\cK) \geq \frac{C}{\varrho_0^{n+\veps p}} \iintdm{K_1}{K_2}{}{y}{x} = \frac{C(\alpha-\beta)^{2n}}{\varrho_0^{n+\veps p}}\,, \qquad C \equiv C(n,p)\,.
\end{equation*}
Thus, for any cube $\cK \in \Delta_{k_0}$ we can estimate
\begin{equation}\label{eq:Conclusion:Estimate4}
\left( \fint_{\cK} H^{p'} \, \rmd \nu \right)^{1/p'} \leq \left( \frac{\nu(\cB(x_0,2 \varrho_0))}{\nu(\cK)}  \fint_{\cB(x_0,2 \varrho_0)} H^{p'} \, \rmd \nu \right)^{1/p'} \leq \frac{C}{\veps^{1/p'}} \left( \frac{\varrho_0}{\alpha-\beta} \right)^{2n/p'} \left( \fint_{\cB(x_0,2 \varrho_0)} H^{p'} \, \rmd \nu \right)^{1/p'}\,.
\end{equation}
Then using \eqref{eq:Conclusion:Estimate3} and \eqref{eq:Conclusion:Estimate4} (also using that $\veps < 1$ and $\frac{\varrho_0}{\alpha-\beta} \geq 2$) we get that
\begin{equation*}
\lambda_2 \leq \frac{C}{\veps} \left( \frac{\varrho_0}{\alpha-\beta} \right)^{2n+p} \Theta(x_0, 2 \varrho_0)\,,
\end{equation*}
where $C \equiv C(\texttt{data})$.
We therefore take $\lambda_0$ as in \eqref{eq:DefnOfLambda0} so that $\lambda_0 \geq \max \{ \lambda_1, \lambda_2 \}$. We finally arrive at \eqref{eq:LevelSetEstimate} with $\lambda$ prescribed as in \eqref{eq:DefnOfLambda0}, and the proof of Proposition \ref{prop:LevelSetEstimate} is complete.

\begin{remark}
Note that in the proof of Proposition \ref{prop:LevelSetEstimate} the off-diagonal analysis in Section \ref{subsec:OffDiag} does not make use of the assumption \eqref{eq:RevHolderFinal}. Thus, the conclusions of Lemma \ref{lma:OffDiagonalConclusion} hold for general fractional Sobolev functions $u$, and not just solutions to \eqref{eq:Intro:MainEqn}.
\end{remark}
%
%
%
%
%
%

\appendix


\begin{thebibliography}{10}

\bibitem{bass2005holder}
Richard~F Bass and Moritz Kassmann.
\newblock H{\"o}lder continuity of harmonic functions with respect to operators
  of variable order.
\newblock {\em Communications in Partial Differential Equations},
  30(8):1249--1259, 2005.

\bibitem{de2019holder}
Cristiana De~Filippis and Giampiero Palatucci.
\newblock H{\"o}lder regularity for nonlocal double phase equations.
\newblock {\em Journal of Differential Equations}, 267(1):547--586, 2019.

\bibitem{kuusi2014fractional}
Tuomo Kuusi, Giuseppe Mingione, and Yannick Sire.
\newblock A fractional {G}ehring lemma, with applications to nonlocal
  equations.
\newblock {\em Rendiconti Lincei-Matematica e Applicazioni}, 25(4):345--358,
  2014.

\bibitem{kuusi2015}
Tuomo Kuusi, Giuseppe Mingione, and Yannick Sire.
\newblock Nonlocal self-improving properties.
\newblock {\em Anal. PDE}, 8(1):57--114, 2015.

\bibitem{Mengesha-Scott-Korn-in-Domain}
Tadele Mengesha and James~M. Scott.
\newblock A fractional korn-type inequality for smooth domains and a regularity
  estimate for nonlinear nonlocal systems of equations, 2020.

\bibitem{scott2020self}
Tadele Mengesha and James~M. Scott.
\newblock Self-improving inequalities for bounded weak solutions to nonlocal
  double-phase equations.
\newblock {https://arxiv.org/abs/2011.11466}, 2020.

\bibitem{mingione2003singular}
Giuseppe Mingione.
\newblock The singular set of solutions to non-differentiable elliptic systems.
\newblock {\em Archive for rational mechanics and analysis}, 166(4):287--301,
  2003.

\bibitem{DNPV12}
Eleonora~Di Nezza, Giampiero Palatucci, and Enrico Valdinoci.
\newblock Hitchhiker's guide to the fractional {S}obolev spaces.
\newblock {\em Bull. Sci. Math.}, 136(5):521--573, 2012.

\bibitem{schikorra2016nonlinear}
Armin Schikorra.
\newblock Nonlinear commutators for the fractional p-{L}aplacian and
  applications.
\newblock {\em Mathematische Annalen}, 366(1-2):695--720, 2016.

\bibitem{Stein}
Elias~M. Stein.
\newblock {\em Singular integrals and differentiability properties of
  functions}.
\newblock Princeton Mathematical Series, No. 30. Princeton University Press,
  Princeton, N.J., 1970.

\end{thebibliography}
\end{document}